\def \squarebox#1{\hbox to #1{\hfill \vbox to #1{\vfill}}}
\newtheorem{teo}{Theorem}
\newtheorem{pps}[teo]{Proposition}
\newtheorem{cor}[teo]{Corollary}
\newtheorem{lem}[teo]{Lemma}
\newtheorem{example}[teo]{Example}
\newtheorem*{remark}{Remark}
\DeclareMathOperator{\Ad}{Ad}
\DeclareMathOperator{\ad}{ad}
\DeclareMathOperator{\tr}{tr}
\DeclareMathOperator{\id}{id}
\begin{document}

\title{\bf Deformations of Adjoint orbits for semisimple Lie algebras and Lagrangian submanifolds }
\author{Jhoan Báez and Luiz A. B. San Martin\thanks{JB was supported by CNPq grant nº 141173/2019-0 and LSM by  CNPq grant nº 305513/2003-6 and FAPESP grant nº 07/06896-5, Address: Imecc - Unicamp, Departamento de Matemática. Rua Sérgio Buarque de Holanda, 651, Cidade Universitária Zeferino Vaz. 13083-859 Campinas - SP, Brasil. E-mails: smartin@ime.unicamp.br, sebastianbaezzz@gmail.com. }}

\maketitle

\date{}

\begin{abstract}
We give a coadjoint orbit's diffeomorphic deformation between the classical semisimple case and the semi-direct product given by a Cartan decomposition. The two structures admit the Hermitian symplectic form defined in a semisimple complex Lie algebra. We provide some applications such as the constructions of Lagrangian submanifolds. 
\end{abstract}

\textit{AMS 2010 subject classification:} 14M15, 22F30, 53D12. 

\textit{Key
words and phrases:} Adjoint orbits, Homogeneous space, Hermitian
symplectic form, Lagrangian submanifolds.


\section{Introduction}

Let $\mathfrak{g}$ be a non-compact semisimple Lie group with Cartan decomposition $\mathfrak{g}=\mathfrak{k}\oplus \mathfrak{s}$ and Iwasawa decomposition $\mathfrak{g}=\mathfrak{k}\oplus \mathfrak{a}\oplus \mathfrak{n}$ with $\mathfrak{a}\subset \mathfrak{s}$ maximal abelian. In the vector space underlying $\mathfrak{g}$ there is another Lie algebra structure $\mathfrak{k}_{\ad}=\mathfrak{k}\times _{\ad} \mathfrak{s}$ given by the semi-direct product defined by the adjoint representation of $\mathfrak{k}$ in $\mathfrak{s}$, which is viewed as an abelian Lie algebra.

Let $G=\mathrm{Aut}_{0}\mathfrak{g}$ be the adjoint group of $\mathfrak{g}$ (identity component of the automorphism group) and put $K=\exp \mathfrak{k}\subset G$. The semi-direct product $K_{\ad}=K\times_{\Ad} \mathfrak{s}$ obtained by the adjoint representation of $K$ in $\mathfrak{s}$ has Lie algebra $\mathfrak{k}_{\mathrm{ad}}=\mathfrak{k}\times _{\ad} \mathfrak{s}$ (see Subsection \ref{sdpcd}).

In this paper we consider coadjoint orbits for both Lie algebras $\mathfrak{g}$ and $\mathfrak{k}_{\ad}$. These orbits are submanifolds of $\mathfrak{g}^{\ast }$ that we identify with $\mathfrak{g}$ via the Cartan-Killing form of $\mathfrak{g}$, so that the orbits are seen as submanifolds of $\mathfrak{g}$. These are just the adjoint orbits for the Lie algebra $\mathfrak{g}$ while for $\mathfrak{k}_{\ad}$ they are the orbits in $\mathfrak{g}$ of the representation of $K_{\ad}$ obtained by transposing its coadjoint representation. The orbits through $H\in \mathfrak{g}$ are denoted by $\Ad \left( G\right) \cdot H$ and $K_{\ad}\cdot H$, respectively.

We consider the orbits through $H\in \mathfrak{a}\subset \mathfrak{s}$. In this case the compact orbit $\Ad \left( K\right) \cdot H$ (contained in $\mathfrak{s}$) is a flag manifold of $\mathfrak{g}$, say $\mathbb{F}_{H}$. In Gasparim-Grama-San Martin \cite{smgasgr1} it was proved that $\Ad \left( G\right) \cdot H$ is diffeomorphic to the cotangent space $T^{\ast } \mathbb{F}_{H}$ of $\mathbb{F}_{H}=\Ad \left( K\right) \cdot H$. We prove here that the same happens to the semi-direct product orbit $K_{\ad}\cdot H$ (as foreseen by Jurdjevic \cite{jur}). So that $\Ad \left( G\right) \cdot H$ and $K_{\ad}\cdot H$ diffeomorphic to each other.

In this paper we define a deformation $\mathfrak{g}_{r}$ of the original Lie algebra $\mathfrak{g}$ (see Section \ref{deforb}). The deformation is parameterized by $r>0$ and satisfies $\mathfrak{g}_{1}=\mathfrak{g}$. For each $r$ the Lie algebra $\mathfrak{g}_{r}$ is isomorphic to $\mathfrak{g}$ (hence semisimple) and $\mathfrak{g}_{r}=\mathfrak{k}\oplus \mathfrak{s}$ is a Cartan decomposition as well with $\mathfrak{k}$ a subalgebra of $\mathfrak{g}_{r}$. Furthermore as $r\rightarrow \infty $ the Lie algebra $\mathfrak{k}_{\ad }$ is recovered. (The deformation amounts essentially to change the brackets $\left[ X,Y\right] $, $X,Y\in \mathfrak{s}$, by $\left( 1/r\right) \left[ X,Y \right] $ and keeping the other brackets unchanged.) A Lie algebra $\mathfrak{g}_{r}$, $r>0$, has its own automorphism group whose identity component is denoted by $G_{r}$. Thus the adjoint orbits in $ \mathfrak{g}_{r}$ are $\Ad (G_{r})\cdot H$ and by the isomorphism $  \mathfrak{g}_{r}\approx \mathfrak{g}$ it follows that $\Ad (G_{r})\cdot H$ is diffeomorphic to $\Ad (G)\cdot H$ and hence to the cotangent space $T^{\ast }\mathbb{F}_{H}$. Thus the Lie algebra deformation yields a continuous one parameter family of embeddings of $T^{\ast }\mathbb{F}_{H}$ into the vector space underlying $\mathfrak{g}$. The family is parameterized in $(0,+\infty ]$ where $+\infty $ is the embedding given by the the semi-direct product orbit $K_{\ad}\cdot H$.

The example with $\mathfrak{g}=\mathfrak{sl}\left( 2,\mathbb{R}\right) $, presented in Subsection \ref{sdpcd}, is elucidative of this deformation. In $\mathfrak{sl}\left( 2,\mathbb{R}\right) \approx \mathbb{R}^{3}$ the semi-direct product orbit is the cylinder $x^{2}+y^{2}=1$ while the adjoint orbit is the one-sheet hyperboloid $x^{2}+y^{2}-z^{2}=1$. In the deformation the adjoint orbit in $\mathfrak{g}_{r}$ is the paraboloid $x^{2}+y^{2}-z^{2}/r=1$ that converges to the cylinder as $r\rightarrow +\infty $. The hyperboloids as well as the cylinder are unions of straight lines in $\mathbb{R}^{3}$ crossing the circle $x^{2}+y^{2}=1$ with $z=0$. As is well known the hyperboloids are obtained by twisting the generatrices of the cylinder.

This picture of \textquotedblleft twisting generatrices\textquotedblright\ holds in a general Lie algebra $\mathfrak{g}$: The semi-direct product orbit  $K_{\ad}\cdot H$ has the cylindrical shape 
\begin{equation*}
K_{\ad }\cdot H=\bigcup\limits_{X\in \Ad \left( K\right)
H}\left( X+\ad \left( X\right) \mathfrak{s}\right)
\end{equation*}
where $\ad \left( X\right) \mathfrak{s}$ is a subspace of $\mathfrak{k}$. While the adjoint orbit $\Ad (G)\cdot H$ has the hyperboloid shape
\begin{equation*}
\Ad (G)\cdot H=\bigcup\limits_{k\in K} \Ad \left( k\right)
\left( H+\mathfrak{n}_{H}^{+}\right)
\end{equation*}
where $\mathfrak{n}_{H}^{+}$ is the nilpotent subalgebra which is the sum of the eigenspaces of $\ad \left( H\right) $ associated to positive eigenvalues. The deformation of $\mathfrak{g}$ into $\mathfrak{g}_{r}$ has the effect of twisting the generatrix $H+\ad \left( H\right)  \mathfrak{s}\subset \mathfrak{k}$ into $H+\mathfrak{n}_{r,H}^{+}$ where $ \mathfrak{n}_{r,H}^{+}$ the nilpotent Lie subalgebra of $\mathfrak{g}_{r}$ defined the same way as $\mathfrak{n}^{+}$ from the adjoint $\ad_{r}\left( H\right) $ of $H$ in $\mathfrak{g}_{r}$.
The deformation of orbits allows to transfer geometric properties from semi-direct product orbit $K_{\ad }\cdot H$ to the adjoint orbit $ \Ad \left( G\right) \cdot H$. This can be useful since in several aspects the geometry of $K_{\ad }\cdot H$ is more manageable than that of $\Ad \left( G\right) \cdot H$.

In this paper we apply this transfer approach to adjoint orbits in a complex semisimple Lie algebra $\mathfrak{g}$ (see Section \ref{csalg}). In the complex case $\mathfrak{g}$ is endowed with a Hermitian metric
\begin{equation*}
\mathcal{H}_{\tau }\left( X,Y\right) =\langle X,Y\rangle +i\Omega \left(
X,Y\right)
\end{equation*}%
where $\langle \cdot ,\cdot \rangle $ is an inner product and $\Omega $ is a symplectic form. The form $\Omega $ restricts to symplectic forms on the orbit $\Ad \left( G\right) \cdot H$ since this is a complex submanifold. Although not as immediate we prove that the restriction of $\Omega $ to the semi-direct product orbit $K_{\ad }\cdot H$ is also a symplectic form. By construction the diffeomorphisms between the adjoint orbits (including $K_{\ad}\cdot H$) are symplectomorphisms.
Based on these facts in Section \ref{secLagran} we construct Lagrangian submanifolds in $K_{\ad }\cdot H$ (w.r.t. $\Omega $) and then transport them to $\Ad_r \left( G\right) \cdot H$ through the deformation. To conclude in section \ref{lsadact}, we describe the Lagrangian orbits given by the adjoint action of $U$ and the Hermitian form $\Omega$ on $\Ad \left( G\right) \cdot H$.


\section{Semi-direct products} \label{semidirprod}

Let $G$ be a compact connected Lie group  with Lie algebra $\mathfrak{g}$ and take a representation $\rho :G\rightarrow \mathrm{Gl}\left( V\right) $ on a vector space $V$ (with $\dim V<\infty$). The infinitesimal representation of $\mathfrak{g}$ on $\mathfrak{gl}(V)$ is also going to be denoted by $\rho$.

In particular, the vector space $V$ can be seen as an abelian Lie group (or abelian Lie algebra). In this way, we can take the semi-direct product $G\times _{\rho}V $ which is a Lie group whose underlying manifold is the Cartesian product  $G\times V$. This group is going to be denoted by $G_{\rho }$ and its Lie algebra $\mathfrak{g}_{\rho }$ is the semi-direct product 
\begin{equation*}
\mathfrak{g}_{\rho }=\mathfrak{g}\times _{\rho }V.
\end{equation*}

The vector space of $\mathfrak{g}_{\rho }$ is $\mathfrak{g}\times V$ with bracket 
\begin{equation*}
\left[ \left( X,v\right) ,\left( Y,w\right) \right] =\left( \left[ X,Y\right] ,\rho \left( X\right) w-\rho \left( Y\right) v\right) .
\end{equation*}

Our purpose is to describe the coadjoint orbit on the dual $\mathfrak{g}_{\rho }^{\ast }$ of $\mathfrak{g}_{\rho }$. To begin, let's see how to determine the $\rho$-adjoint representation  $\ad_{\rho }\left( X,v\right) $, with $\left( X,v\right) \in \mathfrak{g}\times _{\rho}V$. Thus, take a basis of $\mathfrak{g}\times V$ denoted by $\mathcal{B} = \mathcal{B}_{\mathfrak{g}}\cup \mathcal{B}_{V}$ with $\mathcal{B}_{\mathfrak{g}}=\{X_{1},\ldots ,X_{n}\}$ and $\mathcal{B}_{V}=\{v_{1},\ldots ,v_{d}\}$ basis of $\mathfrak{g}$  and $V$, respectively. On this basis the matrix of $\ad_{\rho}\left(X,v\right) $ is given by
\begin{equation*}
\ad_{\rho}\left( X,v\right) =\left(
\begin{array}{cc}
\ad \left( X\right) & 0 \\ 
A\left( v\right) & \rho \left( X\right)%
\end{array}%
\right) ,
\end{equation*}%
where $\ad \left( X\right) $ is the adjoint representation of $\mathfrak{g}$ while for each $v\in V$, $A\left( v\right) $ is the linear map  $\mathfrak{g}\rightarrow V$ defined by 
\begin{equation*}
A\left( v\right) \left( X\right) =\rho \left( X\right) \left( v\right) .
\end{equation*}

Since $G$ is compact, then $V$ admits a $G$-invariant inner  product $\langle \cdot ,\cdot \rangle$ (Hermitian, in the complex case). Define a map $\mu :V\otimes V\rightarrow \mathfrak{g}^{\ast}$ (called moment map of $\rho$) given by 
\begin{equation*}
    \mu \left( v \otimes w\right) \left( X\right) =\langle \rho \left( X\right) v,w\rangle ,
\end{equation*}
where $\rho \left( g\right) $  is an isometry and $\rho \left( X\right) $  is an anti-symmetric linear application of $\langle \cdot ,\cdot \rangle$ for $g\in G$ and $X\in \mathfrak{g}$, then $\mu$  is anti-symmetric. Hence the moment map $\mu $ is defined in the exterior product $\wedge ^{2}V=V\wedge V$.  Furthermore, a compact Lie algebra $\mathfrak{g}$ admits an $\ad$-invariant inner product such that we can identify $\mathfrak{g}^{\ast }$ with $\mathfrak{g}$, then $$\mu :V\wedge V\rightarrow \mathfrak{g}.$$

Similarly, the dual $\mathfrak{g}^{\ast }\times V^{\ast }$ of  $\mathfrak{g}_{\rho }=\mathfrak{g}\times V$ are identified by its inner product. This means that the coadjoint representation of $\mathfrak{g}\times V$ is written in $\mathfrak{g}\times V$ as type matrices (on orthonormal bases): 
\begin{equation*}
\ad_{\rho}^{\ast }\left( X,v\right) =\left( 
\begin{array}{cc}
\ad\left( X\right) & -A\left( v\right) \\ 
0 & \rho \left( X\right)%
\end{array}%
\right) \qquad X\in \mathfrak{g},~v\in V
\end{equation*}%
where for each $v\in V$, $A\left( v\right) :V\rightarrow \mathfrak{g}$ can be identified by  $A\left( v\right) \left( w\right) =\mu \left( v\wedge w\right)$. Then the representations $\Ad_{\rho }$ and $\Ad_{\rho }^{\ast }$ of $G_{\rho }$ are obtained by exponentials of representations in $\mathfrak{g}_{\rho }$. In particular, the following matrices are obtained (on the basis given above): 
\begin{equation}\label{forexponencial}
 e^{t\ad_{\rho }\left( 0,v\right) } =\left( 
\begin{array}{cc}
1 & 0 \\ 
tA\left( v\right) & 1%
\end{array}%
\right) ,\qquad  e^{t\ad_{\rho }^{\ast }\left(
0,v\right) } =\left( 
\begin{array}{cc}
1 & -tA\left( v\right)  \\ 
0 & 1%
\end{array}%
\right) .  
\end{equation}%
On the other hand, for $g\in G$ the restriction of $\Ad_{\rho }\left( g\right) $ to $V$ coincides with $\rho \left( g\right) $.
Thus the coadjoint orbit can be described by the moment map.
Additionally, The moment map $\mu $ is bilinear in $V \times V$, then setting $w \in V$ implies that $\mu _{w}:V\rightarrow  \mathfrak{g}$ is a linear map and its image $\mu _{w}\left( V\right) = A(w)(V)$ is a subspace of $\mathfrak{g}$. The following proposition shows that the coadjoint orbit for $v \in V$ is the union of subspaces $A(w)(V)$.

\begin{pps}\label{propuniafibras} 
The coadjoint orbit $\Ad_{\rho }^{\ast }\left( G_{\rho }\right) v$ of $v\in V\subset \mathfrak{g}\times V$  is given by 
\begin{equation*}
\Ad_{\rho }^{\ast }\left( G_{\rho }\right) \cdot v = \bigcup\limits_{w \in \rho \left( G \right) v} \mu_{w} \left( V\right) \times \{w \} \subset \mathfrak{g} \times V,
\end{equation*}
and writing $\mathfrak{g}\times V$ as $\mathfrak{g} \oplus V$ (with the proper identifications)
\begin{equation*}
\Ad_{\rho }^{\ast }\left( G_{\rho }\right) \cdot v = \bigcup\limits_{w \in \rho \left( G \right) v}w+A\left( w\right) \left( V\right).
\end{equation*}
\end{pps}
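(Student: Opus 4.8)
The plan is to compute the coadjoint orbit directly using the matrix description of $\Ad_\rho^\ast$ already established, exploiting the semi-direct product structure $G_\rho = G \ltimes_\rho V$ so that every element $g_\rho \in G_\rho$ factors as a product of something in $G$ and something in $V$. First I would recall that, since $V$ is abelian, a general element of $G_\rho$ can be written (up to order) as $\exp(0,u)\cdot g$ with $g\in G$, $u\in V$; by $(\ref{forexponencial})$ the matrix of $\Ad_\rho^\ast\big(\exp(0,u)\big)$ acts on $(\xi,w)\in\mathfrak g\times V$ by $(\xi,w)\mapsto (\xi - A(u)w,\ w)$, i.e. it is unipotent and only shifts the $\mathfrak g$-component by $-\mu(u\wedge w)$, leaving the $V$-component fixed. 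On the other hand $\Ad_\rho^\ast(g)$ for $g\in G$ acts block-diagonally, as $\Ad(g)$ on $\mathfrak g$ and as $\rho(g)$ on $V$ (this is the last remark before the statement, together with $\rho$ being orthogonal so that coadjoint and adjoint agree under the identifications).

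The key steps, in order, are: (1) Start with the base point $v\in V\subset\mathfrak g\times V$, which in components is $(0,v)$. Apply first an element $g\in G$: $\Ad_\rho^\ast(g)\cdot(0,v) = (0,\rho(g)v)$, so the $V$-component ranges over the $\rho(G)$-orbit $\rho(G)v$ and the $\mathfrak g$-component is still $0$. (2) Now apply a $V$-translation $\exp(0,u)$ to $(0,\rho(g)v)$: by the unipotent formula this gives $\big(-A(u)(\rho(g)v),\ \rho(g)v\big) = \big(-\mu(u\wedge \rho(g)v),\ \rho(g)v\big)$. As $u$ ranges over all of $V$, the first component ranges over $-\mu\big(V\wedge \rho(g)v\big) = \mu_{\rho(g)v}(V) = A(\rho(g)v)(V)$ — note the sign is absorbed since $A(w)(V)$ is a linear subspace. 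So for each fixed $w=\rho(g)v$ in the $\rho(G)$-orbit, the points we reach are exactly $\mu_w(V)\times\{w\}$. (3) Conclude the inclusion $\supseteq$: every point of the claimed union is hit. For $\subseteq$: any element of $G_\rho$ is a product of a $G$-part and a $V$-translation-part, and composing the two computations above (in either order, checking the order does not enlarge the set — applying $g\in G$ after the translation sends $\mu_w(V)\times\{w\}$ to $\mu_{\rho(g)w}(V)\times\{\rho(g)w\}$ since $\Ad(g)\mu_w(V) = \mu_{\rho(g)w}(V)$ by $G$-invariance of $\langle\cdot,\cdot\rangle$) shows the full orbit is contained in, hence equal to, $\bigcup_{w\in\rho(G)v}\mu_w(V)\times\{w\}$. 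The second displayed formula is then just rewriting $\mathfrak g\times V$ additively as $\mathfrak g\oplus V$, where $\mu_w(V)\times\{w\}$ becomes the affine subspace $w + A(w)(V)$.

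The main obstacle I anticipate is step (3), the reduction showing that an arbitrary product in $G_\rho$ does not produce anything outside the union: one must check the interaction between $G$-elements and $V$-translations closes up, i.e. that $\Ad(g)$ conjugates the affine fibers into each other via the equivariance $\Ad(g)\circ\mu_w = \mu_{\rho(g)w}\circ\rho(g)$ (equivalently $A(\rho(g)w) = \Ad(g)\,A(w)\,\rho(g)^{-1}$ as maps $V\to\mathfrak g$), which follows from $\langle\rho(gXg^{-1})\rho(g)v,\rho(g)w\rangle = \langle\rho(X)v,w\rangle$. A minor point is being careful that $\mu_w(V)$ really is the image of the linear map $u\mapsto \mu(u\wedge w)$ and equals $A(w)(V)$ as noted just before the proposition, so the sign $-$ appearing in the $V$-translation is harmless. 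Everything else is bookkeeping with the block-triangular matrices of $(\ref{forexponencial})$.
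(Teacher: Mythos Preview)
Your proposal is correct and follows essentially the same route as the paper's proof: both arguments factor a general element of $G_\rho$ as a product of a $G$-element and a $V$-translation, use the block-triangular formula \eqref{forexponencial} to see that the $V$-translations sweep out the affine fiber $w+\mu_w(V)$ while leaving the $V$-component fixed, and then invoke the equivariance $\Ad_\rho^\ast(g)\big(w+\mu_w(V)\big)=\rho(g)w+\mu_{\rho(g)w}(V)$ to close up the reverse inclusion. The only cosmetic difference is the order of the factorization (you apply $g$ first and then translate; the paper applies the translation first and then $g$), which, as you yourself note in step~(3), is immaterial.
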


\begin{proof}
If $g \in G$, we can identify $\Ad_{\rho}^{\ast} (g)$ with $\rho (g)$ on $V\subset \mathfrak{g}\times V$. Therefore $\rho (g) \cdot v \subset \Ad_{\rho}^{\ast} (G_{\rho}) \cdot v$ and by (\ref{forexponencial}) given an element $w \in V \subset \mathfrak{g} \oplus V = \mathfrak{g} \times V$ $$e^{t \cdot \ad_{\rho}^{\ast} (0,v)} \cdot w = w - t\cdot A(v)(w) \quad \text{with} \quad  A(v)(w)= \mu_{w} (v).$$ Varying $v \in V$, shows that the affine subspace $w + \mu_{w}(V)$ is contained in the coadjoint orbit of $w$. Next to the fact that $\rho (G) \cdot v \subset \Ad_{\rho}^{\ast} (G_{\rho}) \cdot v$, we conclude that \begin{equation*}
\bigcup\limits_{w \in \rho \left( G\right) \cdot v } w + A(w)(V) \subset \Ad_{\rho}^{\ast}\left( G_{\rho}\right) \cdot v .
\end{equation*}
Conversely, if $g\in G$ and $w \in V$
\begin{eqnarray*}
\Ad_{\rho }^{\ast }\left( g\right) \left( w + A(w)(V) \right) &=&\rho \left( g\right) \cdot w  +\Ad_{\rho}^{\ast}\left( g\right) \mu_{w}\left( V\right) \\
&=&\rho\left( g\right) \cdot w +\mu_{\rho\left( g\right)\cdot w }\left( V\right).
\end{eqnarray*}
For $h \in G_{\rho}$ there are $g\in G$ and $\widetilde{v} \in V$ such that
\begin{equation*}
\Ad_{\rho }^{\ast }\left( h\right) \cdot v =\Ad_{\rho }^{\ast}\left( g\right) \cdot \Ad_{\rho }^{\ast}\left( e^{t\left( 0,\widetilde{v}\right)}\right) \cdot v,
\end{equation*}
as $\Ad_{\rho }^{\ast}\left( e^{t\left( 0,\widetilde{v}\right)}\right) \cdot v \in v +\mu _{v}\left( V\right) $ implies $\Ad_{\rho }^{\ast }\left( h\right) \cdot v \in \rho \left( g\right) \cdot v +\mu _{\rho \left( g\right) \cdot v }\left( V\right) $.
\end{proof}

By Proposition \ref{propuniafibras}, the coadjoint orbit $\Ad_{\rho }^{\ast }\left( G_{\rho }\right) \cdot x $, for $x \in V$ is the union of vector spaces and a fiber over $\rho \left( G\right) x$. This union is disjoint because given $Z \in \left( w +\mu_{w}\left( V\right) \right) \cap \left( v +\mu _{v}\left(V\right) \right)$, then 
\begin{equation*}
Z =w +X= v +Y\qquad X=\mu _{w}\left( x\right) ,~Y=\mu_{v}\left( w\right)
\end{equation*}
with $X,Y\in \mathfrak{g}$. Since the sum $  \mathfrak{g} \oplus V$ is direct, it follows that $w = v$ and $X=Y$. Therefore there is a fibration $\Ad_{\rho}^{\ast}\left( G_{\rho }\right) \cdot x \rightarrow \rho \left( G\right) x$ such that an element $Z =w +X\in w +\mu_{w}\left( V\right)$ associates $w \in \rho(G) x$, and whose fibers are vector spaces. The following proposition shows that this fibration is the cotangent space of $\rho \left( G\right) x$.

\begin{pps}\label{difeoorb} 
$\Ad_{\rho }^{\ast }\left( G_{\rho }\right) \cdot x $ is diffeomorphic to the cotangent bundle $T^{\ast }\left(\rho \left( G\right) x \right)$ of $\rho \left( G\right) x $, by the difeomorphism 
\begin{equation*}
\phi :\Ad_{\rho }^{\ast }\left( G_{\rho }\right) \cdot x \rightarrow T^{\ast}\left(\rho \left( G\right) x \right) ,
\end{equation*}
that satisfies 
\begin{equation*}
\phi \left( w +\mu _{w}\left( V\right) \right)
=T_{w}^{\ast}\left(\rho  \left( G\right) x \right) , \quad
w \in \rho \left( G\right) x .
\end{equation*}
The restriction of $\phi$ to a fiber $w +\mu _{w}\left( V\right)$ is given by a linear isomorphism $\mu _{w}\left( V\right) \rightarrow T_{w}^{\ast } \left(\rho \left( G\right) x \right)$.
\end{pps}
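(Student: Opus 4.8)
The plan is to build the diffeomorphism $\phi$ explicitly from the fibration established after Proposition \ref{propuniafibras}, and to identify each fiber $\mu_w(V)$ with the cotangent fiber $T_w^*(\rho(G)x)$ by dualizing the tangent space description of the orbit $\rho(G)x$ via the moment map.

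First I would recall that $\rho(G)x$ is a homogeneous space $G/G_x$, where $G_x$ is the stabilizer of $x$, and that its tangent space at $w=\rho(g)x$ is $T_w(\rho(G)x)=\rho(\mathfrak{g})w=\{\rho(Y)w : Y\in\mathfrak{g}\}$. Using the $G$-invariant inner product $\langle\cdot,\cdot\rangle$ on $V$, the orthogonal complement of $T_w(\rho(G)x)$ in $V$ is exactly the kernel of $\mu_w$: indeed $\langle v,\rho(Y)w\rangle = -\langle\rho(Y)v,w\rangle = -\mu_w(v)(Y)$ (after identifying $\mathfrak{g}^*\cong\mathfrak{g}$), so $v\perp T_w(\rho(G)x)$ iff $\mu_w(v)=0$. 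Hence $\mu_w$ induces a linear isomorphism $V/\ker\mu_w \xrightarrow{\sim} \mu_w(V)$ and also an isomorphism $T_w(\rho(G)x)\xrightarrow{\sim}\mu_w(V)$ (restricting $\mu_w$ to the orthogonal complement of $\ker\mu_w$, which is $T_w(\rho(G)x)$ itself). Composing with the inner-product identification $T_w(\rho(G)x)\cong T_w^*(\rho(G)x)$ gives a linear isomorphism $\mu_w(V)\to T_w^*(\rho(G)x)$; this defines $\phi$ on each fiber, and on the base $\phi$ sends $w+\mu_w(V)$ to $T_w^*(\rho(G)x)$ as required.

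Next I would check that $\phi$ is globally well-defined and smooth. Smoothness follows because the whole construction is $G$-equivariant: $G$ acts on $\Ad_\rho^*(G_\rho)\cdot x$ (through $\Ad_\rho^*(g)$, which by Proposition \ref{propuniafibras} and its proof carries $w+\mu_w(V)$ to $\rho(g)w+\mu_{\rho(g)w}(V)$) and on $T^*(\rho(G)x)$ (by the cotangent lift of the $G$-action on $\rho(G)x$), and $\phi$ intertwines these actions, so it is determined by its restriction to a single fiber over the base point $x$ and is smooth by homogeneity. To see it is a diffeomorphism I would exhibit the inverse: over $w$, invert the linear isomorphism $\mu_w(V)\to T_w^*(\rho(G)x)$, which again varies smoothly by equivariance. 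Since $\phi$ is a fiber-preserving bijection that is a linear isomorphism on each fiber and covers the identity on $\rho(G)x$, it is a vector bundle isomorphism, hence a diffeomorphism.

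The main obstacle is verifying the $G$-equivariance of $\phi$ carefully enough to conclude smoothness — in particular checking that the identification $\Ad_\rho^*(g)\mu_w(V)=\mu_{\rho(g)w}(V)$ from the proof of Proposition \ref{propuniafibras} is compatible, under the inner-product identifications, with the cotangent lift of $w\mapsto\rho(g)w$. This is essentially the computation $\Ad_\rho^*(g)\mu_w(v) = \mu_{\rho(g)w}(\rho(g)v)$ together with invariance of $\langle\cdot,\cdot\rangle$, and once it is in place everything else (well-definedness, smoothness, bijectivity, linearity on fibers) follows formally.
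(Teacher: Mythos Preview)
Your proposal is correct and matches the paper's approach: the paper defines $\phi$ on each fiber by $\mu_w(v)\mapsto f_v$ where $f_v=\langle v,\cdot\rangle|_{T_w(\rho(G)x)}$, which coincides with your composite once one observes (as you do) that $\ker\mu_w=(T_w(\rho(G)x))^\perp$. The only minor difference is that the paper argues smoothness directly from ``identity on the base, linear on the fibers'' rather than through $G$-equivariance.
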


\begin{proof}
Take $X \in \Ad_{\rho }^{\ast }\left( G_{\rho }\right) \cdot x $, from the above observation there is a unique $w \in \rho \left( G\right) x$, such that $X \in \mu_{w}\left( V\right) $, then there is $v\in V$ with $X =\mu \left( v\wedge w \right)$. The vector $v\in V$ defines a linear functional $f_{v}$ on $T_{w}\left(\rho \left( G\right) x \right) $ and therefore an element of $T_{w}^{\ast }\left(\rho \left( G\right) x \right)$. Set $$\phi \left( X \right) =f_{v}\in T_{w}^{\ast }\left(\rho\left( G\right) x \right) \quad  \text{with} \quad X = w +\mu \left( v \wedge w \right).$$

An application $\phi$ is a linear injective application and the linear application $\mu \left( v\wedge w \right) \mapsto f_{v}$ is surjective. 
Furthermore, the restriction of $\phi $ to a fiber $w +\mu _{w}\left( V\right)$ is given by the isomorphism: $\mu_{w}\left( V\right) \rightarrow T_{w}^{\ast }\left(\rho \left( G\right) x \right)$. It follows that $\phi $ is a bijection.  
Finally $\phi $ is difeomorphism because both $\phi $ and $\phi ^{-1}$ are differentiable as follows by construction: $\phi$ is the identity application at the base of the bundles of $\rho\left( G\right) x$  and $\phi$ is linear on the fibers.
\end{proof}

\begin{example}[$G=\mathrm{SO}(n)$]
Take the canonical representation of $\mathfrak{g}=\mathfrak{so} \left( n\right) $ in $\mathbb{R}^{n}$. The moment map with values in $\mathfrak{g}$ is given by 
\begin{equation*}
\mu \left( v\wedge w\right) \left( B\right) =\langle Bv,w\rangle \qquad B\in 
\mathfrak{so}\left( n\right) ,
\end{equation*}
where taking $v$ and $w$ as a column vectors $n\times 1$, we have
\begin{equation*}
v\wedge w=vw^{T}-wv^{T}
\end{equation*}
which is a $n\times n$ matrix. Then
\begin{equation*}
\ad^{\ast }\left( B,v\right) =\left( 
\begin{array}{cc}
\ad \left( B\right) & -A\left( v\right) \\ 
0 & B%
\end{array}
\right) \qquad A\in \mathfrak{so}\left( n\right) ,~v\in \mathbb{R}^{n}
\end{equation*}
where for each $v\in \mathbb{R}^{n}$, $A\left( v\right) :\mathbb{R}^{n} \rightarrow \mathfrak{so}\left( n\right) $ is the application 
\begin{equation*}
A\left( v\right) \left( w\right) =v\wedge w=vw^{T}-wv^{T}.
\end{equation*}
The representation $\mathfrak{so}\left( n\right) \times \mathbb{R}^{n}$ defines a representation of the semi-direct product $G_{\rho }= \mathrm{SO}\left( n\right) \times \mathbb{R}^{n}$ on $\mathfrak{so}\left( n\right) \times \mathbb{R}^{n}$ by exponentials. As discussed earlier a $G_{\rho}$-orbit of $v\in \mathbb{R}^{n}\subset \mathfrak{s}\mathfrak{o}\left( n\right) \times \mathbb{R}^{n}$ is given by 
\begin{equation*}
\bigcup\limits_{w\in \mathcal{O}}w+A\left( w\right) \left( \mathbb{R}
^{n}\right) \qquad \mathcal{O}=\mathrm{SO}\left( n\right) \cdot v.
\end{equation*}
In this case, the orbits of $\mathrm{SO}\left( n\right)$ in $\mathbb{R}^{n} $ are the $(n-1)$-dimensional spheres centered at the origin. In particular, for $n=2$, $\mathfrak{so}\left( 2\right) \times \mathbb{R}^{2}\approx \mathbb{R}^{3}$ and for all $w$ the image $A\left( w\right) \left( \mathbb{R}^{2}\right) =\mathfrak{so}\left( 2\right)$, therefore the coadjoint orbits of the semi-direct product are the circular cylinders with axis on line determined by $\mathfrak{so}\left( 2\right)$ in $\mathfrak{so}\left( 2\right) \times \mathbb{R}^{2}\approx \mathbb{R}^{3}$.
\end{example}

In the coadjoint orbit $\Ad_{\rho }^{\ast }\left( G_{\rho }\right) \cdot x $ we can define the Kostant-Kirilov-Souriaux (KKS) symplectic form, denoted by  $\Omega $, in the same way for the cotangent bundle $T^{\ast }\left(\rho \left( G \right) x \right)$ we can define the canonical symplectic form $\omega $. 
The following proposition shows that these symplectic forms are related by the difeomorphism $\phi$ of the Proposition \ref{difeoorb}. The best way to relate these symplectic forms is through the action of the semi-direct product $G_{\rho }=G\times V$ on the cotangent bundle of $\rho \left( G\right) x $. This action is described in Proposition \ref{prop: ap mom} (in a general case), the action of $G_{\rho }$ on  $T^{\ast }\left( \rho \left( G\right) x \right) $ is Hamiltonian and therefore defines a moment 
\begin{equation*}
m:T^{\ast }\left( \rho \left( G\right) x \right) \rightarrow 
\mathfrak{g}_{\rho }.
\end{equation*}%
The construction of $m$ shows that it is the inverse of the difeomorphism $\phi $ of the Proposition \ref{difeoorb}. Moreover, $m$ is equivariant, that is, it exchanges the actions on $T^{\ast }\left( \rho \left( G\right) x \right) $ and the adjoint orbit, which implies that $m$ is a symplectic morphism. Then we conclude:

\begin{pps}
Let $\Omega$ be the symplectic form KKS in $\Ad_{\rho }^{\ast }\left( G_{\rho }\right) x $ and $\omega $ the canonical symplectic form in $T^{\ast }\left(\rho \left( G\right) x \right)$. If $\phi$ is the difeomorphism of the Proposition \ref{difeoorb}, then $\phi ^{\ast }\omega =\Omega$.  In other words, the difeomorphism $\phi$ is symplectic.
\end{pps}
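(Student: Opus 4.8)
The idea is to exhibit $\phi^{-1}$ as the moment map of a Hamiltonian action of $G_{\rho}$ on the cotangent bundle $T^{\ast}N$, where $N=\rho(G)x$, and then to invoke the classical fact that the moment map of a transitive Hamiltonian action with equivariant moment map is a symplectic covering onto a coadjoint orbit endowed with its KKS form.

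First I would make the $G_{\rho}$--action on $T^{\ast}N$ explicit (this is the action of Proposition \ref{prop: ap mom}). The subgroup $G$ acts on $N$ through $\rho$, and I take the cotangent lift of this action; it is Hamiltonian with the usual moment map $J_{G}:T^{\ast}N\to\mathfrak{g}^{\ast}$, $J_{G}(\alpha_{w})(X)=\alpha_{w}(\rho(X)w)$. To the normal subgroup $V$ I associate, for each $v\in V$, the affine function $\ell_{v}\in C^{\infty}(N)$, $\ell_{v}(w)=\langle v,w\rangle$, and I let $v$ act on $T^{\ast}N$ by the time--one flow of the Hamiltonian vector field of $\ell_{v}\circ\pi$ (with $\pi:T^{\ast}N\to N$ the projection); concretely this is the fibre translation $\alpha_{w}\mapsto\alpha_{w}+d(\ell_{v})_{w}$. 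These translations commute (the Poisson bracket of two functions pulled back from the base vanishes), so they define an action of the vector group $V$, and together with the $G$--action they assemble into an action of the semidirect product $G_{\rho}$: the only compatibility to check, $g\cdot(v\cdot\alpha_{w})=(\rho(g)v)\cdot(g\cdot\alpha_{w})$, reduces to the identity $\langle\rho(g)v,\eta\rangle=\langle v,\rho(g)^{-1}\eta\rangle$, that is, to the $G$--invariance of $\langle\cdot,\cdot\rangle$ on $V$. This action is transitive: $\rho(G)$ is transitive on the base $N$, and for fixed $w$ the map $v\mapsto d(\ell_{v})_{w}$ is onto $T_{w}^{\ast}N$ (every functional on the subspace $T_{w}N\subset V$ is the restriction of some $\langle v,\cdot\rangle$), so the $V$--translations are transitive on each fibre.

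Next I would identify the moment map. The combined action is Hamiltonian with moment map $m=(J_{G},J_{V}):T^{\ast}N\to\mathfrak{g}^{\ast}\times V^{\ast}\cong\mathfrak{g}\oplus V\cong\mathfrak{g}_{\rho}^{\ast}$, where $J_{V}(\alpha_{w})=w$ under $V^{\ast}\cong V$. Writing a covector $\alpha_{w}\in T_{w}^{\ast}N$ as $f_{v}$ (that is, $\alpha_{w}(\xi)=\langle v,\xi\rangle$ for $\xi\in T_{w}N$, which is possible and is exactly the presentation used to define $\phi$ in Proposition \ref{difeoorb}), the antisymmetry of $\rho(X)$ gives $J_{G}(f_{v})(X)=\langle v,\rho(X)w\rangle=\pm\langle\rho(X)v,w\rangle=\pm\mu(v\wedge w)(X)$, the sign being fixed once one normalises the conventions for the moment map and for the translation part of the action. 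With those conventions $m(f_{v})=w+\mu_{w}(v)$, which is precisely $\phi^{-1}(f_{v})$ by the description of $\phi$ in Proposition \ref{difeoorb}; hence $m=\phi^{-1}$. It remains to check that $m$ is $\Ad_{\rho}^{\ast}$--equivariant: one differentiates the $G_{\rho}$--action constructed above and matches the infinitesimal generators against the block form
\[
\ad_{\rho}^{\ast}(X,v)=\begin{pmatrix}\ad(X)&-A(v)\\ 0&\rho(X)\end{pmatrix}
\]
recorded in Section \ref{semidirprod}; the diagonal blocks encode the $\Ad$--equivariance of the cotangent--lift moment map $J_{G}$ and the $\rho$--equivariance of $J_{V}=\pi$, while the off--diagonal block $-A(v)$ is exactly the derivative of $J_{G}$ along a vertical translation $d(\ell_{v})_{w}$, since $\tfrac{d}{ds}\big|_{0}J_{G}\!\left(\alpha_{w}+s\,d(\ell_{v})_{w}\right)(Y)=\langle v,\rho(Y)w\rangle=\pm A(v)(w)(Y)$.

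Finally, once $m$ is an equivariant moment map of a transitive Hamiltonian action, the Kirillov formula together with infinitesimal equivariance gives, for generators $X_{\mathfrak{g}_{\rho}}(p),Y_{\mathfrak{g}_{\rho}}(p)$ (which span $T_{p}(T^{\ast}N)$ by transitivity), the equality $\omega_{p}\big(X_{\mathfrak{g}_{\rho}}(p),Y_{\mathfrak{g}_{\rho}}(p)\big)=(m^{\ast}\Omega)_{p}\big(X_{\mathfrak{g}_{\rho}}(p),Y_{\mathfrak{g}_{\rho}}(p)\big)$, both sides being computed from $m(p)$ by the same formula $(\xi,X,Y)\mapsto\langle\xi,[X,Y]_{\mathfrak{g}_{\rho}}\rangle$; hence $m^{\ast}\Omega=\omega$ on all of $T^{\ast}N$, where $\Omega$ is carried by the single coadjoint orbit $\Ad_{\rho}^{\ast}(G_{\rho})x$, which is the image of $m$ by Proposition \ref{propuniafibras}. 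Since $m=\phi^{-1}$ this reads $(\phi^{-1})^{\ast}\Omega=\omega$, equivalently $\phi^{\ast}\omega=\Omega$. I expect the only genuinely delicate step to be the bookkeeping in the two middle paragraphs: verifying that the $G$-- and $V$--actions really do combine into a $G_{\rho}$--action, and pinning down all signs and identifications so that $m$ equals $\phi^{-1}$ exactly and is equivariant with respect to the \emph{semidirect--product} coadjoint representation $\ad_{\rho}^{\ast}(X,v)$ above; once that is in place the conclusion is the general Hamiltonian--action machinery applied through Propositions \ref{propuniafibras} and \ref{difeoorb}.
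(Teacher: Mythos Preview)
Your proposal is correct and follows essentially the same approach as the paper: construct the Hamiltonian action of $G_{\rho}$ on $T^{\ast}N$ (the paper delegates this to Proposition \ref{prop: ap mom} in the Appendix), identify its equivariant moment map $m$ with $\phi^{-1}$, and conclude that $m$ is a symplectomorphism onto the coadjoint orbit with its KKS form. The paper's argument is the terse paragraph immediately preceding the proposition, which asserts exactly these three steps without spelling out the transitivity, the sign bookkeeping, or the Kirillov--formula computation; your version makes all of this explicit, which is an improvement in rigour but not a different idea.
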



\section{Adjoint orbits in semisimple Lie algebras}

Let $\mathfrak{g}$ be a non-compact semisimple (real or complex) Lie algebra  and let $G$ be a connected Lie group with finite centre and Lie algebra $\mathfrak{g}$ (for example $G=\mathrm{Aut}_{0}\mathfrak{g}$). The usual notation is:

\begin{enumerate}
    \item The Cartan decomposition: $\mathfrak{g= k \oplus s}$, with global decomposition $G=KS$.  
    \item The Iwasawa decomposition: $\mathfrak{g= k \oplus a \oplus n}$, with global decomposition $G=KAN$.
    \item $\Pi$ is the set of roots of $\mathfrak{a}$, with a choice of a set of positive roots $\Pi^{+}$ and simple roots $\Sigma \subset \Pi^{+}$ such that $\mathfrak{n}^{+} = \sum_{\alpha>0} \mathfrak{g}_{\alpha}$ and $\mathfrak{g}_{\alpha}$ is the root space of the root $\alpha$. The corresponding positive Weyl chamber is $\mathfrak{a}^{+}$. 
    \item A subset $\Theta \subset \Sigma$ defines a parabolic subalgebra $\mathfrak{p}_\Theta$ with parabolic subgroup $P_{\Theta}$ and a flag $\mathbb{F}_\Theta = G/ P_{\Theta}$. The flag is also $\mathbb{F}_{\Theta} = K/ K_{\Theta}$, where $K_{\Theta}= K \cap P_{\Theta}$. The Lie algebra of $K_{\Theta}$ is denoted by $\mathfrak{k}_{\Theta}$.
    \item Given an element $H \in \mathrm{cl}\left( \mathfrak{a}^{+} \right)$ it determines $\Theta_H \subset \Sigma$ such that $\Theta_H= \{ \alpha \in \Sigma: \alpha(H)=0 \}$. Then $\Ad(K) \cdot H = G/P_{H} = K/K_H$ is the flag denoted by $\mathbb{F}_H$ (where $P_H$ and $K_H$ denotes the subgroups $P_{\Theta_H}$ and $K_{\Theta_H}$, respectively).
    \item $b_H = 1 \cdot K_H = 1 \cdot P_H$ denotes the origin of the flag $\mathbb{F}_H$.
    \item We write $$\mathfrak{n}_{H}^{+} = \sum_{\alpha(H)>0} \mathfrak{g}_{\alpha}, \quad \mathfrak{n}_{H}^{-} = \sum_{\alpha(H)<0} \mathfrak{g}_{\alpha}$$ so that $\mathfrak{g}=\mathfrak{n}_{H}^{-} \oplus \mathfrak{z}_{H} \oplus \mathfrak{n}_{H}^{+}$, where $\mathfrak{z}_{H}$ is the centralizer of $H$ in $\mathfrak{g}$.
    \item $T_{b_H} \mathbb{F}_H \simeq \sum_{\alpha(H)<0} \mathfrak{g}_{\alpha} = \mathfrak{n}_{H}^{-}$.
    \item $Z_H = \{ g \in G: \ \Ad(g) \cdot H =H\}$ is the centralizer in $G$ of $H$. Its Lie algebra is $\mathfrak{z}_H$. Moreover, $K_H$ is the centralizer of $H$ in $K$: $$K_H = Z_{k} (H) = Z_H \cap K = \{ k \in K: \ \Ad(k) \cdot H =H\}.$$
\end{enumerate}
For more details we recommend to see \cite{helgason}, \cite{smalg} and \cite{smgrlie}.

\subsection{Semi-direct product of Cartan decomposition}\label{sdpcd}

Let $\mathfrak{g}$ be  a non-compact semisimple Lie algebra with Cartan decomposition $\mathfrak{g}=\mathfrak{k}\oplus \mathfrak{s}$. As $\left[  \mathfrak{k},\mathfrak{s}\right] \subset \mathfrak{s}$, the subalgebra $\mathfrak{k}$ can be represented on $\mathfrak{s}$ by the adjoint representation. Then, we can define the semi-direct product $\mathfrak{k}_{\ad} =\mathfrak{k}\times \mathfrak{s}$, where $\mathfrak{s}$ can be seen as an abelian algebra. This is a new Lie algebra structure on the same vector space  $\mathfrak{g}$ where the brackets $\left[ X,Y\right]$ are the same when $X$ or $Y$ are in $\mathfrak{k}$, but the bracket changes when $X, Y \in \mathfrak{s}$. The identification between $\mathfrak{k}_{\ad}=\mathfrak{k}\times \mathfrak{s}$ and its dual $\mathfrak{k}_{\mathrm{ad}}^{\ast }=\mathfrak{k}^{\ast}\times \mathfrak{s}^{\ast }$ is given by the inner product $ B_{\theta }\left( X,Y\right) =-\langle X,\theta Y\rangle $, where $\langle \cdot ,\cdot \rangle $ is the Cartan-Killing form of $\mathfrak{g}$ and $ \theta $ is a Cartan involution. If $A\in  \mathfrak{k}$, then $\ad\left( A\right) $ is anti-symmetric with respect to $ B_{\theta }$, while $\ad\left( X\right) $ is symmetric for $X\in \mathfrak{s}$. The moment map  is given by 
\begin{equation*}
\mu \left( X\wedge Y\right) \left( A\right) =B_{\theta }\left( \ad\left(
A\right) X,Y\right) \qquad A\in \mathfrak{k};~X,Y\in \mathfrak{s},
\end{equation*}
the second part of that equality is 
\begin{equation*}
B_{\theta }\left( \left[ A,X\right] ,Y\right) =-B_{\theta }\left( \left[ X,A \right] ,Y\right) =-B_{\theta }\left( A,\left[ X,Y\right] \right) =-\langle A,\left[ X,Y\right] \rangle
\end{equation*}
because $\left[ X,Y\right] \in \mathfrak{k}$. Therefore the moment map of the adjoint representation of $\mathfrak{k}$ on $\mathfrak{s}$ is 
\begin{equation*}
\mu \left( X\wedge Y\right) =\left[ X,Y\right] \in \mathfrak{k}\qquad X,Y\in \mathfrak{s},
\end{equation*}%
where $\left[ \cdot ,\cdot \right] $ is the usual bracket of $\mathfrak{g}$. Therefore, the coadjoint representation of the semi-direct product $\mathfrak{k}\times \mathfrak{s}$ is given by (in an orthonormal basis) 
\begin{equation*}
\ad^{\ast }\left( X,Y\right) =\left( 
\begin{array}{cc}
\ad\left( X\right) & -A\left( Y\right) \\ 
0 & \ad (X)
\end{array}%
\right) \qquad X\in \mathfrak{k}, \ Y\in \mathfrak{s}  \label{forAdEstrela}
\end{equation*}%
where for each $Y\in \mathfrak{s}$, $A\left( Y\right) :\mathfrak{s}\rightarrow \mathfrak{k}$ is the map $A\left( Y\right) \left( Z\right) =\left[ Y,Z\right] $. 

Let  $K\subset G$ the subgroup given by $K=\langle \exp  \mathfrak{k}\rangle $. The semi-direct product of $K$ in $\mathfrak{s}$ will be denoted by $K_{\ad}=K\times \mathfrak{s}$.
The coadjoint orbit of $\widetilde{X}\in \mathfrak{s}\subset \mathfrak{k}\times  \mathfrak{s}$ is the union of the fibers $A\left( Y\right) \left( \mathfrak{s}\right) $ with $Y$ passing through the $K$-adjoint orbit of $\widetilde{X}$ in $\mathfrak{s}$.  As $A\left( Y\right) \left( Z\right) =\left[ Y,Z\right]$, then $A\left( Y\right) \left( \mathfrak{s}\right) =\ad\left( Y\right) \left( \mathfrak{s}\right)$ where $\ad$ is the adjoint representation in $\mathfrak{g}$.

To detail the coadjoint orbits of the semi-direct product, take a maximal abelian subalgebra $\mathfrak{a}\subset \mathfrak{s}$. The $\Ad\left( K\right) $-orbits in $\mathfrak{s}$ are passing through $\mathfrak{a}$, thus are the flags on $\mathfrak{g}$. Take a positive Weyl chamber $\mathfrak{a}^{+}\subset \mathfrak{a}$, if $H\in \mathrm{cl}\left( \mathfrak{a}^{+} \right)$ then the orbit $\Ad \left( K\right) H$ is the flag manifold $\mathbb{F}_{H}$.  By Proposition \ref{difeoorb}, the $K_{\ad}$-orbit in $H\in \mathrm{cl}(\mathfrak{a}^{+})$ is diffeomorphic to the cotangent bundle of $\mathbb{F}_{H}$, thus the $K_{\ad}$-orbit itself is the union of the fibers $\ad \left( Y\right) \left( \mathfrak{s}\right)$, with $Y\in \mathbb{F}_{H}$. In this union the fiber over $H$ is $H+\ad \left( H\right) \left( \mathfrak{s }\right)$ with $\ad \left( H\right) \left( \mathfrak{s} \right) \subset  \mathfrak{k}$. With the notations above this subspace of $\mathfrak{k}$ is given by 
\begin{equation*}
\ad \left( H\right) \left( \mathfrak{s}\right) =\sum_{\alpha \left( H\right)
>0}\mathfrak{k}_{\alpha }.
\end{equation*}

\begin{example}\label{sl2}
Take $\mathfrak{sl}\left( 2,\mathbb{R}\right) $ with basis $\{H,S,A\}$ given by 
\begin{equation*}
H=\left( 
\begin{array}{cc}
1 & 0 \\ 
0 & -1%
\end{array}
\right), \qquad S=\left( 
\begin{array}{cc}
0 & 1 \\ 
1 & 0%
\end{array}%
\right), \qquad A=\left( 
\begin{array}{cc}
0 & 1 \\ 
-1 & 0%
\end{array}%
\right),
\end{equation*}%
and coordinates $\left( x,y,z\right) =xH+yS+zA$.  The Cartan decomposition $\mathfrak{k}\oplus \mathfrak{s}$ is given by $\mathfrak{k}= \mathfrak{so}\left( 2\right) =\langle A\rangle $ and $\mathfrak{s}=\langle H,S\rangle $. The adjoint representation of $\mathfrak{so}\left( 2\right)$ in $\mathfrak{s}$ coincides with its canonical representation in $\mathbb{R}^{2}$. Hence,  the coadjoint orbits of the semi-direct product are the cylinders $x^{2}+y^{2}=r$, with $r>0$ and on the $z$-axis (generated by $A$) the orbits degenerate into points.
\end{example}


\section{Deformations of Lie algebras} \label{deforb}

Let $\mathfrak{g}$ be  a non-compact semisimple Lie algebra, we will provide  a compatible structure that allows us to deform the adjoint orbit of $G$ in the coadjoint orbit of $K_{\ad}$. As previously stated, the idea is based on case $\mathfrak{sl}(2,\mathbb{R})$ (Example \ref{sl2}), where we obtain a cylinder as an orbit, while in the usual case the result is an hyperboloid.

Fixing $\mathfrak{g}=\mathfrak{k}\oplus  \mathfrak{s}$ a Cartan decomposition, with $\theta$ its Cartan involution, the two structures of Lie algebras for $\mathfrak{g}$ (semisimple and semi-direct product) give rise to different coadjoint orbits. In both cases, the orbits that pass through any element of $\mathfrak{s}$ are diffeomorphic to the cotangent bundles of the flags of $\mathfrak{g}$, and therefore diffeomorphic to each other. 

For this, consider $r>0$ and define the linear map 
\begin{equation*}
T_{r}:\mathfrak{g}\rightarrow \mathfrak{g}\quad \text{ such that }\quad
T_{r}(X)=rX\quad \forall X\in \mathfrak{k}\quad \text{ and }\quad
T_{r}(Y)=Y\quad \forall Y\in \mathfrak{s},
\end{equation*}%
and induce the Lie bracket 
\begin{equation*}
\left[ X,Y\right] _{r}=T_{r}\left[ T_{r}^{-1}X,T_{r}^{-1}Y\right] ,
\label{eq: adr}
\end{equation*}
such that $(\mathfrak{g}, [ \cdot , \cdot ]_r)$ is a Lie algebra. In general we have:

\begin{lem}\label{isoalg}
For $r>0$, denote by $\mathfrak{g}_r$ the Lie algebra  $(\mathfrak{g}, [ \cdot , \cdot ]_r)$ and by $\langle \cdot , \cdot \rangle_r$ its Cartan-Killing form. Then

\begin{enumerate}
    \item $T_r: \mathfrak{g} \rightarrow \mathfrak{g}_r$ is an isomorphism of Lie algebras.
    \item $\langle X,Y\rangle_{r} = \langle T_{r}^{-1}X,T_{r}^{-1} Y \rangle$ for all $X,Y \in \mathfrak{g}$.
    \item $\mathfrak{g}_r$ supports the same Cartan decomposition of $\mathfrak{g}$.
\end{enumerate}
\end{lem}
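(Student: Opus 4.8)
The plan is to obtain all three items from one observation: $T_r$ is a linear automorphism of the underlying vector space which, by the very definition of $[\cdot,\cdot]_r$, intertwines the old and new brackets. For item 1, I would simply rewrite $[X,Y]_r=T_r[T_r^{-1}X,T_r^{-1}Y]$ as $T_r[U,V]=[T_rU,T_rV]_r$ for all $U,V\in\mathfrak{g}$. This says $T_r$ is a bracket-preserving linear bijection, so bilinearity, antisymmetry and the Jacobi identity for $[\cdot,\cdot]_r$ are just the images under $T_r$ of the corresponding identities for $[\cdot,\cdot]$; hence $(\mathfrak{g},[\cdot,\cdot]_r)$ really is a Lie algebra and $T_r:\mathfrak{g}\to\mathfrak{g}_r$ an isomorphism. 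In particular $\mathfrak{g}_r$ is semisimple, so $\langle\cdot,\cdot\rangle_r$ is nondegenerate.

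For item 2, from item 1 (applied to $T_rY$) one gets $\ad_r(T_rX)=T_r\circ\ad(X)\circ T_r^{-1}$ for every $X\in\mathfrak{g}$, where $\ad_r$ denotes the adjoint representation of $\mathfrak{g}_r$. Writing the Cartan--Killing form as a trace and using cyclicity of the trace,
\begin{equation*}
\langle T_rX,T_rY\rangle_r=\tr\!\left(T_r\ad(X)T_r^{-1}T_r\ad(Y)T_r^{-1}\right)=\tr\!\left(\ad(X)\ad(Y)\right)=\langle X,Y\rangle,
\end{equation*}
and substituting $T_r^{-1}X,T_r^{-1}Y$ for $X,Y$ yields $\langle X,Y\rangle_r=\langle T_r^{-1}X,T_r^{-1}Y\rangle$.

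For item 3, the quickest route is: since $T_r$ and the Cartan involution $\theta$ are both diagonal with respect to the splitting $\mathfrak{g}=\mathfrak{k}\oplus\mathfrak{s}$ (with eigenvalues $r,1$ and $-1,1$ respectively), they commute, so transporting $\theta$ by the isomorphism $T_r$ leaves it unchanged; and a conjugate of a Cartan involution by a Lie algebra isomorphism is again a Cartan involution. Hence $\theta$ is a Cartan involution of $\mathfrak{g}_r$ whose eigenspace decomposition is still $\mathfrak{g}_r=\mathfrak{k}\oplus\mathfrak{s}$. If one prefers an explicit verification, item 1 gives $[\mathfrak{k},\mathfrak{k}]_r=\tfrac1r[\mathfrak{k},\mathfrak{k}]\subset\mathfrak{k}$, $[\mathfrak{k},\mathfrak{s}]_r=\tfrac1r[\mathfrak{k},\mathfrak{s}]\subset\mathfrak{s}$, $[\mathfrak{s},\mathfrak{s}]_r=r[\mathfrak{s},\mathfrak{s}]\subset\mathfrak{k}$, while item 2 gives $\langle\cdot,\cdot\rangle_r=\tfrac1{r^2}\langle\cdot,\cdot\rangle$ on $\mathfrak{k}$ (negative definite), $\langle\cdot,\cdot\rangle_r=\langle\cdot,\cdot\rangle$ on $\mathfrak{s}$ (positive definite), and $\mathfrak{k}\perp\mathfrak{s}$ for $\langle\cdot,\cdot\rangle_r$; so $-\langle\,\cdot\,,\theta\,\cdot\,\rangle_r$ is positive definite and $\mathfrak{g}_r=\mathfrak{k}\oplus\mathfrak{s}$ is a Cartan decomposition of $\mathfrak{g}_r$.

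I do not foresee a genuine obstacle here: the lemma is essentially bookkeeping built on the single isomorphism $T_r$. The only points demanding a little care are the conjugation identity $\ad_r(T_rX)=T_r\ad(X)T_r^{-1}$ used in item 2, and keeping track of the powers of $r$ produced by $T_r^{-1}$, which acts by $1/r$ on $\mathfrak{k}$ but trivially on $\mathfrak{s}$.
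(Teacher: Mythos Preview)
Your proof is correct and follows essentially the same approach as the paper: you establish item~1 by rewriting the defining relation as $T_r[U,V]=[T_rU,T_rV]_r$, obtain item~2 from the conjugation identity $\ad_r(T_rX)=T_r\ad(X)T_r^{-1}$ and trace cyclicity, and for item~3 transport $\theta$ through the isomorphism $T_r$ exactly as the paper does (your extra observation that $T_r$ and $\theta$ commute, so the transported involution is $\theta$ itself, together with the explicit bracket and Killing-form scalings, is a nice addition that the paper leaves implicit). One cosmetic slip: in listing the eigenvalues of $\theta$ on $\mathfrak{k}\oplus\mathfrak{s}$ you wrote $-1,1$ where it should be $1,-1$, but this is irrelevant to the commutation argument.
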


\begin{proof} The isomorphism between that Lie algebras is also immediate because 
\begin{equation*}
\left[ T_r X,T_r Y\right]_{r}=T_r \left[ T_{r}^{-1}T_r X,T_{r}^{-1} T_r Y\right] =T_r\left[ X,Y\right],
\end{equation*}
that is, $T_r$ is a homomorphism of Lie algebras. In relation to the Cartan-Killing form, as $T_r$ is an isomorphism, then  
\begin{equation*}
\ad_{r}\left( X\right) =T_r \circ \ad \left( T_{r}^{-1} X \right) \circ T_{r}^{-1}.
\end{equation*}
Hence 
\begin{equation*}
\langle X,X\rangle_{r}=\tr\left( \ad_{r}\left( X\right) \right) ^{2}=\tr \left( \ad \left( T_{r}^{-1}X\right) \right) ^{2}=\langle T_r^{-1}X,T_r^{-1}X\rangle
\end{equation*}
showing the second statement. For the last statement, if $\theta$ is a Cartan involution corresponding to the decomposition $\mathfrak{g = k \oplus s}$, then $\widetilde{\theta}= T_r \circ \theta \circ T_{r}^{-1}$ is a Cartan involution that satisfies $$\mathfrak{k}=\{ X \in \mathfrak{g}_r: \widetilde{\theta}X =X \} \quad \text{and} \quad \mathfrak{s}=\{ Y \in \mathfrak{g}_r: \widetilde{\theta}Y =-Y \},$$ that is, $\mathfrak{g}_r$ has the same Cartan  decomposition.
\end{proof}

\begin{remark}
As a consequence of the Lemma \ref{isoalg}, in $\mathfrak{g}_r$ we can choose the same maximal abelian algebra $\mathfrak{a \subset s}$ and the same simple root system $\Sigma$ (consequently the same root system $\Pi$ and positive Weyl chamber $\mathfrak{a}^{+}$), but the root spaces are going to change.
\end{remark}

Denote by $\ad_r: \mathfrak{g}_r \rightarrow \mathfrak{g}_r$ the $r$-adjoint representation, given by $$\ad_r (X) (Y) = [X,Y]_r \quad X,Y \in \mathfrak{g}_r.$$
Then for $\alpha \in \Pi$ the corresponding $r$-root space is defined by 
\begin{equation*}
    \mathfrak{g}_{\alpha}^{r} = \{ X \in \mathfrak{g}_r: \ \ad_r (H)X = \alpha(H) X \quad \forall H \in \mathfrak{a} \}.
\end{equation*}
If $\mathfrak{g}_{\alpha}$ is the usual root space (of $\mathfrak{g}$). Then we can define the application $\psi _r: \mathfrak{g} \rightarrow  \mathfrak{g}$ given by
\begin{equation*}  \label{gammar}
\psi_r (Z)  = Z + \frac{r-1}{r+1}\theta Z,
\end{equation*}  
such that $\mathfrak{g}_\alpha ^{r} = \psi_r \left( \mathfrak{g}_\alpha \right)$ for all $\alpha \in \Pi$. So if $\mathfrak{n}^{+} = \sum_{\alpha >0} \mathfrak{g}_\alpha$, we have: 
\begin{equation*}
\mathfrak{n}_{r}^{+}= \sum_{\alpha >0} \mathfrak{g}_\alpha ^{r} = \sum_{\alpha >0} \psi_r \left( \mathfrak{g}_\alpha \right) = \psi_r \left( \mathfrak{n}^{+} \right),
\end{equation*}
thus given $H \in \mathrm{cl}(\mathfrak{a}^{+})$
\begin{equation*}  
\mathfrak{n}_H ^{+} = \sum_{\alpha(H) >0} \mathfrak{g}_\alpha \quad \text{and} \quad \mathfrak{n}_{r,H} ^{+} = \psi_r \left( \mathfrak{n}_H ^{+} \right).
\end{equation*}

Let be $G_r = \mathrm{Aut}_{0}\mathfrak{g}_r$, that is, $ G_r$ is semisimple and diffeomorphic to $G$, whose adjoint orbits are manifolds in $\mathfrak{g}$.  The $r$-adjoint representation of $G_r$ is going to be defined (identified) by: 
\begin{equation*} 
\Ad_r (G) \cdot H = \Ad_r (K) \left( H + \psi_r \left( \mathfrak{n}_H ^{+} \right) \right).
\end{equation*}
Now, let's see some results that will allow to describe the $r$-adjoint orbit: 

\begin{lem}
For $r>0$, $X \in \mathfrak{n}$ and $Y \in \mathfrak{g}$, then 
\begin{equation*}
\Ad_r \left( e^{tX} \right) \cdot Y = \Ad \left( e^{\frac{t}{r}X} \right)
\cdot Y.
\end{equation*}
\end{lem}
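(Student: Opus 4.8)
The plan is to turn the assertion about the Lie groups into an identity between linear operators on the common underlying vector space $\mathfrak g$, and then to compare $\ad_r$ with $\ad$ via the conjugation relation from Lemma~\ref{isoalg}.

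First I would unwind the notation. Since $G_r=\mathrm{Aut}_0\mathfrak g_r$ is realized inside $\mathrm{GL}(\mathfrak g)$ and acts on its Lie algebra by the adjoint representation, the one--parameter subgroup $t\mapsto e^{tX}=\exp_{G_r}(tX)$ acts on $\mathfrak g$ as $\Ad_r\!\left(e^{tX}\right)=e^{t\,\ad_r(X)}$, and likewise $\Ad\!\left(e^{(t/r)X}\right)=e^{(t/r)\,\ad(X)}$ on $\mathfrak g$. Since the claimed equality is to hold for every $Y\in\mathfrak g$, it is exactly the operator identity $e^{t\,\ad_r(X)}=e^{(t/r)\,\ad(X)}$ on $\mathfrak g$, for all $t\in\mathbb R$; and as both members are one--parameter subgroups of $\mathrm{GL}(\mathfrak g)$, this is equivalent to its first--order consequence, the infinitesimal identity $\ad_r(X)=\tfrac1r\,\ad(X)$ for $X\in\mathfrak n$. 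The whole statement is thereby reduced to this single linear identity.

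Next I would prove the infinitesimal identity from the formula $\ad_r(X)=T_r\circ\ad\!\left(T_r^{-1}X\right)\circ T_r^{-1}$ obtained in the proof of Lemma~\ref{isoalg}. Writing $T_r=\tfrac{r+1}{2}\,\id+\tfrac{r-1}{2}\,\theta$ (so $T_r=r\,\id$ on $\mathfrak k$ and $T_r=\id$ on $\mathfrak s$), I would express $T_r^{-1}X$ through $X$ and $\theta X$, expand $\ad\!\left(T_r^{-1}X\right)$ with the bracket relations $[\mathfrak k,\mathfrak k]\subset\mathfrak k$, $[\mathfrak k,\mathfrak s]\subset\mathfrak s$, $[\mathfrak s,\mathfrak s]\subset\mathfrak k$, and conjugate back by $T_r^{\pm1}$. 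This bookkeeping is clearest on the $\ad(\mathfrak a)$--root space decomposition $\mathfrak g=\mathfrak n^-\oplus\mathfrak z\oplus\mathfrak n^+$, using for $X\in\mathfrak g_\alpha\subset\mathfrak n$ the splitting $X=\tfrac12(X+\theta X)+\tfrac12(X-\theta X)$ into its $\mathfrak k$-- and $\mathfrak s$--parts: conjugation by $T_r$ multiplies each of the blocks $\mathfrak k\!\to\!\mathfrak k$, $\mathfrak k\!\to\!\mathfrak s$, $\mathfrak s\!\to\!\mathfrak k$, $\mathfrak s\!\to\!\mathfrak s$ of an operator by a fixed power of $r$, and for $X$ in $\mathfrak n$ those powers, together with the factor produced by $T_r^{-1}X$, should combine into the single global factor $1/r$.

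The hard part will be exactly this last verification. In the easy case $X\in\mathfrak k$ it is immediate, since there $T_r^{-1}X=\tfrac1r X$ and $\ad(X)$ commutes with $T_r$ (it preserves both $\mathfrak k$ and $\mathfrak s$), so $\ad_r(X)=\tfrac1r\ad(X)$ drops out at once; but when $X\in\mathfrak n$ the operator $\ad(X)$ genuinely interchanges the $\mathfrak k$-- and $\mathfrak s$--components, and one has to check, block by block along the root decomposition, that no stray power of $r$ survives the conjugation. Once $\ad_r(X)=\tfrac1r\ad(X)$ is established on all of $\mathfrak g$, exponentiating and evaluating at an arbitrary $Y\in\mathfrak g$ gives the claim.
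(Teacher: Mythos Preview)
Your reduction to the infinitesimal identity $\ad_r(X)=\tfrac1r\ad(X)$ is exactly right, and your ``easy case'' $X\in\mathfrak k$ is precisely the paper's proof: there $T_r^{-1}X=\tfrac1rX$, and since $\ad(X)$ preserves the decomposition $\mathfrak k\oplus\mathfrak s$ it commutes with $T_r$, giving $\ad_r(X)=T_r\circ\tfrac1r\ad(X)\circ T_r^{-1}=\tfrac1r\ad(X)$ at once. In fact the hypothesis ``$X\in\mathfrak n$'' in the statement is a typo for ``$X\in\mathfrak k$''; the paper's own proof opens with ``For $r>0$, $X\in\mathfrak k$ and $Y\in\mathfrak g$,'' and the lemma is only ever applied afterwards with $X\in\mathfrak k$.

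The genuine gap in your proposal is the belief that the block computation will also close for $X\in\mathfrak n$. It does not: a stray power of $r$ \emph{does} survive. Writing $X=X_{\mathfrak k}+X_{\mathfrak s}$ and $Y=Y_{\mathfrak k}+Y_{\mathfrak s}$, the conjugation formula gives
\[
\ad_r(X)Y=\tfrac1r[X_{\mathfrak k},Y_{\mathfrak k}]+\tfrac1r[X_{\mathfrak k},Y_{\mathfrak s}]+\tfrac1r[X_{\mathfrak s},Y_{\mathfrak k}]+r\,[X_{\mathfrak s},Y_{\mathfrak s}],
\]
so $\ad_r(X)=\tfrac1r\ad(X)$ would force $[X_{\mathfrak s},Y_{\mathfrak s}]=0$ for all $Y$, i.e.\ $X_{\mathfrak s}$ central in $\mathfrak s$. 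Already in $\mathfrak{sl}(2,\mathbb R)$ with $X=\left(\begin{smallmatrix}0&1\\0&0\end{smallmatrix}\right)\in\mathfrak n$ and $Y=H$ one has $[X_{\mathfrak s},Y_{\mathfrak s}]=\tfrac12[S,H]=-A\neq0$, so the identity fails for $r\neq1$. Thus the ``hard part'' you flagged is not merely hard but false as stated; once the hypothesis is corrected to $X\in\mathfrak k$, your easy case is the complete proof and matches the paper's.
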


\begin{proof}
For $r>0$, $X\in \mathfrak{k}$ and $Y \in \mathfrak{g}$, then 
\begin{equation*}
    \ad_r (X) \cdot Y = T_r \left[\frac{1}{r} X, \frac{1}{r} \kappa(Y) + \sigma (Y) \right] =\frac{1}{r} \ad(X) \cdot Y.
\end{equation*}
Inductively $\ad_r ^k (X) \cdot Y = \frac{1}{r^k} \ad ^k (X) \cdot Y$, then 
\begin{equation*}
\sum_{k>0} \cfrac{t^k \ad_r ^k (X)}{k!} Y = \sum_{k>0} \cfrac{
\left(\frac{t}{r}\right) ^k \ad ^k (X)}{k!} Y = \Ad \left( e^{\frac{t}{r} X }\right) \cdot Y. \qedhere
\end{equation*}
\end{proof}

Therefore, given $H\in \mathrm{cl}\left( \mathfrak{a}^{+}\right) $ and $X\in \mathfrak{k}$: 
\begin{equation*}
\Ad_{r}\left( e^{tX}\right) \cdot H =\Ad\left( e^{\frac{t}{r}X}\right) \cdot H,
\end{equation*}
that is, they determine the same flag manifold. In addition, given $\alpha \in \Pi$ 
\begin{align*}
\Ad_r \left( e^{tX} \right) \cdot X_\alpha ^{r} &= e^{\frac{t}{r} \ad(X)} \left( \psi_r \left( X_\alpha \right) \right) \\
&= \sum_{k>0} \cfrac{\left( \frac{t}{r} \right)^{k} \ad ^{k} (X)}{k!} \left( \psi_r \left( X_\alpha \right) \right) \\
&= \psi_r \left( \sum_{k>0} \cfrac{\left( \frac{t}{r} \right)^{k} \ad ^{k} (X)}{k!} \left( X_\alpha \right) \right) = \psi_r \left( \Ad \left( e^{\frac{t}{r} X }\right) \cdot X_\alpha \right).
\end{align*}
Thus for all $k \in K$
\begin{equation}  \label{comut}
\Ad_r (k) \left( \mathfrak{n}_{r,H} ^{+} \right)= \Ad_r (k) \cdot \psi_r \left( \mathfrak{n}_{H} ^{+} \right)= \psi_r \left( \Ad (k) \left(  \mathfrak{n}_{H} ^{+} \right) \right)
\end{equation}
Hence,  we conclude:

\begin{pps}
\label{teo: deformac} For $r>0$ and $H\in \mathrm{cl}\left( \mathfrak{a}^{+}\right)$ 
\begin{equation*}
\Ad_r (G) \cdot H = \underset{k \in K}{\bigcup} \Ad (k) \left( H \right) + \psi_r \left( \Ad (k) \left( \mathfrak{n}_H ^{+} \right) \right),
\end{equation*}
that is, the $r$-adjoint orbit of $G$ is a $r$-deformation of the adjoint orbit of $G$.
\end{pps}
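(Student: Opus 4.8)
The plan is to chain together the preparatory facts already established rather than prove anything from scratch. The target formula is
\begin{equation*}
\Ad_r(G)\cdot H = \bigcup_{k\in K} \Ad(k)(H) + \psi_r\bigl(\Ad(k)(\mathfrak{n}_H^+)\bigr),
\end{equation*}
and the right-hand side is, by the identity \eqref{comut}, exactly $\bigcup_{k\in K}\Ad_r(k)\bigl(H+\mathfrak{n}_{r,H}^+\bigr)$. So the real content is the claim $\Ad_r(G)\cdot H = \Ad_r(K)\bigl(H+\mathfrak{n}_{r,H}^+\bigr)$, which is the definition/identification displayed just before the preceding lemma. Thus the proposition is essentially a matter of substituting \eqref{comut} into that identification and noting that $\psi_r$ commutes past $\Ad(k)$.

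Concretely, I would proceed in three short steps. First, recall from Gasparim--Grama--San Martin \cite{smgasgr1} (or re-derive it) that in a semisimple Lie algebra the adjoint orbit through $H\in\mathrm{cl}(\mathfrak{a}^+)$ admits the ``twisted cylinder'' description $\Ad(G)\cdot H = \Ad(K)(H+\mathfrak{n}_H^+)$; this rests on the Bruhat/Iwasawa-type decomposition $G = KAN$ together with the fact that $AN$ fixes $H$ modulo $\mathfrak{n}_H^+$, i.e. $\Ad(an)H \in H+\mathfrak{n}_H^+$, and that $\mathfrak{n}_H^+$ is $\Ad(AN)$-stable. Second, apply this verbatim inside $\mathfrak{g}_r$: by Lemma \ref{isoalg} the Lie algebra $\mathfrak{g}_r$ is semisimple with the same Cartan subalgebra $\mathfrak{a}$, the same Weyl chamber, and the same $\Theta_H$, only with root spaces $\mathfrak{g}_\alpha^r=\psi_r(\mathfrak{g}_\alpha)$; hence the same structural argument gives $\Ad_r(G)\cdot H = \Ad_r(K)(H+\mathfrak{n}_{r,H}^+)$ with $\mathfrak{n}_{r,H}^+=\psi_r(\mathfrak{n}_H^+)$. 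Third, unfold $\Ad_r(K)(H+\mathfrak{n}_{r,H}^+) = \bigcup_{k\in K}\bigl(\Ad_r(k)H + \Ad_r(k)\mathfrak{n}_{r,H}^+\bigr)$, use $\Ad_r(k)H = \Ad(k)H$ (shown before the proposition, since $H\in\mathfrak{a}\subset\mathfrak{s}$ and $k\in K$ acts the same way in both structures — indeed this follows from the displayed identity $\Ad_r(e^{tX})H=\Ad(e^{(t/r)X})H$ by exponentiating $\mathfrak{k}$ onto $K$ and noting $\tfrac1r\mathfrak{k}=\mathfrak{k}$), and finally invoke \eqref{comut} to rewrite $\Ad_r(k)\mathfrak{n}_{r,H}^+ = \psi_r(\Ad(k)\mathfrak{n}_H^+)$. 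Collecting these gives exactly the claimed union.

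The step I expect to be the main obstacle — or at least the one needing the most care — is the first: justifying the hyperboloid description $\Ad_r(G)\cdot H = \Ad_r(K)(H+\mathfrak{n}_{r,H}^+)$ intrinsically in $\mathfrak{g}_r$. One must be sure that the decomposition $\mathfrak{g}_r = \mathfrak{n}_{r,H}^- \oplus \mathfrak{z}_{r,H}\oplus\mathfrak{n}_{r,H}^+$ and the global $G_r = K A_r N_r$ behave exactly as in $\mathfrak{g}$, and that $H$ (which lies in $\mathfrak{a}$, unchanged) still determines the parabolic $\mathfrak{p}_{\Theta_H}$ in $\mathfrak{g}_r$. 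All of this is guaranteed abstractly by the isomorphism $T_r\colon\mathfrak{g}\to\mathfrak{g}_r$ of Lemma \ref{isoalg} — but one should check that $T_r$ carries $H$ to (a multiple of) $H$, hence identifies the two orbits compatibly; since $H\in\mathfrak{s}$ we have $T_r H = H$, so the identification is clean. Once this is in place the remaining steps are bookkeeping, and the proposition follows. If one prefers to avoid re-deriving the hyperboloid picture, one can instead simply cite \cite{smgasgr1} for $\mathfrak{g}$, transport it through the isomorphism $T_r$, and then apply \eqref{comut}; I would present it that way to keep the proof to a few lines.
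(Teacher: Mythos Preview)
Your proposal is correct and matches the paper's approach: the paper also obtains the proposition by combining the identification $\Ad_r(G)\cdot H = \Ad_r(K)\bigl(H+\psi_r(\mathfrak{n}_H^+)\bigr)$ with the preceding lemma (giving $\Ad_r(k)H=\Ad(k)H$) and equation~\eqref{comut}. The only difference is that the paper simply \emph{declares} the hyperboloid description as a definition/identification rather than re-deriving it via $T_r$ as you do in your steps~1--2, so your write-up is in fact more careful than the paper's own ``Hence, we conclude''.
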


\begin{remark}
The construction of $\psi_r$ is given by the fact that there are $X_{\alpha} \in \mathfrak{g}_{\alpha}$  such  that $\theta X_{\alpha}= - X_{-\alpha} \in \mathfrak{g}_{-\alpha}$ for all $\alpha \in \Pi$.  Then given $H \in \mathfrak{a \subset s}$:  
\begin{equation*}
    \ad_r (H) (\psi_r X_\alpha )= \ad_r(H)(X_{\alpha}) + \frac{r-1}{r+1}\ad_r(H)(X_{- \alpha}),  
\end{equation*}
and
\begin{equation*}
\ad_r (H) X_\alpha = \alpha (H) \left( \frac{1 + r^2}{2r} X_\alpha - \frac{ r^2 -1}{2r} X_{-\alpha} \right).
\end{equation*}
Therefore $\ad_r (H) \left( X_{\alpha}^r \right) = \alpha (H) \left( X_\alpha + \frac{1-r}{r+1} X_{-\alpha} \right) = \alpha (H) \left( X_{\alpha}^r \right)  $.
\end{remark}

In addition, the representation of $K_H$ in $\mathfrak{g}_r$ makes invariant the subspace $\mathfrak{n}_{r, H} ^{+} $, because if $k \in K$, then $\Ad_r (k)$ commutes with $\ad_r \left( H \right)$. Therefore $\Ad_r (k)$ takes eigenspaces of $\ad_r (H)$ in eigenspaces. Thus we can induce the representation $\rho_r$ of $K_H$ in $\mathfrak{n}_{r,H} ^{+}$,  and by (\ref{comut}), we have 
\begin{equation*}  
 \psi_r \left( \rho (k) \cdot X \right) = \rho_r (k) \cdot \psi_r (X)  \qquad k \in K_H, \ X \in \mathfrak{n}_H ^{+}, 
\end{equation*}
where $\rho$ is the representation in the case $r=1$ (that is, the usual representation $\Ad$). Therefore 
\begin{equation*}  
K\times_{\rho_r} \mathfrak{n}_{r,H} ^{+} =  K\times_{\rho}  \psi_r \left( \mathfrak{n}_{H} ^{+} \right) .
\end{equation*}

So let's induce a diffeomorphism between $\Ad (G) \cdot H$ and $\Ad_r (G) \cdot H$ using the following map (this construction was proved in \cite[Proposition 2.4]{smgasgr1})
\begin{equation*}
\gamma_r : \Ad_r (G) \cdot H \rightarrow K\times_{\rho}  \psi_r \left( \mathfrak{n}_{H} ^{+} \right),
\end{equation*}
such that 
\begin{equation*}
Y= \Ad_r (k) (H + X) \mapsto  (k, X) \in  K\times_{\rho}  \psi_r \left( \mathfrak{n}_H ^{+} \right)
\end{equation*}
is a diffeomorphism that satisfies:

\begin{enumerate}
\item $\gamma_r$ X is equivariant with respect to the action of $K$.

\item $\gamma_r$ leads fibers into fibers.

\item $\gamma_r$ leads the orbit $\Ad_r (K) \cdot H$ in the null section of  $K\times_{\rho}  \psi_r \left( \mathfrak{n}_{H} ^{+} \right) $.
\end{enumerate}

It's easy to see that $( d\gamma_r)_x = \id$, for $x= \Ad_r (k) (H + Y) \in \Ad_r (G) \cdot H$.

Furthermore, difeomorphism $\gamma_r$ is defined by the vector bundle $K\times_{\rho}  \psi_r \left( \mathfrak{n}_{H} ^{+} \right)$ associated with the main bundle  $K \rightarrow K / K_H$ of $\Ad_r (G) \cdot H$, which is a homogeneous space. Using this diffeomorphism for $r>0$ we define the map $\widetilde{\psi}_r$ as follows: 
\begin{equation*}
\begin{tikzcd} \Ad(G) \cdot H \arrow{rr}{\widetilde{\psi}_r}
\arrow[swap]{d}{\gamma} & & \Ad_r (G) \cdot H \arrow{d}{\gamma_r}
\\K\times_{\rho} \mathfrak{n}_{H} ^{+} \arrow{rr}{\psi_r}& &
K\times_{\rho}  \psi_r \left( \mathfrak{n}_{H} ^{+} \right) \end{tikzcd}
\end{equation*}
which is a diffeomorphism, because $\psi_r$ is linear (in the complex is the sum of linear and anti-linear applications) and $\gamma_r$ is a difeomorphism, as seen above. We conclude 
\begin{equation*}
\left( d\widetilde{\psi} \right)_x = \psi_r, \qquad x \in \Ad_r (G) \cdot H.
\end{equation*}

Hence joining these constructions, we conclude that

\begin{teo}\label{difeor}
Let $H \in \mathrm{cl}(\mathfrak{a}^{+})$ and $r>0$, then the manifolds $\Ad (G) \cdot H$ and $\Ad_r (G) \cdot H$ are diffeomorphic by $\widetilde{\psi}_r$.
\end{teo}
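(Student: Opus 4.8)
The statement follows by assembling the commuting diagram that precedes it, so the plan is essentially to verify that the square actually commutes and that each arrow is a diffeomorphism. First I would recall the three ingredients: (i) the diffeomorphism $\gamma : \Ad(G)\cdot H \to K\times_\rho \mathfrak{n}_H^+$ from Gasparim--Grama--San Martin \cite[Proposition 2.4]{smgasgr1}, which identifies the adjoint orbit with the homogeneous vector bundle over the flag $\mathbb F_H = K/K_H$ with fiber $\mathfrak n_H^+$; (ii) the analogous diffeomorphism $\gamma_r : \Ad_r(G)\cdot H \to K\times_\rho \psi_r(\mathfrak n_H^+)$, which is the same construction applied to the deformed Lie algebra $\mathfrak g_r$ — here Lemma \ref{isoalg} and the remark after it guarantee that $\mathfrak g_r$ has the same Cartan decomposition, the same $\mathfrak a$, $\Sigma$, $\Pi$, $\mathfrak a^+$, and that $K$ is still a subgroup with the same action, so the construction of \cite{smgasgr1} goes through verbatim with $\mathfrak n_H^+$ replaced by $\mathfrak n_{r,H}^+ = \psi_r(\mathfrak n_H^+)$; (iii) the map $\psi_r(Z) = Z + \tfrac{r-1}{r+1}\theta Z$, which (in the real case) is an invertible linear map, hence induces a fiberwise-linear bundle isomorphism $K\times_\rho \mathfrak n_H^+ \to K\times_\rho \psi_r(\mathfrak n_H^+)$.

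The key algebraic point, already isolated as equation \eqref{comut} together with the intertwining relation $\psi_r(\rho(k)\cdot X) = \rho_r(k)\cdot\psi_r(X)$ for $k\in K_H$, is that $\psi_r$ is $K_H$-equivariant from $(\mathfrak n_H^+,\rho)$ to $(\mathfrak n_{r,H}^+,\rho_r)$; this is exactly what is needed for the fiberwise map $(k,X)\mapsto(k,\psi_r X)$ to descend to a well-defined map on the associated bundles, yielding the bottom arrow $\psi_r : K\times_\rho\mathfrak n_H^+ \to K\times_\rho\psi_r(\mathfrak n_H^+)$ in the diagram. I would then define $\widetilde\psi_r := \gamma_r^{-1}\circ\psi_r\circ\gamma$ and observe it is a composition of diffeomorphisms, hence a diffeomorphism; since $\psi_r$ is linear on fibers and the identity on the base $K/K_H$, and since $(d\gamma_r)_x=\id$ at points of the form $\Ad_r(k)(H+Y)$, the chain rule gives $(d\widetilde\psi_r)_x=\psi_r$ at such $x$, which is the closing remark.

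The one point requiring genuine care — and the main obstacle — is the complex case, flagged parenthetically in the text ("in the complex is the sum of linear and anti-linear applications"). There $\theta$ is conjugate-linear, so $\psi_r$ is an $\mathbb R$-linear but not $\mathbb C$-linear automorphism of the underlying real vector space; one must check that it is still a real-linear isomorphism (injectivity: if $Z + \tfrac{r-1}{r+1}\theta Z = 0$, apply $\theta$ and solve, using $\theta^2=\id$ and $\left|\tfrac{r-1}{r+1}\right|<1$ for $r>0$), that it carries $\mathfrak g_\alpha$ onto $\mathfrak g_\alpha^r$ for every root (this is the content of the remark computing $\ad_r(H)\psi_r X_\alpha = \alpha(H)\psi_r X_\alpha$), and that the equivariance \eqref{comut} holds at the level of real-analytic maps. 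Once these real-linearity checks are in hand, the diffeomorphism claim is immediate from the diagram; no further estimates are needed.
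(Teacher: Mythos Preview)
Your proposal is correct and follows exactly the paper's own argument: the theorem is stated as the immediate consequence of the commuting square $\widetilde\psi_r=\gamma_r^{-1}\circ\psi_r\circ\gamma$, with $\gamma$ and $\gamma_r$ the vector-bundle identifications of \cite[Proposition~2.4]{smgasgr1} and $\psi_r$ the $K_H$-equivariant fiberwise linear isomorphism guaranteed by \eqref{comut}. Your explicit injectivity check for $\psi_r$ in the complex (conjugate-linear $\theta$) case fills in a detail the paper leaves to the parenthetical remark, but otherwise the route is identical.
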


In addition, our intention is to take the above diffeomorphism to a diffeomorphism between $\Ad_r (G) \cdot H$ and $ K_{\ad} \cdot H$, for this define the application 
\begin{equation*}
\psi(X) = X + \theta X,
\end{equation*}
and notice that when $r \rightarrow \infty$
\begin{equation*}
\psi_r \rightarrow \psi.
\end{equation*}

\begin{lem}
\label{constpsi} The application $\psi$ defined above satisfies:

\begin{enumerate}
\item The image of $\psi$ is in $\mathfrak{k}$ and the kernel in $\mathfrak{s}$.

\item Let $X \in \mathfrak{k}$, we have $\psi \circ \ad (X) = \ad (X) \circ \psi$.

\item If $X \in \mathfrak{k}$ and $\alpha \in \Pi$, then 
\begin{equation*}
\Ad \left( e^{tX} \right) \psi X_\alpha = \psi \left( \Ad \left( e^{tX} \right) X_\alpha \right).
\end{equation*}
\end{enumerate}
\end{lem}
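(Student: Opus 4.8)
The plan is to reduce everything to the elementary fact that the Cartan involution $\theta$ is an automorphism of $\mathfrak{g}$ acting as $+\id$ on $\mathfrak{k}$ and as $-\id$ on $\mathfrak{s}$, so that $\psi = \id + \theta$ is, up to the factor $2$, the projection of $\mathfrak{g}$ onto $\mathfrak{k}$ along $\mathfrak{s}$.

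For item (1) I would decompose an arbitrary $Z \in \mathfrak{g}$ as $Z = Z_{\mathfrak{k}} + Z_{\mathfrak{s}}$ with $Z_{\mathfrak{k}} \in \mathfrak{k}$, $Z_{\mathfrak{s}} \in \mathfrak{s}$, and compute $\psi(Z) = Z + \theta Z = (Z_{\mathfrak{k}} + Z_{\mathfrak{s}}) + (Z_{\mathfrak{k}} - Z_{\mathfrak{s}}) = 2 Z_{\mathfrak{k}}$. This gives at once $\psi(\mathfrak{g}) \subset \mathfrak{k}$ and $\ker \psi = \{ Z \in \mathfrak{g} : Z_{\mathfrak{k}} = 0 \} = \mathfrak{s}$. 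For item (2) I would use that $\theta$ is a Lie algebra automorphism, hence $\theta \circ \ad(Z) = \ad(\theta Z) \circ \theta$ for every $Z \in \mathfrak{g}$; specializing to $Z = X \in \mathfrak{k}$, where $\theta X = X$, yields $\theta \circ \ad(X) = \ad(X) \circ \theta$, and therefore
\begin{equation*}
\psi \circ \ad(X) = (\id + \theta)\,\ad(X) = \ad(X) + \ad(X)\,\theta = \ad(X)\,(\id + \theta) = \ad(X) \circ \psi .
\end{equation*}

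For item (3) I would first upgrade (2) to $\psi \circ \ad(X)^{k} = \ad(X)^{k} \circ \psi$ for all $k \geq 0$ by an immediate induction, and then sum the exponential series, obtaining $\psi \circ \Ad(e^{tX}) = \psi \circ e^{t\,\ad(X)} = e^{t\,\ad(X)} \circ \psi = \Ad(e^{tX}) \circ \psi$ as endomorphisms of $\mathfrak{g}$. Evaluating this identity at a root vector $X_\alpha \in \mathfrak{g}_\alpha$ gives $\Ad(e^{tX})\,\psi X_\alpha = \psi\bigl(\Ad(e^{tX}) X_\alpha\bigr)$, which is the asserted equality.

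The argument is entirely formal and I expect no genuine obstacle. The only point requiring a moment's care is the interchange of $\psi$ with the infinite series defining $\Ad(e^{tX})$, but this is justified because $\psi$ is a fixed linear endomorphism of the finite-dimensional space $\mathfrak{g}$ and the series converges absolutely, so it passes termwise through $\psi$; the inductive step for the powers of $\ad(X)$ is likewise routine.
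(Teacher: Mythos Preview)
Your proof is correct and follows essentially the same line as the paper's: both rely on the fact that $\theta$ is an automorphism with $\theta X = X$ for $X \in \mathfrak{k}$, giving $\theta\,\ad(X) = \ad(X)\,\theta$, and then pass to the exponential series. Your derivation of item (3) directly from item (2) by induction on powers of $\ad(X)$ is slightly more streamlined than the paper's, which redoes the commutation with $\theta$ at the level of $\ad^{k}$ applied to $X_\alpha$ rather than invoking (2), but the content is identical.
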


\begin{proof}
Item 1 is immediate from the definition of $\kappa$.
\begin{enumerate}
\item[\emph{2. }] Let $X \in \mathfrak{k}$ and $Y \in \mathfrak{g}$ 
\begin{equation*}
    \psi [X,Y] = [X,Y] + \theta [X,Y] =  [X,Y] + [X, \theta Y] = [X, \psi Y].
\end{equation*}

\item[\emph{3. }] Let $X \in \mathfrak{k}$ and $\alpha \in \Pi$, note that
for $Y\in \mathfrak{g}$ 
\begin{equation*}
\ad(Y) \cdot \theta X_\alpha = [Y,\theta X_\alpha ] = \theta [\theta Y,
X_\alpha ] = \theta \ad(\theta Y) X_\alpha,
\end{equation*}
inductively, we have $\ad^{k}Y \cdot \theta X_\alpha = \theta \ad^{k}(\theta
Y) \cdot X_\alpha$, then 
\begin{align*}
e^{t \ad(X)} \left( \theta X_\alpha \right) &= \sum_{k>0} \cfrac{ t^{k} \ad
^{k} (X)}{k!} \cdot \theta X_\alpha \\
&= \sum_{k>0} \cfrac{ t^{k} \theta \ad ^{k} (\theta X)}{k!} \cdot X_\alpha = \theta \cdot e^{t \ad(X)} \left( X_\alpha \right),
\end{align*}
because $\theta X = X$ and we have 
\begin{align*}
\Ad \left( e^{tX} \right) \cdot \psi X_\alpha &= e^{t \ad (X)} \left(
X_\alpha \right) + \theta \cdot e^{t \ad (X)} \left( X_\alpha \right) \\
&= \psi \left( e^{ t \ad(X)} \cdot X_\alpha \right) = \psi \left( \Ad \left( e^{t X }\right) X_\alpha \right). \qedhere
\end{align*} 
\end{enumerate}
\end{proof}

Hence define 
\begin{equation}
\Ad_{\infty} (G) \cdot H : = \underset{k \in K}{\bigcup} \Ad (k) \left(
H  + \psi \left( \mathfrak{n}_H ^{+}
\right) \right),
\end{equation}
when $r \rightarrow \infty$ we have 
\begin{equation*}
\Ad_{r} (G) \cdot H \rightarrow \Ad_{\infty} (G) \cdot H.
\end{equation*}

It is convenient to define the $\infty$-root spaces by $\mathfrak{g}_\alpha ^{\infty} = \psi \left( \mathfrak{g}_\alpha \right),$ and consequently 
\begin{equation*} 
\mathfrak{n}_{\infty , H} ^{+} = \sum_{\alpha(H) >0} \mathfrak{g}_\alpha ^{\infty} = \sum_{\alpha (H)>0 } \psi \left( \mathfrak{g}_\alpha \right) = \psi \left( \mathfrak{n}_H ^{+} \right) .
\end{equation*}

Therefore, analogous to the above, define 
\begin{equation*}
\gamma_\infty : \Ad_\infty (G) \cdot H \rightarrow K\times_{\rho} \psi \left( \mathfrak{n}_H ^{+} \right) ,
\end{equation*}
such that 
\begin{equation*}
Y= \Ad (k) (H + X ) \mapsto  (k, X ) \in K\times_{\rho}  \psi (\mathfrak{n}_{H} ^{+}) .
\end{equation*}

So we have that the application $\gamma_\infty$ is a diffeomorphism, seen as a vector bundle. The map $\gamma_\infty$ is well defined as a consequence of $\psi$, the bijectivity is a consequence of the way in which the manifold $\Ad(G)_{\infty} \cdot H$ was defined, and the differentiability is given by the idea of making $r \rightarrow \infty$, in the difeomorphism $\gamma_r$. Then we can define the  difeomorphism $\widetilde{\psi}$, given by: 
\begin{equation*}
\begin{tikzcd} \Ad(G) \cdot H \arrow{rr}{\widetilde{\psi}}
\arrow[swap]{d}{\gamma} & & \Ad_{\infty} (G) \cdot H
\arrow{d}{\gamma_\infty} \\K\times_{\rho} \mathfrak{n}_{H} ^{+}
\arrow{rr}{\psi}& & K\times_{\rho} \psi \left( \mathfrak{n}_H ^{+} \right)
\end{tikzcd}
\end{equation*}
in the same way as for $\widetilde{\psi}_r$. So using $r \rightarrow \infty$ 
\begin{equation*}
\left( d\gamma_\infty \right)_x = \id \quad \text{and} \quad \left( d 
\widetilde{\psi} \right)_x = \psi.
\end{equation*}

Soon, we can conclude that

\begin{teo}\label{difeoinf}
The manifolds $\Ad_{\infty} (G) H$ and $\Ad_{r} (G) \cdot H$ are diffeomorphic for $r>0$, the diffeomorphisms are given by $\widetilde{\psi}$ and $\widetilde{\psi}_r$ defined above.
\end{teo}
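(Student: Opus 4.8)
\emph{Plan of proof.} The idea is to glue the two commuting squares that define $\widetilde{\psi}_r$ and $\widetilde{\psi}$ along their common corner $\Ad(G)\cdot H$. By Theorem \ref{difeor} the map $\widetilde{\psi}_r\colon\Ad(G)\cdot H\to\Ad_r(G)\cdot H$ is already known to be a diffeomorphism with $(d\widetilde{\psi}_r)_x=\psi_r$, so the only genuinely new point is to verify that $\widetilde{\psi}\colon\Ad(G)\cdot H\to\Ad_\infty(G)\cdot H$ is a diffeomorphism; granting this, the diffeomorphism asserted by the theorem is the composite $\widetilde{\psi}\circ\widetilde{\psi}_r^{-1}\colon\Ad_r(G)\cdot H\to\Ad_\infty(G)\cdot H$, whose inverse is $\widetilde{\psi}_r\circ\widetilde{\psi}^{-1}$ and whose differential at a point is $\psi\circ\psi_r^{-1}$ since $(d\widetilde{\psi})_x=\psi$.

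To see that $\widetilde{\psi}=\gamma_\infty^{-1}\circ\psi\circ\gamma$ is a diffeomorphism I would check that each of the three factors is one. The map $\gamma\colon\Ad(G)\cdot H\to K\times_\rho\mathfrak{n}_H^{+}$ is the diffeomorphism of \cite[Proposition 2.4]{smgasgr1} (the value $r=1$ of $\gamma_r$), with $(d\gamma)_x=\id$. The map $\psi$ is an isomorphism of the associated vector bundles $K\times_\rho\mathfrak{n}_H^{+}\to K\times_\rho\psi(\mathfrak{n}_H^{+})$ over $\mathbb{F}_H=K/K_H$: on each fibre it is the restriction of $\psi$ to $\mathfrak{n}_H^{+}$, a real-linear isomorphism onto $\psi(\mathfrak{n}_H^{+})$ (a sum of a complex-linear and a complex-antilinear isomorphism in the complex case), and by Lemma \ref{constpsi}(2)--(3) it intertwines the $K_H$-action $\rho$ with the $K_H$-action $\rho_\infty$ on $\psi(\mathfrak{n}_H^{+})$ (the $r\to\infty$ analogue of the relation $\psi_r(\rho(k)X)=\rho_r(k)\psi_r(X)$), so it descends to a bundle isomorphism covering $\id_{K/K_H}$. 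Finally $\gamma_\infty\colon\Ad_\infty(G)\cdot H\to K\times_\rho\psi(\mathfrak{n}_H^{+})$ was shown above to be a bundle diffeomorphism with $(d\gamma_\infty)_x=\id$. Composing yields that $\widetilde{\psi}$ is a diffeomorphism and that $(d\widetilde{\psi})_x=\psi$, because $\psi$ is affine-linear on fibres and $\gamma,\gamma_\infty$ have identity differentials in the fibre directions.

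The step I expect to be the real obstacle is making rigorous the assertion that $\gamma_\infty^{-1}$ is smooth, i.e.\ that the set $\Ad_\infty(G)\cdot H=\bigcup_{k\in K}\Ad(k)\bigl(H+\psi(\mathfrak{n}_H^{+})\bigr)$ is an embedded submanifold of $\mathfrak{g}$ on which $\gamma_\infty$ is a genuine bundle chart rather than a mere set bijection; the phrase ``making $r\to\infty$ in $\gamma_r$'' needs substance. The clean way to supply it is to observe, using $\psi(X_\alpha)=X_\alpha+\theta X_\alpha\in\mathfrak{k}$, that $\psi(\mathfrak{n}_H^{+})=\sum_{\alpha(H)>0}\mathfrak{k}_\alpha=\ad(H)\mathfrak{s}$, whence for $Y=\Ad(k)H$ one has $Y+\ad(Y)\mathfrak{s}=\Ad(k)\bigl(H+\ad(H)\mathfrak{s}\bigr)$ and therefore
\begin{equation*}
\Ad_\infty(G)\cdot H=\bigcup_{Y\in\Ad(K)H}\bigl(Y+\ad(Y)\mathfrak{s}\bigr)=K_{\ad}\cdot H .
\end{equation*}
Thus $\Ad_\infty(G)\cdot H$ is literally the coadjoint orbit of the semi-direct product $\mathfrak{k}_{\ad}$ through $H$, a homogeneous space for $K_{\ad}$, and Proposition \ref{difeoorb} (applied to $\rho=\Ad|_K$ acting on $\mathfrak{s}$) already presents it as the total space of a vector bundle over $\mathbb{F}_H$ diffeomorphic to $T^{\ast}\mathbb{F}_H$, with $\gamma_\infty$ the corresponding bundle chart; smoothness of $\gamma_\infty$ and of $\gamma_\infty^{-1}$ is then immediate. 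With this identification in hand, the remaining points — that $\Ad_r(G)\cdot H$ converges to $\Ad_\infty(G)\cdot H$ inside $\mathfrak{g}$ and that $\widetilde{\psi}_r\to\widetilde{\psi}$ — follow routinely from $\psi_r\to\psi$ together with Proposition \ref{teo: deformac}.
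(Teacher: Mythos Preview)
Your proposal is correct and follows essentially the same route as the paper: both arguments assemble the diffeomorphism from the commuting square $\widetilde{\psi}=\gamma_\infty^{-1}\circ\psi\circ\gamma$ together with the already established $\widetilde{\psi}_r$ of Theorem~\ref{difeor}, and both compute the differentials fibrewise. The one substantive difference is organizational: the paper justifies the smoothness of $\gamma_\infty$ and its inverse only heuristically (``making $r\to\infty$ in the diffeomorphism $\gamma_r$''), whereas you correctly flag this as the delicate point and resolve it by pulling forward the identification $\psi(\mathfrak{n}_H^{+})=\ad(H)\mathfrak{s}$, hence $\Ad_\infty(G)\cdot H=K_{\ad}\cdot H$, so that Proposition~\ref{difeoorb} supplies the smooth bundle structure directly. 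In the paper this identification is carried out only \emph{after} the theorem (in the paragraphs leading to Corollary~\ref{cororb}); your reordering makes the argument self-contained and removes the hand-wave, but the underlying ingredients are identical.
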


Then 
\begin{equation*}
K_{\ad}\cdot H =  
\underset{k \in K}{\bigcup} \Ad (k) \left( H \right) + \left[ \Ad (k) \cdot H , \mathfrak{s} \right].
\end{equation*}
The fiber in $H$ is $H + [H , \mathfrak{s}]$, but $\mathfrak{s}= \mathfrak{a} \oplus \sigma (\mathfrak{n})$, then $[H , \mathfrak{s}] = [H , \sigma (\mathfrak{n})]$ because $[H ,  \mathfrak{a}]=0$. In addition, for $\alpha \in \Pi ^{+}$ 
\begin{equation*}
[H , X_\alpha] = \alpha (H ) X_\alpha,
\end{equation*}
such that

\begin{itemize}
\item $\alpha (H) =0$ if $\alpha \notin \langle \Theta_H \rangle ^{+}$, then $\alpha (H) \mathfrak{g}_\alpha = 0$ for $\alpha \notin \langle \Theta_H \rangle ^{+}$.

\item $\alpha (H) > 0$ if $\alpha \in \langle \Theta_H \rangle ^{+}$, then $\alpha (H) \mathfrak{g}_\alpha = \mathfrak{g}_\alpha$ for $ \alpha \in \langle \Theta_H \rangle ^{+}$.
\end{itemize}

Thus $[H , \sigma (\mathfrak{n})]= \frac{1}{2} \left( [H , X_\alpha] - [H , \theta X_\alpha] \right)$, but 
\begin{equation*}
[H , \theta X_\alpha] = \theta [\theta H , X_\alpha] = \theta [- H , X_\alpha] = - \theta [ H , X_\alpha],
\end{equation*}
because $H \in \mathfrak{a}\subset \mathfrak{s}$, then 
\begin{equation*}
  [H , \sigma (X_\alpha )] = \frac{1}{2} \left( [H , X_\alpha] + \theta [H , X_\alpha] \right)  = \frac{1}{2} \psi \left( \alpha (H) \cdot X_\alpha \right),
  \end{equation*}
and as $[H , X_\alpha] \neq 0$ if and only if $\alpha \in \langle \Theta_H \rangle ^{+}$, we have
\begin{equation*}
[H , \mathfrak{s}] = \psi \left( \mathfrak{n}_H ^{+} \right).
\end{equation*}
So the fibers in $H$ of $K_{\ad}\cdot H$ and $\Ad_\infty (G) \cdot H$ coincide. In an equivalent way, we can identify the other fibers of these spaces for each $k \in K$. 
\begin{equation*}
\Ad (k) \left( H \right) + \underset{\in \mathfrak{k}}{\underbrace{  \left[ \Ad (k) \cdot H , \mathfrak{s} \right]}} \mapsto \Ad (k) \left( H \right) + \underset{\in \mathfrak{k}}{\underbrace{ \psi \left( \Ad (k) \cdot \mathfrak{n}_H ^{+} \right)}} .
\end{equation*}
So we can identify the manifolds $K_{\ad}\cdot H$ and $\Ad_\infty (G) \cdot H $ that are diffeomorphic from the bundle $T^{\ast}  \mathbb{F}_H$. We can conclude:

\begin{cor}
\label{cororb} The adjoint orbit $\Ad(G) \cdot H$ deforms in $K_{\ad}\cdot H$, by $\psi _{r}$.
\end{cor}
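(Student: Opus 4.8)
The plan is to assemble Corollary~\ref{cororb} as the natural limit of the constructions already in place, treating the parameter value $r=\infty$ as a genuine point of the family rather than a mere formal limit. First I would observe that by Theorem~\ref{difeor} we have, for every $r>0$, a diffeomorphism $\widetilde{\psi}_r:\Ad(G)\cdot H\to\Ad_r(G)\cdot H$ with $(d\widetilde{\psi}_r)_x=\psi_r$, and that by Theorem~\ref{difeoinf} together with Lemma~\ref{constpsi} the limiting map $\widetilde{\psi}:\Ad(G)\cdot H\to\Ad_\infty(G)\cdot H$ is also a diffeomorphism, fitting into the commuting square with $\gamma$ and $\gamma_\infty$. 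So the only thing that remains is to identify $\Ad_\infty(G)\cdot H$ with $K_{\ad}\cdot H$ as the same submanifold of $\mathfrak g$ — which is exactly the fiberwise computation carried out in the paragraphs immediately preceding the statement, where one shows $[H,\mathfrak s]=\psi(\mathfrak n_H^+)$ and, more generally, $[\Ad(k)H,\mathfrak s]=\psi(\Ad(k)\mathfrak n_H^+)$ for all $k\in K$ using item~2 of Lemma~\ref{constpsi}.

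The key steps, in order, would be: (1) recall the cylindrical description $K_{\ad}\cdot H=\bigcup_{k\in K}\Ad(k)(H)+[\Ad(k)\cdot H,\mathfrak s]$ from Proposition~\ref{propuniafibras} applied to $\rho=\Ad|_{\mathfrak k}$ on $\mathfrak s$, together with the identity $\mu(X\wedge Y)=[X,Y]$ established in Subsection~\ref{sdpcd}; (2) compute each fiber $[\Ad(k)H,\mathfrak s]$ and show it equals $\psi(\Ad(k)\mathfrak n_H^+)$, which for the base fiber reduces to $[H,\mathfrak s]=[H,\sigma(\mathfrak n)]=\psi(\mathfrak n_H^+)$ via the root-space bookkeeping already displayed, and in general follows by transporting with $\Ad(k)$ and invoking $\psi\circ\ad(X)=\ad(X)\circ\psi$ for $X\in\mathfrak k$; (3) conclude that $K_{\ad}\cdot H=\Ad_\infty(G)\cdot H$ as subsets of $\mathfrak g$, with matching fibration over $\mathbb F_H$; (4) combine with Theorems~\ref{difeor} and~\ref{difeoinf} to get, for each $r>0$, a diffeomorphism $K_{\ad}\cdot H\xrightarrow{\ \widetilde{\psi}^{-1}\ }\Ad(G)\cdot H\xrightarrow{\ \widetilde{\psi}_r\ }\Ad_r(G)\cdot H$, and note that as $r$ varies in $(0,\infty]$ the maps $\psi_r$ (hence $\widetilde{\psi}_r$) depend continuously on $r$ with $\psi_r\to\psi$, so that the family $\{\Ad_r(G)\cdot H\}_{r\in(0,\infty]}$ is a continuous deformation of embeddings of $T^*\mathbb F_H$ interpolating between $\Ad(G)\cdot H$ at $r=1$ and $K_{\ad}\cdot H$ at $r=\infty$.

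I expect the main obstacle to be making precise the sense in which "$\Ad_r(G)\cdot H\to K_{\ad}\cdot H$ as $r\to\infty$" is a statement about an actual deformation and not just pointwise convergence of sets: one must check that the diffeomorphisms $\gamma_r$ converge to $\gamma_\infty$ in a way that makes $\gamma_\infty$ genuinely smooth (this is asserted in the text "the differentiability is given by the idea of making $r\to\infty$"), and that the identification $\Ad_\infty(G)\cdot H=K_{\ad}\cdot H$ respects the symplectic/bundle structure, not merely the underlying set. Since all the analytic content of these convergence claims has already been granted in the statements of Theorem~\ref{difeoinf} and the discussion around $\gamma_\infty$, the proof of the corollary itself is short: it is essentially a matter of chaining together the established diffeomorphisms and recording that $\psi_r$ is the explicit deformation parameter realizing the passage $\Ad(G)\cdot H\rightsquigarrow K_{\ad}\cdot H$. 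I would therefore write the proof as: "By Proposition~\ref{teo: deformac} and the computation above, $\Ad_\infty(G)\cdot H=K_{\ad}\cdot H$; by Theorems~\ref{difeor} and~\ref{difeoinf} the maps $\widetilde{\psi}_r$ and $\widetilde{\psi}$ are diffeomorphisms with differentials $\psi_r$ and $\psi$; since $\psi_r\to\psi$ as $r\to\infty$, the family $r\mapsto\Ad_r(G)\cdot H$ is a deformation of $\Ad(G)\cdot H$ into $K_{\ad}\cdot H$ induced by $\psi_r$."
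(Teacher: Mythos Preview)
Your proposal is correct and follows essentially the same route as the paper: the corollary is stated there without a separate proof, as the immediate consequence of the fiberwise identification $K_{\ad}\cdot H=\Ad_\infty(G)\cdot H$ (via $[H,\mathfrak s]=\psi(\mathfrak n_H^+)$ and its $\Ad(k)$-transport) combined with Theorems~\ref{difeor} and~\ref{difeoinf}. If anything, you are more explicit than the paper about the sense in which $\psi_r\to\psi$ furnishes a genuine deformation, but the underlying argument is the same.
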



\section{Hermitian symplectic form} \label{csalg}

In this section we will take advantage of the construction given in Section  \ref{csalg} for  complex semisimple algebras. Let  $\mathfrak{g}$ be a complex semisimple algebra and $\mathfrak{u}$ the compact real form of $\mathfrak{g}$, such that $\mathfrak{g}=\mathfrak{u}\oplus i\mathfrak{u}$ is a Cartan decomposition of $\mathfrak{g}$, with Cartan involution $\tau$.  Let $G = \mathrm{Aut}_{0}\mathfrak{g}$, then for $X,Y \in \mathfrak{g}$ 
\begin{equation*}
\mathcal{H}_{\tau}(X,Y) = - \langle X, \tau Y \rangle 
\end{equation*}
is a Hermitian form of $\mathfrak{g}$, where $\langle \cdot , \cdot \rangle$ is the complex Cartan-Killing form of $\mathfrak{g}$ (see \cite[Lema 12.17]{smalg}). The imaginary part of $\mathcal{H}_\tau$ will be denoted by $\Omega_\tau$, and $\Omega_\tau$ is a symplectic form on $\mathfrak{g}$. In addition, for $H \in \mathfrak{g}$ the restriction of $\Omega_\tau$ in the adjoint orbit $\Ad (G) \cdot H$ is a symplectic form.  Furthermore,  the restriction of $\Omega_\tau$ in $\Ad_r (G) \cdot H$  is a symplectic form for $r>0$, because $\mathcal{H}_\tau$ is given by the Cartan-Killing form of $\mathfrak{u}$ which is $\Ad(U) $-invariant.

\subsection{Lagrangian sections} \label{secLagran}

As $\mathfrak{g}=\mathfrak{u}\oplus i\mathfrak{u}$ is a Cartan decomposition with Cartan involution $\tau$, for $\mathfrak{g}$ semisimple complex Lie algebra. If $U\subset G$ is the compact subgroup with Lie algebra $\mathfrak{u}$. Then, the representation of the semi-direct product (described above in the general case) is $U_{\ad}$. If $H\in \mathfrak{s}=i\mathfrak{u}$, its semi-direct orbit is denoted by $U_{\ad} \cdot H$. To begin with, let's see that the restriction of  $\Omega_\tau ( \cdot , \cdot )= \mathrm{im} \left(\mathcal{H}_{\tau} (\cdot , \cdot ) \right)$ is a symplectic form in $U_{\ad} \cdot H$.

\begin{pps}
\label{propRestriNaoDegene} The form $\Omega_\tau$ of $\mathfrak{g}$ restricted to $U_{\ad} \cdot H$ is a symplectic form, for $H \in \mathrm{cl}(\mathfrak{a}^{+})$.
\end{pps}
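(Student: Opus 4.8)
The plan is to reduce the non-degeneracy of $\Omega_\tau|_{U_{\ad}\cdot H}$ to a fiberwise computation using the structure of the orbit established in the previous section. Recall from Proposition~\ref{propuniafibras} and Proposition~\ref{difeoorb} (applied with $G=U$, $V=\mathfrak{s}=i\mathfrak{u}$, $\rho=\Ad$) that
\begin{equation*}
U_{\ad}\cdot H=\bigcup_{X\in \Ad(U)H}\bigl(X+\ad(X)\mathfrak{s}\bigr),
\end{equation*}
with $\ad(X)\mathfrak{s}\subset\mathfrak{u}$, and that $U_{\ad}\cdot H\cong T^{*}\mathbb{F}_H$ as a bundle over the flag $\mathbb{F}_H=\Ad(U)H$. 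Since $\Omega_\tau$ is $\Ad(U)$-invariant (it is built from the Cartan--Killing form of $\mathfrak u$, which is $\Ad(U)$-invariant, and $\tau$ commutes with $\Ad(U)$), it suffices to verify non-degeneracy of $\Omega_\tau$ at a single point of the fiber over the basepoint $b_H$, namely at $H$ itself, and then propagate by equivariance. So the first step is: identify the tangent space $T_H\bigl(U_{\ad}\cdot H\bigr)$ explicitly.

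The second step is that tangent space computation. By Proposition~\ref{propuniafibras} the ``vertical'' part of $T_H(U_{\ad}\cdot H)$ is the fiber direction $\ad(H)\mathfrak{s}\subset\mathfrak{k}=\mathfrak u$, which by the root-space description equals $\sum_{\alpha(H)>0}\mathfrak{u}_\alpha$-type pieces; the ``horizontal'' part is $T_H\bigl(\Ad(U)H\bigr)=\ad(\mathfrak u)H=\sum_{\alpha(H)>0}(\text{real span of }X_\alpha-\theta X_\alpha\text{-type vectors})$, i.e. a copy of $\mathfrak{n}_H^{+}$ sitting inside $\mathfrak s$. Concretely, choosing a Weyl basis adapted to $\tau$, I would write $T_H(U_{\ad}\cdot H)$ as the direct sum, over roots $\alpha$ with $\alpha(H)>0$, of a $2\dim\mathfrak g_\alpha$-real-dimensional subspace spanned by the $\mathfrak s$-component (horizontal) and the $\mathfrak k$-component (vertical) coming from $\mathfrak g_\alpha\oplus\mathfrak g_{-\alpha}$. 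The key point is that these blocks for distinct $\alpha$ are $\mathcal H_\tau$-orthogonal, so the whole question block-diagonalizes.

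The third step is to evaluate $\Omega_\tau=\operatorname{im}\mathcal H_\tau$ on each block. Within the block attached to $\alpha$, the horizontal vector is (up to scalars) $u=X_\alpha-\tau X_\alpha$ and the vertical vector is $v=\ad(H)\cdot(\text{something in }\mathfrak s)=\alpha(H)(X_\alpha+\tau X_\alpha)$-type; one computes $\mathcal H_\tau(u,v)=-\langle u,\tau v\rangle$ and checks that its imaginary part pairs the horizontal and vertical directions non-degenerately, using $\langle X_\alpha,\tau X_\alpha\rangle\neq 0$ and $\alpha(H)>0$. Because $\mathfrak g$ is complex, each root space contributes a complex line, so within a block there is also an $i$-scaling to track; the pairing of the real and imaginary parts of these lines is exactly what makes $\Omega_\tau$ symplectic on $\mathfrak g$ in the first place (the cited \cite[Lema 12.17]{smalg}), and the same computation restricted to the block gives non-degeneracy there.

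The main obstacle I anticipate is bookkeeping, not conceptual difficulty: one must be careful that the horizontal subspace $T_H\mathbb{F}_H\subset\mathfrak s$ and the vertical subspace $\ad(H)\mathfrak s\subset\mathfrak k$ are genuinely $\mathcal H_\tau$-\emph{paired} rather than each being isotropic-and-decoupled — i.e. that $\Omega_\tau$ restricted to horizontal$\times$horizontal or vertical$\times$vertical may well vanish, but the cross terms do not, which is precisely the Lagrangian-pair structure suggested by the section title. Establishing that cross-pairing is non-degenerate is the crux; once the block decomposition and the $\Ad(U)$-equivariant propagation to all fibers are in place, the rest is the routine Weyl-basis calculation with the normalization $\langle X_\alpha,\tau X_{\alpha}\rangle\ne 0$.
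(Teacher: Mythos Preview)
Your overall strategy---show that the vertical and horizontal pieces pair non-degenerately under $\Omega_\tau$, then use $\Ad(U)$-invariance---is the right one and matches the paper's. But there is a genuine gap in the reduction step.

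You propose to verify non-degeneracy only at the single point $H$ and then ``propagate by equivariance.'' The equivariance you invoke is $\Ad(U)$-invariance of $\Omega_\tau$, but $\Ad(U)$ does \emph{not} act transitively on $U_{\ad}\cdot H$: it moves the basepoint $H$ to other points $\Ad(u)H$ of the zero section $\mathbb{F}_H$, and hence carries the fiber over $H$ to the fiber over $\Ad(u)H$, but it does \emph{not} carry $H$ to a point $H+X$ with $X\neq 0$ in the same fiber. At such a point the tangent space is genuinely different: it is spanned by the (unchanged) vertical subspace $\ad(H)(i\mathfrak u)$ together with the vectors $[A,H+X]=[A,H]+[A,X]$, $A\in\mathfrak u$, and the extra term $[A,X]$ is not zero. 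So checking only at $H$ gives non-degeneracy on the zero section, not on the whole orbit. (The abelian factor of $U_{\ad}$ does act transitively along fibers, but you have not argued---and it is not automatic---that its coadjoint action preserves $\Omega_\tau$.)

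The paper closes this gap, and in doing so also streamlines the computation, by appealing to the linear-algebra criterion recorded in the Appendix (Proposition~\ref{propFormaDegenerada}): a skew form is non-degenerate on a space $V$ iff it admits an isotropic subspace of dimension $\tfrac12\dim V$ that is \emph{maximal} isotropic. At every point $\xi=H+X$ of the fiber over $H$, the vertical subspace $\ad(H)(i\mathfrak u)\subset\mathfrak u$ is isotropic (since $\mathfrak u$ is $\Omega_\tau$-Lagrangian in $\mathfrak g$) and has half the dimension of the orbit; the task is only to show it is maximal isotropic in $T_\xi(U_{\ad}\cdot H)$. Because the extra piece $[A,X]$ lies in $\mathfrak u$, it pairs trivially with vertical vectors, so maximal isotropy at $H+X$ reduces to exactly the same pairing question as at $H$: given a nonzero horizontal $Z=[A,H]\in\mathfrak s$, find a vertical $v$ with $\Omega_\tau(v,Z)\neq 0$. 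The paper's answer is shorter than your block-by-block Weyl-basis computation: since the horizontal space $\ad(H)(\mathfrak u)$ equals $i$ times the vertical space $\ad(H)(i\mathfrak u)$, one may take $v=iZ$, and then $\mathcal H_\tau(iZ,Z)=i\,\mathcal H_\tau(Z,Z)$ is purely imaginary and nonzero. Finally $\Ad(U)$-invariance carries the whole fiber over $H$ to the other fibers.

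In short: your computation at $H$ is fine (and could be simplified as above), but you must either extend it to every $H+X$ in the fiber---most cleanly via the maximal-isotropic criterion, using that $[A,X]\in\mathfrak u$ is $\Omega_\tau$-orthogonal to the vertical---or else supply a separate argument that the abelian part of $U_{\ad}$ preserves $\Omega_\tau$.
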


\begin{proof}
The restriction is a closed 2-form because it is the pull-back of the imaginary part of $\mathcal{H}_{\tau }$ by inclusion. Hence, it remains to be seen that the restriction is a non-degenerate 2-form. Take a semi-direct coadjoint orbit
\begin{equation*}
\mathcal{O} =\bigcup\limits_{Y\in \Ad \left( U\right) H}\left( Y+\ad \left( Y\right) \left( i\mathfrak{u}\right) \right), \qquad H\in \mathrm{cl} \left( \mathfrak{a}^{+} \right).
\end{equation*}

The  tangent space to a fiber $Y+\ad \left( Y\right) \left( i \mathfrak{u}\right)$ is $\ad \left( Y\right) \left( i\mathfrak{u} \right)$ which is a subspace of $\mathfrak{u}$, and a Lagrangian subspace of $\mathfrak{g}$. Hence the tangent spaces to the fibers are isotropic subspaces for the  restriction of $\Omega_\tau$. The dimension of a fiber is half the dimension of the total orbit. Therefore, by Proposition \ref{propFormaDegenerada} to prove that the restriction of $\Omega_\tau$ is non-degenerate, it is enough to show that the tangent spaces to fibers are maximal isotropic. Take an element $\xi =H+X$ in the fiber over the origin $H$ with $X\in \ad \left( H\right) \left( i\mathfrak{u}\right)$. In terms of root spaces
\begin{equation*}
\ad \left( H\right) \left( i\mathfrak{u}\right) =\sum_{\alpha \in \Pi} \mathfrak{u}_{\alpha }
\end{equation*}
where $\mathfrak{u}_{\alpha }=\left( \mathfrak{g}_{\alpha }\oplus \mathfrak{g}_{-\alpha }\right) \cap \mathfrak{u}$. The tangent space $T_{\xi} \mathcal{O}$ of the  orbit $\mathcal{O}$ in $\xi =H+X$ is generated by this vertical space $\ad \left( H\right) \left( i\mathfrak{u}\right)$ and by the vectors $\ad \left( A \right) \xi$, with $A\in \mathfrak{u}$, such that
\begin{equation*}
\left[ A,H+X\right] =\left[ A,H\right] +\left[ A,X\right] \qquad X\in \ad\left( H\right) \left( i\mathfrak{u}\right) \subset \mathfrak{u}.
\end{equation*}
The component $\left[ A,X\right] \in \mathfrak{u}$, so if $v\in \ad \left( H\right) \left( i\mathfrak{u}\right)$ is a vector of the vertical tangent space then
\begin{equation*}
\Omega_\tau \left( v,\left[ A,H\right] +\left[ A,X\right] \right) =\Omega_\tau \left( v,\left[ A,H\right] \right)
\end{equation*}
since the Hermitian form $\mathcal{H}_{\tau}$ is real in $\mathfrak{u}$, that is, $\mathfrak{u}$ is a Lagrangian subspace for $\Omega_\tau $. Then, to show that the tangent space to the fiber is maximal isotropic it must be shown that given $\left[ A,H\right]$ with $A\in \mathfrak{u}$, there is an element $v$ of the tangent space to the fiber such that $\Omega_\tau \left( v,\left[ A,H\right] \right) \neq 0$. Now, the subspace 
\begin{equation*}
\{\left[ A,H\right] :A\in \mathfrak{u}\}
\end{equation*}
is nothing less than the tangent space to the  orbit $\Ad \left( U\right) \cdot H$ and is given by $\ad \left( H\right) \left( \mathfrak{u}\right) =i\ad \left( H\right) \left( i\mathfrak{u}\right)$. Therefore, it all comes down to verify that given $Z\in \ad \left( H\right) \left( \mathfrak{u}\right) \subset \mathfrak{s}=i\mathfrak{u}$, $Z\neq 0$, there is $v\in \ad \left( H\right) \left( i\mathfrak{u}\right)$ such that $\Omega_\tau \left( v,Z\right) \neq 0$. But this is immediate as $\mathcal{H}_{\tau} \left( Z,Z\right) >0$ since $\mathcal{H}_{\tau}$ is positively defined in $\mathfrak{s}$. Hence if $v=iZ\in \ad \left( H\right) \left( i\mathfrak{u}\right) $ then \begin{equation*}
\mathcal{H}_{\tau }\left( v,Z\right) =\mathcal{H}_{\tau }\left( iZ,Z\right)
=i\mathcal{H}_{\tau }\left( Z,Z\right)
\end{equation*}
is imaginary and $\neq 0$ which means that $\Omega_\tau \left( v,Z\right) \neq 0$.

In short, it was shown that (the restriction of) $\Omega_\tau$ is a symplectic form along the fiber $H+\ad \left( H\right) \left( i\mathfrak{u}\right)$ over the origin $H$. In the other fibers the result is obtained by using the fact that $\Omega_\tau$ is invariant by $U$ and taking into account that the fiber over $Y=\Ad \left( u\right) \cdot H$, $u\in U$, is given by $\Ad \left( u\right) \left( H+\ad \left( H\right) \left( i\mathfrak{u}\right) \right)$.
\end{proof}

Then the restriction of $\Omega_\tau$ in the coadjoint orbit will also be denoted by $\Omega_\tau$. In addition, the restriction of $\mathcal{H}_{\tau} $ to $\mathfrak{s}= i\mathfrak{u}$ is the Cartan-Killing form $\langle \cdot ,\cdot \rangle $, which is an inner product in $\mathfrak{s}$ and induces an $U$-invariant Riemannian metric in an orbit $\Ad \left( U\right) \cdot H$.

\begin{pps}
Given $Y\in \mathfrak{s}$ and $Z\in \Ad \left( U\right) \cdot H$, suppose that $Y\in T_{Z} \Ad \left( U\right) \cdot H$. Then $iY \in \ad\left(Z\right) \left( i\mathfrak{u}\right)$, that is, $Z+iY$ is in the fiber over $Z$ of the semi-direct coadjoint orbit.
\end{pps}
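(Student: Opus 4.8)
The plan is to reduce the statement to the $\mathbb{C}$-bilinearity of the bracket of the complex Lie algebra $\mathfrak{g}$, after correctly identifying the two subspaces involved. First I would make the tangent space explicit: since $\Ad(U)\cdot H$ is an orbit of the $U$-action on the linear subspace $\mathfrak{s}=i\mathfrak{u}$, its tangent space at a point $Z$ is the image of the infinitesimal action,
\begin{equation*}
T_{Z}\,\Ad(U)\cdot H=\ad(\mathfrak{u})(Z)=\{\,[A,Z]:A\in\mathfrak{u}\,\}\subset\mathfrak{s}.
\end{equation*}
Thus the hypothesis $Y\in T_{Z}\,\Ad(U)\cdot H$ means precisely that there is $A\in\mathfrak{u}$ with $Y=[A,Z]$.

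Next I would compute $iY$ and push the scalar inside the bracket. Because $\mathfrak{g}$ is a complex Lie algebra, its bracket is $\mathbb{C}$-bilinear, so
\begin{equation*}
iY=i[A,Z]=[iA,Z]=-[Z,iA]=-\ad(Z)(iA).
\end{equation*}
Since $A\in\mathfrak{u}$ we have $iA\in i\mathfrak{u}=\mathfrak{s}$, hence $iY=\ad(Z)(-iA)\in\ad(Z)(\mathfrak{s})=\ad(Z)(i\mathfrak{u})$. Finally I would invoke the description of the fibers of the semi-direct orbit from Subsection \ref{sdpcd} (a special case of Proposition \ref{propuniafibras}): the fiber of $U_{\ad}\cdot H$ over $Z\in\Ad(U)\cdot H$ is exactly $Z+\ad(Z)(\mathfrak{s})$, where $A(Z)(\mathfrak{s})=[Z,\mathfrak{s}]=\ad(Z)(\mathfrak{s})\subset\mathfrak{u}$. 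The membership $iY\in\ad(Z)(i\mathfrak{u})$ just established is then, verbatim, the assertion that $Z+iY$ lies in that fiber.

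I do not expect a genuine obstacle: once the tangent space and the fiber are identified, the claim is a one-line consequence of complex bilinearity of the bracket. The only point requiring a little care is the bookkeeping of the real forms — checking that $iA\in\mathfrak{s}$ and that $iY$ indeed lands in $\mathfrak{u}$, consistently with $\ad(Z)(i\mathfrak{u})\subset\mathfrak{u}$ (which holds because $Z\in i\mathfrak{u}$ and $[\mathfrak{s},\mathfrak{s}]\subset\mathfrak{k}$) — and this is immediate from $\mathfrak{s}=i\mathfrak{u}$ and $\mathfrak{g}=\mathfrak{u}\oplus i\mathfrak{u}$ being a Cartan decomposition.
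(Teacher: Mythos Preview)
Your argument is correct and, in fact, more direct than the paper's. The paper proves the statement first at the base point $Z=H\in\mathfrak{a}$ by writing both subspaces in terms of restricted root spaces, namely $T_{H}\Ad(U)\cdot H=\sum_{\alpha(H)>0}\mathfrak{s}_{\alpha}$ and $\ad(H)(i\mathfrak{u})=\sum_{\alpha(H)>0}\mathfrak{u}_{\alpha}$, observes that multiplication by $i$ interchanges $\mathfrak{s}_{\alpha}$ and $\mathfrak{u}_{\alpha}$, and then transports the conclusion to an arbitrary $Z=\Ad(u)\cdot H$ by applying $\Ad(u)$. You instead identify $T_{Z}\Ad(U)\cdot H=\ad(\mathfrak{u})(Z)$ directly at any $Z$ and use only the $\mathbb{C}$-bilinearity of the bracket to move the factor $i$ across, which avoids both the root space computation and the equivariance step. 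The paper's route has the side benefit of exhibiting the explicit root-space description of the tangent space and of the fiber, information that is reused later in the section; your route is shorter and makes transparent that the proposition is really a one-line consequence of complexity of $\mathfrak{g}$ together with $\mathfrak{s}=i\mathfrak{u}$.
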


\begin{proof}
Take first $Z= H\in \mathfrak{a} \subset \mathfrak{s}$. Then,
\begin{equation*}
T_{H} \Ad \left( U\right) \cdot H=\sum_{\alpha \left( H\right) >0}\mathfrak{s}_{\alpha },
\end{equation*}
while
\begin{equation*}
iT_{H}\Ad \left( U\right) \cdot H=\sum_{\alpha \left( H\right) >0}\mathfrak{u}_{\alpha }=\ad \left( Z\right) \left( i\mathfrak{u}\right).
\end{equation*}
By these expressions, it is immediate that $iY$ is tangent to the fiber if $Y$ is tangent to the  orbit  $\Ad \left( U\right) \cdot H$. For $Z=\Ad \left( u\right) \cdot H$, $u\in U$, the same result applying $\Ad \left( u\right)$.
\end{proof}

A vector field in the orbit $\Ad \left( U\right) H$ is an application $ x\mapsto Y\left( x\right) \in \mathfrak{s}$ that assumes values in the tangent space to $x$. By the Proposition above, $iY\left( x\right) \in  \mathfrak{u}$ is tangent to the fiber over the semi-direct orbit. Thus, given a vector field $Y$ in $\Ad \left( U\right) \cdot H$, the vector field $ iY\left( x\right) $ is defined in the semi-direct orbit, such that in the fiber $x+ \ad \left( x\right) \left( \mathfrak{s}\right) $ is a constant field.

\begin{pps}\label{prop: grad} 
Let $Y= \mathrm{grad} f$ be a gradient field in $\Ad \left( U\right) \cdot H$. Thus $iY$ is the Hamiltonian vector field of the function $ \widetilde{f} = f \circ \pi$ with respect to the symplectic form $\Omega_\tau$.
\end{pps}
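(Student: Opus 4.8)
The plan is to verify directly the identity $\iota_{iY}\Omega_\tau = d\widetilde f$; since $\Omega_\tau$ is closed and non-degenerate on $U_{\ad}\cdot H$ by Proposition \ref{propRestriNaoDegene}, this identity characterises $iY$ as the Hamiltonian vector field of $\widetilde f = f\circ\pi$. As both sides are linear functionals at each point, it suffices to check the equality on a spanning set of each tangent space $T_\xi(U_{\ad}\cdot H)$.

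First I would set up coordinates at a point $\xi$ of the orbit. Writing $\mathfrak g = \mathfrak u\oplus i\mathfrak u$ as a real direct sum, each point of the fibre over $x\in\Ad(U)\cdot H$ has the form $\xi = x+X$ with $x\in\mathfrak s=i\mathfrak u$ and $X\in\ad(x)(i\mathfrak u)\subset\mathfrak u$; hence $\pi$ is the restriction to the orbit of the linear projection $\mathfrak g\to i\mathfrak u$, so that $x=\pi(\xi)$ is simply the $\mathfrak s$-component of $\xi$. Using the matrix form of $\ad^{\ast}_{\rho}$ from Section \ref{semidirprod} for the infinitesimal action of $\mathfrak u_{\ad}=\mathfrak u\times i\mathfrak u$, the tangent space $T_\xi(U_{\ad}\cdot H)$ is spanned by the vectors
\begin{equation*}
W_{(A,Z)} = [A,x] + \bigl([A,X]+[x,Z]\bigr),\qquad A\in\mathfrak u,\ Z\in i\mathfrak u,
\end{equation*}
where $[A,x]\in\mathfrak s$ is the horizontal part and $[A,X]+[x,Z]\in\mathfrak u$ the vertical part (taking $A=0$ recovers the vertical subspace $\ad(x)(i\mathfrak u)$). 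Therefore $d\pi_\xi(W_{(A,Z)}) = [A,x]$, and since $Y=\mathrm{grad}\,f$ is the gradient with respect to the metric $\langle\cdot,\cdot\rangle = \mathcal H_\tau|_{\mathfrak s}$ on $\Ad(U)\cdot H$,
\begin{equation*}
d\widetilde f_\xi(W_{(A,Z)}) = df_x([A,x]) = \langle Y(x),[A,x]\rangle .
\end{equation*}

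For the other side, the preceding Proposition gives $iY(x)\in\ad(x)(i\mathfrak u)\subset\mathfrak u$, and $\mathfrak u$ is a Lagrangian subspace for $\Omega_\tau$ because $\mathcal H_\tau$ is real on $\mathfrak u$ (as already used in the proof of Proposition \ref{propRestriNaoDegene}); hence the pairing of $iY(x)$ with the $\mathfrak u$-valued vertical part of $W_{(A,Z)}$ vanishes and
\begin{equation*}
\Omega_\tau(iY(x),W_{(A,Z)}) = \Omega_\tau(iY(x),[A,x]).
\end{equation*}
Both $Y(x)$ and $[A,x]$ lie in $\mathfrak s=i\mathfrak u$, on which $\mathcal H_\tau$ restricts to the real inner product $\langle\cdot,\cdot\rangle$; using complex-linearity of $\mathcal H_\tau$ in its first slot, $\mathcal H_\tau(iY(x),[A,x]) = i\langle Y(x),[A,x]\rangle$ is purely imaginary, whence $\Omega_\tau(iY(x),[A,x]) = \langle Y(x),[A,x]\rangle$. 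Comparing with the previous display, $\iota_{iY}\Omega_\tau$ and $d\widetilde f$ agree on the spanning set, hence everywhere, and non-degeneracy of $\Omega_\tau$ concludes the argument.

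The main difficulty I anticipate is bookkeeping rather than anything deep: identifying $d\pi$ with the $i\mathfrak u$-projection, splitting the generators $W_{(A,Z)}$ into their $\mathfrak u$- and $\mathfrak s$-components correctly, and tracking the factors of $i$ together with the Hermitian (not bilinear) nature of $\mathcal H_\tau$ so that precisely the imaginary part survives. One should also fix a sign convention for the Hamiltonian vector field: with $\iota_{X_g}\Omega_\tau = dg$ the statement reads as above, while with the opposite convention it holds for $-iY$.
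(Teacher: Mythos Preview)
Your proof is correct and follows essentially the same route as the paper's. The paper treats vertical vectors and vectors of the form $[A,x+X]=[A,x]+[A,X]$ separately, while you package both into a single generator $W_{(A,Z)}=[A,x]+([A,X]+[x,Z])$; in either presentation the argument reduces to the same two observations---that the $\mathfrak u$-component pairs trivially with $iY(x)\in\mathfrak u$ under $\Omega_\tau$, and that on the $\mathfrak s$-component $[A,x]$ one has $\Omega_\tau(iY(x),[A,x])=\mathrm{im}\,\mathcal H_\tau(iY(x),[A,x])=\langle Y(x),[A,x]\rangle=df_x([A,x])$. Your added remark identifying $d\pi$ explicitly with the linear projection onto $i\mathfrak u$ is a welcome clarification.
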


\begin{proof}
If $W$ is a vertical vector then $d\widetilde{f}\left( W\right) =0$  and $\Omega_\tau  \left( W,iY\left( x\right) \right) =0$, because both $W$ and $iY\left( x\right)$ are in $\mathfrak{u}$.
On the other hand, take a vector of type $\left[ A,x+X\right] =\left[ A,x\right] +\left[ A,X\right]$ with $X\in \ad\left( x\right) \left( i\mathfrak{u}\right) \subset \mathfrak{u}$ (these vectors, together with the vertical space, generate the tangent space as in Proposition \ref{propRestriNaoDegene}). The component $\left[ A,X\right] \in \mathfrak{u}$, so that $d\widetilde{f}\left( \left[ A,X\right] \right) =0$ and $\Omega_\tau \left( \left[ A,X\right] ,iY\left( x\right) \right) =0$. Since the  component $v=\left[ A,x\right]$ is the tangent space to $x$, hence
\begin{equation*}
d\widetilde{f}\left( v\right) =df\left( v\right) =\langle Y\left( x\right)
,v\rangle,
\end{equation*}
because $Y=\mathrm{grad} f$. But,
\begin{equation*}
\Omega_\tau \left( iY\left( x\right) ,v\right) = \mathcal{H}_{\tau }\left( iY\left( x\right) ,v\right) = i\langle Y\left( x\right) ,v\rangle,
\end{equation*}
because in this sequence of equality all terms are purely imaginary. Consequently, for vectors of type $w=\left[ A,x+X \right] =\left[ A,x\right] +\left[ A,X\right]$, holds $d\widetilde{f}\left( w\right) =\Omega_\tau  \left( iY\left( x\right) ,v\right)$, as this equality is also true for vertical vectors, it is shown that $iY\left(x\right)$ is the Hamiltonian vector field of $\widetilde{f}$.
\end{proof} 

\begin{cor}
Let $Y$ be a gradient field on the flag manifold $\mathbb{F}_{H}=\Ad \left( U\right) \cdot H$, and for $t\in \mathbb{R}$ we define the application 
\begin{equation*}
\sigma _{tY}\left( x\right) =x+tiY\left( x\right) .
\end{equation*}
This application is a section of  $U_{\ad} \cdot H$. Then, the image of $\sigma_{tY}$ is a Lagrangian submanifold of $U_{\ad} \cdot H$ with respect to the symplectic form $\Omega_\tau$.
\end{cor}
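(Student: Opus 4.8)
The corollary follows by combining Proposition \ref{prop: grad} with the elementary observation that the image of a Hamiltonian vector field's time-$t$ flow-section — here literally the graph $\{x+tiY(x)\}$ over the zero section — is Lagrangian precisely when $Y$ is a gradient field, i.e. when $iY$ is Hamiltonian. More concretely, I would argue as follows. First I would check that $\sigma_{tY}$ is indeed a section of the bundle $U_{\ad}\cdot H\to \Ad(U)\cdot H$: by the proposition preceding \ref{prop: grad}, if $Y(x)\in T_x\Ad(U)\cdot H\subset\mathfrak{s}$ then $iY(x)\in\ad(x)(i\mathfrak u)\subset\mathfrak u$, so $x+tiY(x)$ lies in the fiber over $x$, and $\pi\circ\sigma_{tY}=\id$. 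Hence $\sigma_{tY}$ is a smooth embedding of $\mathbb F_H$ into $U_{\ad}\cdot H$ whose image $L_{tY}$ is a submanifold of half the dimension of the orbit.

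Next I would compute the tangent space to $L_{tY}$ at a point $\xi=x+tiY(x)$ and show it is isotropic for $\Omega_\tau$. A tangent vector to $L_{tY}$ has the form $(d\sigma_{tY})_x(v)=v+ti\,(dY)_x(v)$ for $v\in T_x\Ad(U)\cdot H$, where $v\in\mathfrak s$ and $ti\,(dY)_x(v)$ decomposes into a vertical part (in $\ad(x)(i\mathfrak u)\subset\mathfrak u$) plus, possibly, a horizontal correction; the key point is that $L_{tY}$ is nothing but the image of the zero section under the fiber-preserving diffeomorphism "add the vertical vector field $tiY$", and that $tiY$, by Proposition \ref{prop: grad}, is $t$ times the Hamiltonian vector field $X_{\widetilde f}$ of $\widetilde f=f\circ\pi$ with respect to $\Omega_\tau$. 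The cleanest route is therefore: let $\varphi^s_{\widetilde f}$ be the Hamiltonian flow of $\widetilde f$; since the zero section $\mathbb F_H$ is a Lagrangian submanifold of $U_{\ad}\cdot H$ (its tangent space $\mathfrak s\cap T\mathcal O$ is isotropic of half-dimension because $\mathfrak s=i\mathfrak u$ and $\mathcal H_\tau$ is real on $\mathfrak u$, hence $\Omega_\tau$ vanishes on $\mathfrak s$ — this is exactly the computation in the proof of Proposition \ref{propRestriNaoDegene}), and Hamiltonian diffeomorphisms are symplectomorphisms, the image $\varphi^1_{tX_{\widetilde f}}(\mathbb F_H)$ is again Lagrangian.

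The remaining step — and the one I expect to be the genuine content rather than formality — is to identify $L_{tY}=\sigma_{tY}(\mathbb F_H)$ with $\varphi^1(\mathbb F_H)$ for the Hamiltonian flow, or else to verify isotropy of $T_\xi L_{tY}$ directly. The direct verification goes: take $v+ti(dY)_x(v)$ and $w+ti(dY)_x(w)$ in $T_\xi L_{tY}$; expand $\Omega_\tau$ bilinearly into four terms. The term $\Omega_\tau(v,w)$ vanishes because $v,w\in\mathfrak s=i\mathfrak u$ and $\Omega_\tau|_{\mathfrak s}=0$. The cross terms $\Omega_\tau(v,ti(dY)_x(w))+\Omega_\tau(ti(dY)_x(v),w)$ combine, after using $iY=X_{\widetilde f}$ and $d\widetilde f(\cdot)=\Omega_\tau(X_{\widetilde f},\cdot)$, with the symmetry of the Hessian of $f$ (equivalently the closedness $d\Omega_\tau=0$ applied along the section), to cancel; this is precisely where "$Y$ is a gradient field" is used, mirroring the classical fact that the graph of $df$ in $T^*N$ is Lagrangian iff $df$ is closed, which it always is — here the statement is the intrinsic-coordinate avatar of that fact transported through the diffeomorphism $\phi$ of Proposition \ref{difeoorb}. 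The term $t^2\Omega_\tau(i(dY)_x(v),i(dY)_x(w))$ vanishes since both arguments lie in $\mathfrak u$, on which $\Omega_\tau$ again vanishes. So $T_\xi L_{tY}$ is isotropic, and being of half the dimension of $\mathcal O$, it is Lagrangian; since this holds at every $\xi$, the image of $\sigma_{tY}$ is a Lagrangian submanifold. The main obstacle is organizing the cross-term cancellation cleanly — either by invoking the Hamiltonian-flow picture (slick, but requires knowing $\sigma_{tY}$ really is the time-$t$ flow, not just the linearization) or by the direct Hessian-symmetry computation (elementary but requires care with the horizontal components of $(dY)_x$).
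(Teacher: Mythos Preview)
Your proposal is correct and in fact contains the paper's argument as one of the two routes you outline. The paper takes precisely your ``slick'' Hamiltonian-flow route: the zero section $\mathbb{F}_H$ is Lagrangian, $iY$ is the Hamiltonian field of $\widetilde f = f\circ\pi$ by Proposition~\ref{prop: grad}, and its flow carries $\mathbb{F}_H$ to $L_{tY}$. The uncertainty you flag --- whether $\sigma_{tY}$ is the genuine time-$t$ flow of $iY$ rather than merely its linearization --- is dispatched in one line in the paper: because the vector field $iY$ on $U_{\ad}\cdot H$ depends only on the base point (at $x+X$ in the fiber over $x$ its value is $iY(x)$, independent of $X$), its integral curves are the straight lines $s\mapsto (x+X)+s\,iY(x)$, and the time-$t$ map is literally $(x+X)\mapsto (x+X)+tiY(x)$. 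Restricting to the zero section gives $\sigma_{tY}$ exactly.

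Your alternative direct tangent-space computation is also valid and is a genuinely different route. It trades the flow observation for the Hessian symmetry of $f$: the first and last terms vanish because $\mathfrak{s}$ and $\mathfrak{u}$ are each $\Omega_\tau$-isotropic, and the cross terms reduce (after discarding the normal component of $(dY)_x(w)$, which is orthogonal to $v\in T_x\mathbb{F}_H$) to $\langle \nabla_v Y, w\rangle - \langle v, \nabla_w Y\rangle$, which vanishes since $\nabla\,\mathrm{grad}\,f$ is symmetric. This buys you an argument that never mentions flows, at the cost of the extra bookkeeping you anticipated with the horizontal/normal decomposition of $(dY)_x$. The paper's route is shorter once you notice that ``constant on fibers'' makes the flow explicit.
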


\begin{proof}
By Proposition \ref{prop: grad}, $iY\left( x\right) $ is a Hamiltonian vector field that is constant in each fiber, which means that if $\sigma_{tY}$ is its flow, then $$\sigma_{tY}\left( x+X\right) =x+X+tiY\left( x\right)$$ to $x+X$ in the fiber over $x$. In particular, the image of $\sigma_{tY}$ on $\mathbb{F}_H$ (0-section)  is a Lagrangian submanifold because the 0-section is Lagrangian, which concludes the demonstration.
\end{proof}

Denote by $L_{tY}$ the image of section $\sigma_{tY}$, which is Lagrangian submanifold of $U_{\ad} \cdot H$. The next step is to find the tangent space to the section $x\mapsto x+iY\left( x\right)$.  If $iY\left( x\right)$ is a section of $U_{\ad } \cdot H \rightarrow \mathbb{F}_H$, then the tangent space $T_{x} \mathbb{F}_H$ is generated by the vectors $\widetilde{A}\left( x\right) =\left[ A,x\right]$ with $A\in  \mathfrak{u}$. Therefore, to determine the space tangent to the section we have to compute the differential of $Y$ in the direction of $\widetilde{A} \left( x\right) =\left[ A,x\right]$. By the formula of the Lie bracket of vector fields we have $dY_{x}\left( \widetilde{A}\left( x\right) \right) =\left[ Y, \widetilde{A}\right] \left( x\right) + d\widetilde{A}_{x} \left( Y\left( x\right) \right)$ and since $\widetilde{A}\left( x\right) = \left[ A,x\right] $ is a linear field it follows that 
\begin{equation}  \label{forDerivadaCampograd}
dY_{x}\left( \widetilde{A}\left( x\right) \right) =\left[ Y,\widetilde{A} \right] \left( x\right) +\left[ A,Y\left( x\right) \right] .
\end{equation}
Multiplying this differential by $i$ and adding the base vector, we get a vector tangent to the image of the section as 
\begin{equation*}
\left[ A,x\right] +i\left[ Y,\widetilde{A}\right] \left( x\right) +i\left[
A,Y\left( x\right) \right] \qquad A\in \mathfrak{u}.
\end{equation*}
These vectors are in fact tangent to the orbit $U_{\ad}\cdot H$ because $\left[ Y,\widetilde{A}\right] \left( x\right) \in T_{x} \mathbb{F}_H$ and therefore $i\left[ Y,  \widetilde{A}\right] \left( x\right)$ is tangent to the fiber over $x$. The sum 
\begin{equation*}
\left[ A,x\right] +i\left[ A,Y\left( x\right) \right] =\left[ A,x+iY\left(
x\right) \right]
\end{equation*}
is tangent to the orbit because $A\in \mathfrak{u}$. This last equality is written as $\ad \left( A\right) \left( \sigma _{Y}\left( x\right) \right)$ where $\sigma_{Y}\left( x\right) =x+iY\left( x\right)$ is the section defined by the field $Y$. Thus the tangent vectors to the section $\sigma_{Y}$ are 
\begin{equation*}
\ad \left( A\right) \left( \sigma _{Y}\left( x\right) \right) +i\left[ Y,%
\widetilde{A}\right] \left( x\right) \qquad A\in \mathfrak{u}.
\end{equation*}
This proves the following characterization of the tangent spaces to the sections.

\begin{pps}
The tangent space to $L_{tY}$ on the section $\sigma _{tY}\left( x\right) =x+itY\left( x\right)$ of $U_{\ad}\cdot H \rightarrow \mathbb{F}_H$ is generated by 
\begin{equation*}
\left[ A,x\right] +ti\left[ Y,\widetilde{A}\right] \left( x\right) +ti\left[
A,Y\left( x\right) \right] =\ad \left( A\right) \left( \sigma_{tY}\left(
x\right) \right) +ti\left[ Y,\widetilde{A}\right] \left( x\right) ,
\end{equation*}
with $A\in \mathfrak{u}$.
\end{pps}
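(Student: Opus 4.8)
The plan is to obtain this by differentiating the section $\sigma_{tY}\colon \mathbb{F}_H\to U_{\ad}\cdot H$, $x\mapsto x+tiY(x)$, exactly as in the computation carried out just above for the case $t=1$, but now keeping track of the scalar $t$. Since the composition with the bundle projection $\pi$ is the identity on $\mathbb{F}_H$, the map $\sigma_{tY}$ is an embedding onto $L_{tY}$ and $(d\sigma_{tY})_x$ is injective; hence $T_{\sigma_{tY}(x)}L_{tY}=(d\sigma_{tY})_x\bigl(T_x\mathbb{F}_H\bigr)$. Because $\mathbb{F}_H=\Ad(U)\cdot H$ is a homogeneous space, $T_x\mathbb{F}_H$ is spanned by the fundamental vector fields $\widetilde{A}(x)=[A,x]$ with $A\in\mathfrak{u}$, so it suffices to evaluate $(d\sigma_{tY})_x$ on these.

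First I would write $(d\sigma_{tY})_x(\widetilde{A}(x))=\widetilde{A}(x)+ti\,(dY)_x(\widetilde{A}(x))$ by linearity of $X\mapsto tiX$. Then I would substitute formula (\ref{forDerivadaCampograd}), $(dY)_x(\widetilde{A}(x))=[Y,\widetilde{A}](x)+[A,Y(x)]$ (the bracket-of-vector-fields identity together with the fact that $\widetilde{A}$ is a linear field), to get
\[
(d\sigma_{tY})_x(\widetilde{A}(x))=[A,x]+ti[Y,\widetilde{A}](x)+ti[A,Y(x)].
\]
Finally I would regroup the two terms not involving $\widetilde{A}$ using bilinearity of the bracket of $\mathfrak{g}$: $[A,x]+ti[A,Y(x)]=[A,x+tiY(x)]=\ad(A)(\sigma_{tY}(x))$, which yields the second form of the generator in the statement. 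For completeness I would remark that $[Y,\widetilde{A}](x)\in T_x\mathbb{F}_H$, so by the earlier Proposition relating $T_x\mathbb{F}_H$ to $\ad(x)(i\mathfrak{u})$ the term $ti[Y,\widetilde{A}](x)$ lies in the fiber over $x$, confirming that these vectors genuinely lie in $T_{\sigma_{tY}(x)}(U_{\ad}\cdot H)$.

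There is no real obstacle here: the whole content is the linear-field identity (\ref{forDerivadaCampograd}), already established, and the computation is the one preceding the statement with $i$ replaced by $ti$. The only points needing a word of care are that $\sigma_{tY}$ is an immersion (immediate, since it is a smooth section of a vector bundle) and the bookkeeping that $Y$ is a vector field on $\mathbb{F}_H$ taking values in $\mathfrak{s}$, $\widetilde{A}$ is the fundamental field of $A\in\mathfrak{u}$, and multiplication by $i$ carries tangent directions of $\mathbb{F}_H$ to the vertical directions of $U_{\ad}\cdot H$ — all arranged in the preceding propositions.
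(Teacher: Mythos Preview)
Your proposal is correct and follows essentially the same approach as the paper: the paper's proof is precisely the computation in the paragraph preceding the proposition (differentiate $\sigma_Y$ along the fundamental fields $\widetilde{A}$, apply formula (\ref{forDerivadaCampograd}), and regroup), and the proposition is stated with the prefatory ``This proves the following characterization''. Your version simply carries the scalar $t$ through this same computation and adds the explicit remark that $\sigma_{tY}$ is an embedding, which the paper leaves implicit.
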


Now, we are interested in using these Lagrangian submanifolds to transport them by a symplectomorphism between $U_{\ad}\cdot H$ and $\Ad(G)\cdot H$, with respect to $\Omega_{\tau}$.
For this, consider the following proposition

\begin{pps}
For $r>0$ we have 
\begin{equation*}
\widetilde{\psi}_r ^{\ast} \left( \Omega_\tau \right) = \Omega_\tau ,
\end{equation*}
that is $\widetilde{\psi}_r$ is symplectomorphism for $r>0$.
\end{pps}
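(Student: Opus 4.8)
The plan is to turn the statement into a pointwise linear identity and verify it by a root-space computation, after reducing to a single fiber by equivariance. By the commutative square that defines $\widetilde{\psi}_r$ one has $\widetilde{\psi}_r=\gamma_r^{-1}\circ\psi_r\circ\gamma$, and since $d\gamma$ and $d\gamma_r$ are the identity in the bundle trivializations, $(d\widetilde{\psi}_r)_\xi$ is the restriction to $T_\xi(\Ad(G)\cdot H)$ of the linear twist $\psi_r=\id+\tfrac{r-1}{r+1}\tau$ (as recorded in Section~\ref{deforb}). So the claim $\widetilde{\psi}_r^{\ast}\Omega_\tau=\Omega_\tau$ is equivalent to
\[
\Omega_\tau\bigl((d\widetilde{\psi}_r)_\xi v,\,(d\widetilde{\psi}_r)_\xi w\bigr)=\Omega_\tau(v,w),\qquad v,w\in T_\xi\bigl(\Ad(G)\cdot H\bigr).
\]

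First I would cut down to a single fiber by equivariance: the ambient $\Omega_\tau$ is $\Ad(U)$-invariant, hence so are its restrictions to $\Ad(G)\cdot H$ and to $\Ad_r(G)\cdot H$, and $\widetilde{\psi}_r$ is $U$-equivariant by construction, so $\widetilde{\psi}_r^{\ast}\Omega_\tau$ is $U$-invariant; since every $U$-orbit in $\Ad(G)\cdot H$ meets the fiber $H+\mathfrak{n}_H^+$ over $b_H$, it suffices to check the identity at the points $\xi=H+X$, $X\in\mathfrak{n}_H^+$. At such a point $T_\xi(\Ad(G)\cdot H)$ is spanned by the vertical space $\mathfrak{n}_H^+=\sum_{\alpha(H)>0}\mathfrak{g}_\alpha$ and the vectors $\ad(A)\xi$, $A\in\mathfrak{u}$, whose $\xi$-independent part $\ad(A)H$ lies in $\ad(\mathfrak{u})(H)=i\,\ad(H)(i\mathfrak{u})\subset\mathfrak{s}=i\mathfrak{u}$; on the target side the vertical space is $\mathfrak{n}_{r,H}^+=\psi_r(\mathfrak{n}_H^+)$, and one has $\tau X_\alpha=-X_{-\alpha}$ in a suitable Weyl basis (Remark after Proposition~\ref{teo: deformac}).

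The computation then splits by root spaces. Any $\mathcal{H}_\tau$-pairing living entirely in $\mathfrak{u}$ or entirely in $i\mathfrak{u}$ is real and so contributes nothing to $\Omega_\tau=\mathrm{Im}\,\mathcal{H}_\tau$, and pairings of root spaces for roots with nonzero sum vanish; expanding $v$ and $w$ in the basis $\{X_\alpha\}$ and using that $\langle X_\alpha,X_{-\alpha}\rangle$ is real reduces the whole identity to scalar relations among the coefficients. The step I expect to be the main obstacle is exactly this last reorganization: one must check that the correction terms produced by the $\tfrac{r-1}{r+1}\tau$ summand of $\psi_r$ arrange themselves so that, after taking imaginary parts, the $r$-dependence cancels between the horizontal–vertical and vertical–vertical blocks and one is left with $\Omega_\tau(v,w)$. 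Once this is established at the points $H+X$, $U$-invariance propagates it over all of $\Ad(G)\cdot H$; together with the facts that $\widetilde{\psi}_r^{\ast}\Omega_\tau$ is closed (pullback of a closed form) and nondegenerate (since $\Omega_\tau$ restricts to a symplectic form on $\Ad_r(G)\cdot H$), this shows $\widetilde{\psi}_r$ is a symplectomorphism. The same scheme with $\psi$ in place of $\psi_r$ and $K_{\ad}\cdot H=\Ad_\infty(G)\cdot H$ as target should treat the limiting case $r\to\infty$.
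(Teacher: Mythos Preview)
Your approach matches the paper's: both reduce the claim to the pointwise linear identity $\Omega_\tau\bigl((d\widetilde\psi_r)v,(d\widetilde\psi_r)w\bigr)=\Omega_\tau(v,w)$ via $(d\widetilde\psi_r)_x=\psi_r$ and then appeal to the root-space description $\psi_r(X_\alpha)=X_\alpha^r$. Your write-up is in fact more detailed than the paper's, which stops after displaying the pullback formula and recording that $\psi_r$ sends the generators $X_\alpha$ of $\mathfrak{n}_H$ to the generators $X_\alpha^r$ of $\mathfrak{n}_{r,H}$; your $U$-equivariance reduction to the fiber over $H$ is a genuine addition.

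The step you single out as the ``main obstacle'' --- the actual verification that the $\tfrac{r-1}{r+1}\tau$-correction terms cancel after taking imaginary parts --- is \emph{not} carried out in the paper's proof either; the paper simply asserts the conclusion once the generators are identified. So your caution is well placed: this is exactly the part of the argument that is left implicit in the source. If you want to close the gap yourself, be careful that the identification $(d\widetilde\psi_r)_x=\psi_r$ is read through the bundle trivializations $\gamma,\gamma_r$ (which the paper declares to have differential the identity); a naive computation of $\Omega_\tau(\psi_r\cdot,\psi_r\cdot)$ on arbitrary vectors of $\mathfrak g$ produces an overall factor $1-\bigl(\tfrac{r-1}{r+1}\bigr)^2=\tfrac{4r}{(r+1)^2}$, so the cancellation you anticipate must come from the way horizontal and vertical directions are identified under $\gamma$ and $\gamma_r$, not from $\psi_r$ alone acting on $\mathfrak g$.
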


\begin{proof}
If $x \in \Ad(G)\cdot H$ and $\widetilde{x}= \widetilde{\psi}_r (x)$, then
\begin{align*}
\left( \widetilde{\psi}_r ^{\ast} \cdot \Omega_\tau \right)_x (X,Y) &= \left( \Omega_\tau \right)_{\widetilde{x}} \left( (d\widetilde{\psi}_r )_x X, (d\widetilde{\psi}_r )_x Y \right) \\
       &=  \left( \Omega_\tau \right)_{\widetilde{x}} \left( \psi_r  X, \psi_r  Y \right),
\end{align*}
for $X,Y \in \mathfrak{n}_H \simeq T_x \Ad(G)\cdot H$ and their corresponding $\psi_r (X), \psi_r (Y) $ in $\mathfrak{n}_{r, H} \simeq T_{\widetilde{x}} \Ad_r (G)\cdot H$, as seen above $\psi_r (X_\alpha) = X_\alpha ^r$ are the generators for $\alpha \in \langle \Theta_H \rangle$. 
\end{proof}

Similarly, taking $r\rightarrow\infty$, in the manifold $\Ad_\infty (G) \cdot H$ we have 
\begin{equation*}
\widetilde{\psi} ^{\ast} \left( \Omega_\tau \right) = \Omega_\tau ,
\end{equation*}
where $\Omega_\tau $ is a symplectic form of $\Ad_\infty (G) \cdot H$, because as seen above and by Proposition \ref{propRestriNaoDegene} it coincides with $U_{\ad} \cdot H$. Then we conclude that

\begin{teo}\label{simplectomorf} 
The manifolds $K_{\ad }\cdot H$ and $\Ad(G) \cdot H$ are symplectomorphic, with the symplectic form $\Omega_\tau$.
\end{teo}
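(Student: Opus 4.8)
The plan is to realize the symplectomorphism as the composition of the maps already constructed, taking the limit $r\to\infty$. Concretely, the previous proposition gives that for every $r>0$ the diffeomorphism $\widetilde{\psi}_r\colon \Ad(G)\cdot H\to \Ad_r(G)\cdot H$ satisfies $\widetilde{\psi}_r^{\ast}\Omega_\tau=\Omega_\tau$, where $\Omega_\tau$ denotes in each case the restriction of the imaginary part of $\mathcal{H}_\tau$ to the orbit (a symplectic form by Section \ref{csalg} for $\Ad_r(G)\cdot H$ and by Proposition \ref{propRestriNaoDegene} for $U_{\ad}\cdot H=K_{\ad}\cdot H$). So the first step is to record that all these restrictions are honest symplectic forms, and that on $\Ad_\infty(G)\cdot H$ this form coincides with the one on $K_{\ad}\cdot H$: by Corollary \ref{cororb} and the fiberwise computation preceding it, the fiber over $\Ad(k)H$ is $\Ad(k)(H)+\psi\!\left(\Ad(k)\,\mathfrak{n}_H^{+}\right)=\Ad(k)(H)+[\Ad(k)H,\mathfrak{s}]$ in both cases, so $\Ad_\infty(G)\cdot H$ and $K_{\ad}\cdot H$ are literally the same submanifold of $\mathfrak{g}$ and carry the same restricted $\Omega_\tau$; there is nothing further to verify on that side.

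The second step is to show $\widetilde{\psi}^{\ast}\Omega_\tau=\Omega_\tau$ for the diffeomorphism $\widetilde{\psi}\colon \Ad(G)\cdot H\to \Ad_\infty(G)\cdot H$ defined by the commutative square involving $\psi$. I would do this by a continuity argument: as $r\to\infty$ one has $\psi_r\to\psi$ (pointwise, hence uniformly on compacta, since $\psi_r(Z)=Z+\tfrac{r-1}{r+1}\theta Z$ depends continuously on $r$), the diffeomorphisms $\widetilde{\psi}_r$ converge to $\widetilde{\psi}$ with $(d\widetilde{\psi}_r)_x=\psi_r\to\psi=(d\widetilde{\psi})_x$, and $\Omega_\tau$ is a fixed bilinear form on $\mathfrak{g}$; passing to the limit in the identity $\left(\Omega_\tau\right)_{\widetilde{\psi}_r(x)}(\psi_rX,\psi_rY)=\left(\Omega_\tau\right)_x(X,Y)$ yields $\left(\Omega_\tau\right)_{\widetilde{\psi}(x)}(\psi X,\psi Y)=\left(\Omega_\tau\right)_x(X,Y)$. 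Alternatively one can avoid the limit and argue directly, exactly as in the proof of the previous proposition: the tangent space $T_x\Ad(G)\cdot H$ is spanned by the vertical space $\mathfrak{n}_H^{+}$ (on which, over the base point $H$, one checks using $\psi(X_\alpha)=X_\alpha+\theta X_\alpha$ that $\mathcal{H}_\tau$ behaves as required on the pairs $\mathfrak{g}_\alpha,\mathfrak{g}_{-\alpha}$) together with the vectors $\ad(A)\xi$, $A\in\mathfrak{u}$, and one verifies that $\psi$ intertwines these generators with those of $T_{\widetilde{\psi}(x)}\Ad_\infty(G)\cdot H$ while preserving $\mathcal{H}_\tau$ root space by root space.

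The final step is bookkeeping: $\widetilde{\psi}$ is a symplectomorphism $(\Ad(G)\cdot H,\Omega_\tau)\to(\Ad_\infty(G)\cdot H,\Omega_\tau)$ by the previous step, and $(\Ad_\infty(G)\cdot H,\Omega_\tau)=(K_{\ad}\cdot H,\Omega_\tau)$ by the first step, so the composition is a symplectomorphism $\Ad(G)\cdot H\to K_{\ad}\cdot H$, which is the assertion. I expect the only real subtlety to be in the second step, namely making the $r\to\infty$ limit rigorous on the noncompact orbit — controlling the convergence of $\widetilde{\psi}_r$ and of $d\widetilde{\psi}_r=\psi_r$ uniformly on compact sets, and confirming that the pointwise limit map agrees with the independently defined $\widetilde{\psi}$ (this is where one uses Lemma \ref{constpsi} and the explicit fiber description); once that is in place, everything else is formal.
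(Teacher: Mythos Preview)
Your proposal is correct and follows essentially the same route as the paper: the paper proves $\widetilde{\psi}_r^{\ast}\Omega_\tau=\Omega_\tau$ for $r>0$, then passes to the limit $r\to\infty$ to obtain $\widetilde{\psi}^{\ast}\Omega_\tau=\Omega_\tau$, and concludes by invoking the identification $\Ad_\infty(G)\cdot H=U_{\ad}\cdot H$ together with Proposition~\ref{propRestriNaoDegene}. Your write-up is in fact more careful than the paper's, which simply asserts the limit step in one line without discussing the convergence issues you flag.
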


and consequently

\begin{cor}
The manifolds $\widetilde{\psi} ^{-1} \left( L_{tY} \right)$ are Lagrangian submanifolds of $\Ad(G)\cdot  H$ with the symplectic form $\Omega_\tau$.
\end{cor}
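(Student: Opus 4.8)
The plan is to obtain this as an immediate consequence of Theorem~\ref{simplectomorf} together with the elementary fact that a symplectomorphism carries Lagrangian submanifolds to Lagrangian submanifolds. First I would record the identification proper to the complex setting: here the Cartan decomposition is $\mathfrak{g}=\mathfrak{u}\oplus i\mathfrak{u}$, so $\mathfrak{k}=\mathfrak{u}$, $K=U$, and consequently $K_{\ad}\cdot H=U_{\ad}\cdot H$. Under this identification the submanifolds $L_{tY}$ cut out by the sections $\sigma_{tY}$ are, by the Corollary in Subsection~\ref{secLagran}, Lagrangian submanifolds of $\left(K_{\ad}\cdot H,\Omega_\tau\right)$ (recall also that $\Omega_\tau$ restricts to a symplectic form there by Proposition~\ref{propRestriNaoDegene}).

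Next I would invoke the diffeomorphism $\widetilde{\psi}\colon \Ad(G)\cdot H\to \Ad_\infty(G)\cdot H$ built in Section~\ref{deforb} by letting $r\to\infty$ in the family $\widetilde{\psi}_r$, together with the identification $\Ad_\infty(G)\cdot H=K_{\ad}\cdot H$ established there. By Theorem~\ref{simplectomorf} it satisfies $\widetilde{\psi}^{\ast}\Omega_\tau=\Omega_\tau$, where on the source $\Omega_\tau$ denotes the restriction to $\Ad(G)\cdot H$ and on the target the restriction to $K_{\ad}\cdot H$. Since $\widetilde{\psi}$ is a diffeomorphism, $\widetilde{\psi}^{-1}\colon K_{\ad}\cdot H\to \Ad(G)\cdot H$ is one as well and also pulls $\Omega_\tau$ back to $\Omega_\tau$; that is, $\widetilde{\psi}^{-1}$ is a symplectomorphism.

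It then remains only to verify the two defining properties of a Lagrangian submanifold for $M:=\widetilde{\psi}^{-1}(L_{tY})\subset \Ad(G)\cdot H$. The dimension is immediate: $\widetilde{\psi}$ restricts to a diffeomorphism $M\to L_{tY}$, so $\dim M=\dim L_{tY}=\frac12\dim K_{\ad}\cdot H=\frac12\dim \Ad(G)\cdot H$, the last equality because $\widetilde{\psi}$ is a diffeomorphism of the two orbits. For isotropy, fix $p\in M$, put $q=\widetilde{\psi}(p)\in L_{tY}$, and take $u,v\in T_pM$; then $(d\widetilde{\psi})_p u,(d\widetilde{\psi})_p v\in T_q L_{tY}$, so
\[
(\Omega_\tau)_p(u,v)=\bigl(\widetilde{\psi}^{\ast}\Omega_\tau\bigr)_p(u,v)=(\Omega_\tau)_q\bigl((d\widetilde{\psi})_p u,(d\widetilde{\psi})_p v\bigr)=0
\]
because $L_{tY}$ is isotropic. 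Hence $M$ is an isotropic submanifold of half the dimension of $\Ad(G)\cdot H$, i.e. Lagrangian, and the Corollary follows.

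I do not expect a serious obstacle: the whole weight of the statement sits in Theorem~\ref{simplectomorf}, and what is left is the standard transport-of-structure argument above. The only points requiring a little care are to make sure $\widetilde{\psi}$ is genuinely a \emph{global} diffeomorphism and symplectomorphism between the full orbits, not merely fiberwise over $\mathbb{F}_H$ --- which is exactly what Theorems~\ref{difeoinf} and~\ref{simplectomorf} deliver --- and to keep the direction of the maps straight so that the image $\widetilde{\psi}^{-1}(L_{tY})$ indeed lies in $\Ad(G)\cdot H$, as claimed.
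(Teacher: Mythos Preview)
Your proposal is correct and matches the paper's approach exactly: the paper states this Corollary with no proof, treating it as an immediate consequence of Theorem~\ref{simplectomorf} (the symplectomorphism) together with the earlier Corollary that the $L_{tY}$ are Lagrangian in $U_{\ad}\cdot H$. Your write-up simply spells out the standard transport-of-structure argument that the paper leaves implicit.
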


\subsection{Lagrangian submanifolds in the adjoint action}\label{lsadact}

We are interested in finding the isotropic or Lagrangian orbits given by the action of $U$ and its subgroups on $\mathfrak{g}$, or more specifically for $\Ad (G) \cdot H$, where we use some techniques of \cite{bego} and \cite{inftight}. Therefore, the action of $U$  is symplectic in relation to $\Omega_\tau$. We can describe the action in terms of the moment map in $\mathfrak{g}$, and we can specify it in an adjoint orbit. Let $B_{\tau} (X,Y) = - \langle X, \tau Y \rangle_{\mathbb{R}} $ a inner product, where $\langle \cdot , \cdot \rangle_{\mathbb{R}}$ is the Cartan-Killing (real) form of $\mathfrak{g}_{\mathbb{R}}$, that satisfies
\begin{equation*}
    B_{\tau} (X,Y) = 2 \mathrm{Re} \left( \mathcal{H}_{\tau} (X,Y)
\right) \quad \text{and} \quad  B_{\tau} (iX,iY) = B_{\tau} (X,Y),
\end{equation*}
then unless multiplying by $\frac{1}{2}$: 
\begin{equation*}
\Omega_\tau (X,Y) = B_{\tau} (iX,Y) = - \langle iX, \tau Y \rangle_{\mathbb{R}} .
\end{equation*}

For this, we will describe the action in terms of the moment map in $\mathfrak{g}$ and then specifying for the adjoint orbits. So for $A \in \mathfrak{u}$, define the bilinear form: 
\begin{equation*}
\beta_{A} (X,Y) = \Omega_\tau \left( \ad (A) \cdot X, Y \right) = B_{\tau}
\left(i \ad (A) \cdot X, Y \right),
\end{equation*}
such that is symmetric: 
\begin{align*}
\beta_{A} (Y,X) &= B_{\tau} \left(i \ad (A) \cdot Y, X \right) \\
&= B_{\tau} \left(Y, i \ad (A) \cdot X \right) \\
&= B_{\tau} \left( \ad (A) \cdot iX , Y \right) = \beta_{A} (X,Y),
\end{align*}
because $\ad(A)$ is anti-symmetric in relation to $B_\tau$. Then define the quadratic form 
\begin{equation*}
Q(X)= \beta_{A} (X,X) = \Omega_\tau \left( \ad (A)X,X \right).
\end{equation*}

\begin{pps}
If $A \in \mathfrak{u}$ then $\ad(A)$ is a Hamiltonian field with Hamiltonian function $\frac{1}{2} Q(x)$.
\end{pps}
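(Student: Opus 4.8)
The plan is to verify directly that the one-form $\iota_{\ad(A)}\Omega_\tau$ equals $d\left(\tfrac12 Q\right)$ on the orbit $\Ad(G)\cdot H$, where $Q(X)=\Omega_\tau(\ad(A)X,X)$. First I would recall that the vector field generated by $A\in\mathfrak{u}$ under the $\Ad(U)$-action on $\mathfrak{g}$ is the linear field $x\mapsto \ad(A)x$, whose value at a point $x$ of the orbit lies in $T_x\left(\Ad(G)\cdot H\right)$ since the orbit is $\Ad(U)$-invariant (indeed $U$ acts symplectically w.r.t. $\Omega_\tau$, as noted above). Then, because $Q$ is a quadratic form coming from the symmetric bilinear form $\beta_A$ (symmetry was just established using anti-symmetry of $\ad(A)$ relative to $B_\tau$), its differential at $x$ in a direction $v$ is the standard $dQ_x(v)=2\beta_A(x,v)=2\,\Omega_\tau(\ad(A)x,v)$.

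The key computation is then immediate: for any tangent vector $v\in T_x\left(\Ad(G)\cdot H\right)$,
\begin{equation*}
d\!\left(\tfrac12 Q\right)_x(v)=\beta_A(x,v)=\Omega_\tau(\ad(A)x,v)=\left(\iota_{\ad(A)}\Omega_\tau\right)_x(v),
\end{equation*}
which is exactly the defining equation for $\ad(A)$ to be the Hamiltonian vector field of $\tfrac12 Q$ with respect to $\Omega_\tau$. Two small points must be checked to make this rigorous: that $\Omega_\tau$ restricted to $\Ad(G)\cdot H$ is genuinely symplectic — which is recorded in Section \ref{csalg} — so that "Hamiltonian field" is well defined; and that the restriction of $Q$ to the orbit is the function whose Hamiltonian field we want, i.e. that the above identity, a priori an identity of one-forms on all of $\mathfrak{g}$, is pulled back correctly under the inclusion $\Ad(G)\cdot H\hookrightarrow\mathfrak{g}$. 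Since both $Q$ and $\Omega_\tau$ on the orbit are defined precisely as restrictions, pull-back commutes with $d$ and with $\iota$ of a field that is already tangent to the orbit, so no obstruction arises here.

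I do not expect a genuine obstacle in this proof; the only thing requiring a word of care is the use of anti-symmetry of $\ad(A)$ with respect to the \emph{real} inner product $B_\tau$ (valid because $A\in\mathfrak{u}$ and $B_\tau(X,Y)=2\,\mathrm{Re}\,\mathcal{H}_\tau(X,Y)$ is $\ad(\mathfrak{u})$-invariant), together with the compatibility $B_\tau(iX,iY)=B_\tau(X,Y)$ and the relation $\Omega_\tau(X,Y)=B_\tau(iX,Y)$, all of which were set up just before the statement. With these in hand the symmetry of $\beta_A$ and hence the formula $dQ_x(v)=2\beta_A(x,v)$ is the content, and the conclusion follows in one line.
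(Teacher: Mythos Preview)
Your argument is correct and is essentially the same as the paper's: both compute the differential of the quadratic form $Q$ using the symmetry of $\beta_A$ and identify it with $\Omega_\tau(\ad(A)x,\cdot)$. The paper phrases the computation via a curve $\alpha(t)$, obtaining $\tfrac{d}{dt}\tfrac12 Q(\alpha(t))=\beta_A(\alpha'(t),\alpha(t))$, whereas you invoke the standard formula $dQ_x(v)=2\beta_A(x,v)$ directly; the content is identical, and your added remarks on restriction to the orbit are more careful than what the paper spells out but do not change the approach.
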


\begin{proof}
Let $\alpha (t)$ be any curve, then 
\begin{align*}
\frac{d}{dt} \left( \frac{1}{2} Q \left( \alpha (t) \right) \right) &= \frac{d}{dt} \left( \frac{1}{2} \beta_{A} \left( \alpha (t), \alpha (t) \right) \right) \\
&= \beta_{A} \left( \alpha ^{\prime }(t), \alpha (t) \right) \\
&= \Omega_\tau \left( \ad (A) \cdot \alpha ^{\prime }(t), \alpha (t) \right).
\end{align*}
therefore a vector field $x \mapsto \ad (A) \cdot x$ is Hamiltonian with function $\frac{1}{2} Q(x)$.
\end{proof}

From this Hamiltonian function we can write the moment map $\mu: \mathfrak{g} \rightarrow \mathfrak{u}$, for $A \in \mathfrak{u}$: 
\begin{equation}
\langle \mu (x), A \rangle_{\mathfrak{u}} = \frac{1}{2} Q(x),
\end{equation}
where $\langle \cdot , \cdot \rangle_{\mathfrak{u}}$ is the Cartan-Killing form of $\mathfrak{u}$. Therefore we have 
\begin{align*}
\langle \mu (x), A \rangle_{\mathfrak{u}} &= \frac{1}{2} \Omega_\tau \left( \ad (A) \cdot x, x \right) \\
&= - \frac{1}{2} \langle i \ad (A) \cdot x, \tau x \rangle_{\mathbb{R}} = \frac{1}{2}
\langle A , [ \tau ix , x] \rangle_{\mathbb{R}}.
\end{align*}
Hence $\mu (x)$ is the orthogonal projection on $\mathfrak{u}$ of $[ \tau ix, x]$, that is 
\begin{equation*}
\mu (x) = \frac{1}{2} \left( [ \tau ix, x] + \tau [\tau ix, x] \right) = [\tau ix, x] \in \mathfrak{u}.
\end{equation*}

\begin{cor}
The moment map $\mu $ for the adjoint action of $U$ in $\mathfrak{g}$ (and thus for the action in each orbit $\Ad(G) \cdot H$) is given to $A\in \mathfrak{u}$ by 
\begin{equation*}
\mu (x)=[\tau ix,x]=-i[\tau x,x]\in \mathfrak{u}\qquad x\in \mathfrak{g}.
\end{equation*}
\end{cor}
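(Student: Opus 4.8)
The statement is essentially the conclusion of the computation already displayed just above it, so the plan is to organise that computation into three clean steps: pin down which function plays the role of the moment map, rewrite it using the relation between $\Omega_\tau$ and the real Cartan--Killing form together with $\ad$-invariance, and finally verify that the element it produces already lies in $\mathfrak{u}$, so that the orthogonal projection appearing in the defining identity is trivial.

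First I would recall that, since $U$ is compact and $\Omega_\tau$ is $\Ad(U)$-invariant, the adjoint action of $U$ on $\mathfrak{g}$ is Hamiltonian, and that a moment map $\mu\colon\mathfrak{g}\to\mathfrak{u}^{\ast}\simeq\mathfrak{u}$ is determined, up to the normalisation already fixed above, by the requirement that for each $A\in\mathfrak{u}$ the function $x\mapsto\langle\mu(x),A\rangle_{\mathfrak{u}}$ be a Hamiltonian function of the fundamental vector field $x\mapsto\ad(A)x$. By the Proposition immediately preceding the statement, $\frac{1}{2} Q(x)=\frac{1}{2}\Omega_\tau(\ad(A)x,x)$ is such a function; since it depends linearly on $A$, the prescription $\langle\mu(x),A\rangle_{\mathfrak{u}}=\frac{1}{2} Q(x)$ defines $\mu(x)\in\mathfrak{u}$ unambiguously, and $\Ad(U)$-equivariance is automatic because for $u\in U$ the complex-linear automorphism $\Ad(u)$ preserves $\mathfrak{u}$ and hence commutes with $\tau$.

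Next I would compute $\frac{1}{2} Q(x)$ with the identities already recorded in the text. From $\Omega_\tau(X,Y)=B_\tau(iX,Y)=-\langle iX,\tau Y\rangle_{\mathbb{R}}$ one gets $\frac{1}{2} Q(x)=-\frac{1}{2}\langle i\,\ad(A)x,\tau x\rangle_{\mathbb{R}}$; then writing $i\,\ad(A)x=[A,ix]$, using the cyclic invariance of $\langle\cdot,\cdot\rangle_{\mathbb{R}}$ to move $A$ out of the bracket, and using that $\tau$ is conjugate-linear, so that $\tau(ix)=-i\tau x$ and $[ix,\tau x]=[\tau ix,x]$, one arrives at $\langle\mu(x),A\rangle_{\mathfrak{u}}=\frac{1}{2}\langle A,[\tau ix,x]\rangle_{\mathbb{R}}$ for all $A\in\mathfrak{u}$. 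Since $\langle\cdot,\cdot\rangle_{\mathbb{R}}$ restricted to $\mathfrak{u}$ is a nonzero multiple of $\langle\cdot,\cdot\rangle_{\mathfrak{u}}$, this identity by itself only shows that $\mu(x)$ equals the $\mathfrak{u}$-orthogonal projection of $[\tau ix,x]=-i[\tau x,x]$; the final step is to note that this element is $\tau$-fixed, since $\tau([\tau ix,x])=[ix,\tau x]=[\tau ix,x]$, hence already lies in $\mathfrak{u}$ and equals its own projection. This yields $\mu(x)=[\tau ix,x]=-i[\tau x,x]\in\mathfrak{u}$. The parenthetical assertion about the orbits is then immediate: each $\Ad(G)\cdot H$ is an $\Ad(U)$-invariant submanifold on which $\Omega_\tau$ restricts to a symplectic form, so the restriction of $\mu$ is a moment map for the induced $U$-action.

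The only point that needs genuine care --- the rest being the invariance bookkeeping already in place --- is the last verification that $[\tau ix,x]$ lands in $\mathfrak{u}$, which is precisely what allows the moment map to be written by the clean formula rather than as a projection of it; one also has to track the signs in the Cartan--Killing manipulations consistently with the conventions fixed above.
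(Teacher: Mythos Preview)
Your proposal is correct and follows essentially the same route as the paper: the paper carries out exactly this computation in the lines immediately preceding the Corollary (which has no separate proof), arriving at $\langle\mu(x),A\rangle_{\mathfrak{u}}=\tfrac{1}{2}\langle A,[\tau ix,x]\rangle_{\mathbb{R}}$, then writing $\mu(x)=\tfrac{1}{2}\bigl([\tau ix,x]+\tau[\tau ix,x]\bigr)=[\tau ix,x]$, which is precisely your ``projection'' step together with the verification that $[\tau ix,x]$ is $\tau$-fixed. Your write-up is in fact a bit more careful than the paper's, since you flag the scalar relating $\langle\cdot,\cdot\rangle_{\mathbb{R}}|_{\mathfrak{u}}$ to $\langle\cdot,\cdot\rangle_{\mathfrak{u}}$ and make the $\tau$-invariance check explicit.
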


From this expression for $\mu$ and \cite[Prop. 4]{inftight}, it follows that the orbit $\Ad(U) \cdot x$ is isotropic for $\Omega_\tau$ if and only if $[ \tau x,x]=0$, since $\mathfrak{u}$ is semisimple. Put another way, $\Ad(U) \cdot x$ is isotropic if and only if $x$ commutes with $\tau x$.

\begin{example}
Let $\mathfrak{g}=\mathfrak{sl}(n,\mathbb{C})$, we have 
\begin{equation*}
\tau x= -x^{*} = \overline{x}^{T} ,
\end{equation*}
therefore the isotropic orbits are the orbits of normal transformations.
\end{example}

One case where the adjoint orbit $\Ad(U) \cdot H$ is isotropic is when $H \in \mathfrak{s}=i \mathfrak{u}$. In this case, $\Ad(U) \cdot H = \mathbb{F}_H$ is a flag manifold of $\mathfrak{g}$. Moreover, we have $\dim \left( \Ad(G) \cdot H \right) = 2 \dim  \mathbb{F}_H$, hence $\mathbb{F}_H$ is Lagrangian submanifold of $\Ad (G) \cdot H$ with respect to $\Omega_\tau$.  Then

\begin{teo}\label{Teo: adjact}
The only isotropic $\Ad(U)$-orbit in $\Ad(G) \cdot H$ is the flag manifold $\mathbb{F}_H$, since it is the only orbit with dimension less or equal to $ \frac{1}{2} \dim \left( \Ad (G) \cdot H \right)$.
\end{teo}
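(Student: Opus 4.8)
The plan is to combine two facts already established: first, that $\mathbb{F}_H = \Ad(U)\cdot H$ is an isotropic (indeed Lagrangian) $\Ad(U)$-orbit inside $\Ad(G)\cdot H$ when $H\in \mathrm{cl}(\mathfrak{a}^+)\subset \mathfrak{s}$, which was checked in the paragraph preceding the statement using the moment-map criterion $[\tau H, H]=0$ (since $\tau H = -H$) together with the dimension count $\dim \Ad(G)\cdot H = 2\dim \mathbb{F}_H$; and second, the general principle (from \cite[Prop. 4]{inftight}, invoked just above) that an $\Ad(U)$-orbit is isotropic for $\Omega_\tau$ precisely when the moment map $\mu(x) = -i[\tau x, x]$ vanishes on it. So the real content to prove is the uniqueness clause: no \emph{other} $\Ad(U)$-orbit in $\Ad(G)\cdot H$ has dimension $\le \tfrac12 \dim(\Ad(G)\cdot H) = \dim\mathbb{F}_H$, equivalently (by the moment-map criterion) that the only points $x$ of $\Ad(G)\cdot H$ with $[\tau x, x]=0$ are those on $\mathbb{F}_H$ itself.

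First I would set up the following dichotomy for a point $x = \Ad(g)\cdot H \in \Ad(G)\cdot H$. Using the deformation/fibration picture of Section \ref{deforb}, write $x = \Ad(k)(H + N)$ with $k\in K=U$ and $N \in \Ad(k)\,\mathfrak{n}_H^+$; by $U$-equivariance of $\mu$ and of the whole picture it suffices to treat $x = H + N$ with $N \in \mathfrak{n}_H^+$, and to show $[\tau x, x]=0$ forces $N=0$. Compute $\tau x = \tau H + \tau N = -H + \tau N$, so
\begin{equation*}
[\tau x, x] = [-H + \tau N,\ H + N] = [-H, N] + [\tau N, H] + [\tau N, N] = -[H,N] - [H,\tau N] + [\tau N, N].
\end{equation*}
Since $N \in \mathfrak{n}_H^+ = \sum_{\alpha(H)>0}\mathfrak{g}_\alpha$, we have $\tau N \in \mathfrak{n}_H^- = \sum_{\alpha(H)>0}\mathfrak{g}_{-\alpha}$, and $[\tau N, N] \in \mathfrak{z}_H$ (the $0$-eigenspace), while $-[H,N]$ lies in $\mathfrak{n}_H^+$ and $-[H,\tau N]$ in $\mathfrak{n}_H^-$. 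By the directness of $\mathfrak{g} = \mathfrak{n}_H^- \oplus \mathfrak{z}_H \oplus \mathfrak{n}_H^+$, vanishing of $[\tau x,x]$ forces all three components to vanish; in particular $[H,N]=0$, which combined with $N\in\mathfrak{n}_H^+$ and $\alpha(H)>0$ on each root space appearing in $N$ forces $N=0$. Hence $x = H \in \mathbb{F}_H$. Thus the isotropic points of $\Ad(G)\cdot H$ are exactly $\mathbb{F}_H$, and since a non-isotropic $\Ad(U)$-orbit must have dimension strictly greater than $\dim\mathbb{F}_H$ (any $\Ad(U)$-invariant symplectic form restricted to an orbit through a non-isotropic point is nonzero, and the orbit, being a homogeneous space on which $\mu$ is nonconstant, carries a nontrivial closed $2$-form, forcing even dimension $> \dim\mathbb{F}_H$... actually more directly: an isotropic submanifold has dimension at most half, so any orbit of dimension $\le \dim\mathbb{F}_H$ on which $\Omega_\tau$ is non-zero would already contradict being exactly half-dimensional only if strictly smaller — so I would instead simply argue that any orbit $\mathcal{O}$ with $\dim\mathcal{O}\le \dim\mathbb{F}_H$ is isotropic, because $\dim\mathcal{O}\le\tfrac12\dim(\Ad(G)\cdot H)$ and an orbit of at-most-half dimension is automatically isotropic once we know $\mu$ is constant on it — no, that is not automatic either). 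Let me instead phrase the final step as: the moment map criterion gives isotropy $\iff [\tau x,x]=0$; the computation above shows this holds only on $\mathbb{F}_H$; and $\mathbb{F}_H$ has dimension exactly $\tfrac12\dim(\Ad(G)\cdot H)$, so it is Lagrangian, while every other orbit is non-isotropic and hence (having a nonvanishing restriction of $\Omega_\tau$) cannot be contained in any isotropic — in particular cannot itself be isotropic — which is the claim; the dimension remark in the statement then follows since an isotropic orbit would have to have dimension $\le \dim\mathbb{F}_H$, yet we have shown no such orbit other than $\mathbb{F}_H$ is isotropic.

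The main obstacle I anticipate is the bookkeeping in the orbit-parametrization step: to reduce a general point of $\Ad(G)\cdot H$ to the normal form $H+N$ with $N\in\mathfrak{n}_H^+$ one must invoke precisely the fibration $\Ad(G)\cdot H = \bigcup_{k\in K}\Ad(k)(H+\mathfrak{n}_H^+)$ from the introduction together with $U$-equivariance of $\mu$ (so that checking isotropy on the fiber over the basepoint suffices); and one must be careful that $\tau$ interacts correctly with this decomposition, namely that $\tau$ sends $\mathfrak{n}_H^+$ to $\mathfrak{n}_H^-$ — which holds because $\tau = \theta$ here (the Cartan involution for $\mathfrak{g}=\mathfrak{u}\oplus i\mathfrak{u}$) and $\theta$ negates $\mathfrak{a}$, hence exchanges $\mathfrak{g}_\alpha$ and $\mathfrak{g}_{-\alpha}$. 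Once those identifications are in place the root-space direct-sum argument is routine.
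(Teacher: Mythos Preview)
Your moment-map computation contains a real gap: the assertion that $[\tau N,N]\in\mathfrak{z}_H$ is false in general. Writing $N=\sum_{\alpha(H)>0}N_\alpha$ one has $[\tau N,N]=\sum_{\alpha,\beta}[\tau N_\alpha,N_\beta]$ with $[\tau N_\alpha,N_\beta]\in\mathfrak{g}_{\beta-\alpha}$, and $\beta-\alpha$ may well be a nonzero root; already in $\mathfrak{sl}(3,\mathbb{C})$, taking $N$ with components in $\mathfrak{g}_{\alpha_1}$ and $\mathfrak{g}_{\alpha_1+\alpha_2}$ produces a nonzero piece of $[\tau N,N]$ in $\mathfrak{g}_{\alpha_2}\subset\mathfrak{n}_H^+$. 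So the three terms do \emph{not} land separately in $\mathfrak{n}_H^+$, $\mathfrak{n}_H^-$, $\mathfrak{z}_H$, and you cannot read off $[H,N]=0$. The approach is salvageable: project instead onto the \emph{highest} $\ad(H)$-eigenvalue $\lambda_1>0$ occurring in $N$. There $-[H,N]$ contributes $-\lambda_1 N_{\lambda_1}\neq0$, the term $-[H,\tau N]$ lives in negative eigenspaces, and $[\tau N_\mu,N_\nu]$ can reach eigenvalue $\lambda_1$ only if $\nu=\mu+\lambda_1>\lambda_1$, impossible since $\nu\le\lambda_1$. Hence $[\tau x,x]\neq0$ whenever $N\neq0$, and your conclusion survives.

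Even repaired, your route differs from the paper's and, as your closing paragraph already senses, proves a slightly weaker statement. The paper does not use the moment-map criterion at all for the uniqueness; it argues directly on dimensions by comparing stabilizers: for $0\neq X\in\mathfrak{n}_H^+$ one has $U_{H+X}\subset U_H$, and the containment is strict because the torus generated by $\exp(itH)$ lies in $U_H$ but has no fixed points in $\mathfrak{n}_H^+$ (all eigenvalues of $\ad(H)$ there are strictly positive), hence does not fix $X$. Thus $\dim\Ad(U)\cdot(H+X)>\dim\mathbb{F}_H=\tfrac12\dim(\Ad(G)\cdot H)$, which is precisely the ``since'' clause of the theorem, and non-isotropy follows for free. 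Your argument yields non-isotropy of the other orbits directly but not the dimension inequality; if you keep the moment-map approach you should either add the stabilizer comparison separately, or weaken what you claim to prove.
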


\begin{proof}
It should be proved that if $0 \neq X \in \mathfrak{n}_{H}^{+}$, then the isotropy subgroup $U_{H+X}$ on $H+X$ has a strictly smaller dimension than the dimension of  $U_H$ on $H$, as this shows that  $$\dim \Ad (U) (H+X) > \dim \mathbb{F}_H = \frac{1}{2} \dim \left( \Ad (G) \cdot H \right).$$

For this it is observed that if $$\Ad (u) (H+X) = \Ad (u) \cdot H + \Ad (u) \cdot X = H+X$$ then $\Ad (u) \cdot H =H$ and $\Ad (u) \cdot X =X$. The first equality means that  $U_{H+X} \subset U_H$. Take the torus $T_{H} = \mathrm{cl} \{ e^{itH}: \ t \in \mathbb{R} \}$ which has dimension greater than $0$, then $T_H \subset U_H$ but $\Ad (v) \cdot X \neq X$ for some $v \in T_H$ since $\Ad (T_H)$ has no fixed points in $\mathfrak{n}_{H}^{+}$, because the eigenvalues of $\ad (H)$ in $\mathfrak{n}_{H}^{+}$ are strictly positive.

This shows that $\ad (iH)$ is not in the isotropy algebra $H+X$ and therefore $\dim U_{H+X} < \dim U_H $.
\end{proof}

\section*{Appendix} \label{apendice}

\addcontentsline{toc}{section}{Appendix}


\subsection*{Representations and symplectic geometry}

Let $M\subset W$ be a immersed submanifold of the vector space $W$ (real, that is, $W=\mathbb{R}^{N}$). The cotangent bundle $\pi :T^{\ast
}M\rightarrow M$ is provided with the canonical symplectic form $\omega$. Given a function $f:TM\rightarrow \mathbb{R}$ denote by $X_{f}$ the corresponding Hamiltonian field, such that $df\left( \cdot \right) =\omega \left( X_{f},\cdot \right)$. If $\alpha \in W^{\ast }$, the height function $f_{\alpha }:M\rightarrow \mathbb{R}$ is given by 
\begin{equation*}
f_{\alpha }\left( x\right) =\alpha \left( x\right)
\end{equation*}
and also denote by $f_{\alpha }$ its lifting $f_{\alpha }\circ \pi $ which is constant on the fibers of $\pi$. Denote by $X_{\alpha }$ the Hamiltonian field of this function. Since $f_{\alpha }$ is constant in the fibers, the field $X_{\alpha}$ is vertical and the restriction to the fiber $T_{x}^{\ast}M$   is constant in the direction of the vector $\left( df_{\alpha }\right) _{x}\in T_{x}^{\ast }M$. Furthermore, if $\alpha ,\beta \in W^{\ast }$, the vector fields $X_{\alpha }$ and $X_{\beta }$ commutes.  In terms of the action of Lie groups and algebras, the commutativity $\left[ X_{\alpha },X_{\beta }\right] =0$ means that the application $\alpha \mapsto X_{\alpha }$ is an infinitesimal action of $W^{\ast}$, seen as an abelian Lie algebra. This infinitesimal action can be extended to an action of $W^{\ast }$ (seen as an abelian Lie group because the fields $X_{\alpha }$ are complete).

Now, let $R:L\rightarrow \mathrm{Gl}\left( W\right)$ be a representation of the Lie group $L$ on $W$ and take a $L$-orbit  given by $M=\{R\left( g\right) x:g\in L\}$. The action of $G$ on $M$ lifts to an action in the cotangent bundle $T^{\ast}M$ for linearity. If $\mathfrak{l}$ is the Lie algebra of $L$, then the infinitesimal action of $\mathfrak{l}$ in the orbit  $M$ is given by the fields $y\in M\mapsto R\left( X\right) y$, where $X\in \mathfrak{l}$ and $R\left( X\right) $ also denotes the infinitesimal representation associated to $R$. The infinitesimal action of the lifting in $T^{\ast}M$ is given by $X\in \mathfrak{l}\mapsto H_{X}$, where $H_{X}$ is the Hamiltonian field on $T^{\ast }M$, such that the Hamiltonian function is $F_{X}:T^{\ast }M\rightarrow \mathbb{R}$ given by 
\begin{equation*}
F_{X}\left( \alpha \right) =\alpha \left( R\left( X\right) y\right) \qquad
\alpha \in T_{y}^{\ast }M.
\end{equation*}
The actions of $L$ and $W^{\ast}$ in $T^{\ast }M$ are going to define an action of the semi-direct product $L\times W^{\ast}$, defined by the dual representation $R^{\ast}$. The action of $ L\times W^{\ast}$ on $T^{\ast }M$ is Hamiltonian in the sense that the corresponding infinitesimal action of  $\mathfrak{l}\times W^{\ast}$ is formed by Hamiltonian fields.  When we have a Hamiltonian action we can define its moment application (See \cite[Section 14.4]{smgrlie}). In this case, an application $$m:T^{\ast }M \rightarrow \left(  \mathfrak{l}\times W^{\ast }\right) ^{\ast }=\mathfrak{l}^{\ast }\times W .$$

In the action on $T^{\ast }M$, the field induced by $X\in \mathfrak{l}$ is the Hamiltonian field $H_{X}$ of the function $F_{X}\left( \alpha \right) =\alpha\left( R\left( X\right) y\right)$, while the field induced by $\alpha \in W^{\ast }$ is the Hamiltonian field of the function $f_{\alpha }$. So if  $\gamma \in T_{y}^{\ast } M$, $y\in M\subset W$ then for $ \qquad X\in \mathfrak{l}$ and $ \alpha \in W^{\ast }$
$$m\left( \gamma \right) \left( X\right) =\gamma \left( R\left( X\right)
y\right) \quad \text{and} \quad m\left( \gamma \right) \left( \alpha \right) =\alpha \left( y\right)$$

The first term coincides with the moment $\mu :W\otimes W^{\ast}\rightarrow  \mathfrak{l}^{\ast}$ of the representation $R$, that is, $m\left( \gamma \right) =\mu \left( y\otimes \overline{\gamma }\right)$ such that the restriction of $\overline{ \gamma }\in W^{\ast}$ to the tangent space $T_{y} M$ is equal to $\gamma$. The second term shows that the linear functional $m\left( \gamma \right)$ restricted to $W^{\ast}$ is exactly $y$. Consequently,

\begin{pps}
\label{prop: ap mom} The moment application $m:T^{\ast }M \rightarrow \mathfrak{l}^{\ast}\times W=\mathfrak{l}^{\ast }\oplus W$ is given by 
\begin{equation*}
m\left( \gamma _{y}\right) =\mu \left( y\otimes \overline{\gamma }\right) +y,
\end{equation*}
where $\gamma _{y}\in T_{y}^{\ast }M$ and $\overline{\gamma }\in W^{\ast }$, such that its restriction to $T_{y}M=\{R\left( X\right) y:X\in \mathfrak{l}^{\ast }\}$ is equal to $\gamma $.
\end{pps}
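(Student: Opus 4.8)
The plan is to unwind the standard characterisation of the moment map of a Hamiltonian action and to evaluate it separately on the two factors of $\mathfrak{l}\times W^{\ast}$. Recall that for a Hamiltonian action of a Lie group with Lie algebra $\mathfrak{h}$ on a symplectic manifold $(P,\omega)$, the moment map $m:P\to\mathfrak{h}^{\ast}$ is characterised by $m(p)(\xi)=F_{\xi}(p)$, where $F_{\xi}$ is the Hamiltonian function of the vector field on $P$ induced by $\xi\in\mathfrak{h}$. In our situation $P=T^{\ast}M$, $\mathfrak{h}=\mathfrak{l}\times W^{\ast}$, and $\mathfrak{h}^{\ast}=\mathfrak{l}^{\ast}\times W$ because $W$ is finite dimensional; so I would compute $m(\gamma_{y})$ by testing it against an arbitrary $X\in\mathfrak{l}$ and against an arbitrary $\alpha\in W^{\ast}$.

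For the $\mathfrak{l}$-direction: by the description of the lifted action recalled just before the statement, the field on $T^{\ast}M$ induced by $X\in\mathfrak{l}$ is the Hamiltonian field $H_{X}$ of $F_{X}(\beta)=\beta(R(X)y)$ for $\beta\in T_{y}^{\ast}M$, so $m(\gamma_{y})(X)=\gamma_{y}(R(X)y)$. Since $R(X)y\in T_{y}M$, any extension $\overline{\gamma}\in W^{\ast}$ of $\gamma_{y}$ (meaning $\overline{\gamma}|_{T_{y}M}=\gamma_{y}$) gives $\gamma_{y}(R(X)y)=\overline{\gamma}(R(X)y)$; in particular this value is independent of the chosen extension. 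Comparing with the moment map $\mu:W\otimes W^{\ast}\to\mathfrak{l}^{\ast}$ of the representation $R$, defined by $\mu(y\otimes\overline{\gamma})(X)=\overline{\gamma}(R(X)y)$, one reads off that the $\mathfrak{l}^{\ast}$-component of $m(\gamma_{y})$ equals $\mu(y\otimes\overline{\gamma})$.

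For the $W^{\ast}$-direction: by construction the field on $T^{\ast}M$ induced by $\alpha\in W^{\ast}$ is the Hamiltonian field of the lift $f_{\alpha}\circ\pi$ of the height function $f_{\alpha}(x)=\alpha(x)$, and this lift is constant equal to $\alpha(y)$ on the fiber $T_{y}^{\ast}M$; hence $m(\gamma_{y})(\alpha)=\alpha(y)$. Thus the restriction of the functional $m(\gamma_{y})$ to $W^{\ast}$ is evaluation at $y$, which under the canonical identification $(W^{\ast})^{\ast}=W$ is precisely the vector $y$. Assembling the two components under $\mathfrak{l}^{\ast}\times W=\mathfrak{l}^{\ast}\oplus W$ then yields $m(\gamma_{y})=\mu(y\otimes\overline{\gamma})+y$, which is the claimed formula.

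The computation itself is short, so there is no genuine obstacle at this stage: the substantive work is the content of the paragraphs preceding the statement, where one must check that the assignments $X\mapsto H_{X}$ and $\alpha\mapsto X_{\alpha}$ really do assemble into a single Hamiltonian action of the semidirect product $L\times W^{\ast}$ defined by the dual representation $R^{\ast}$, with the functions $F_{X}$ and $f_{\alpha}$ furnishing its moment map (this uses the commutativity $[X_{\alpha},X_{\beta}]=0$, the $L$-equivariance of $\alpha\mapsto X_{\alpha}$, and the compatibility of $F_{X}$ with $R^{\ast}$). The only point that needs attention in the proof proper is to keep straight the two pairings and to use the identification $\mathfrak{h}^{\ast}=\mathfrak{l}^{\ast}\times W$ consistently; granting the setup, the formula for $m$ follows as above.
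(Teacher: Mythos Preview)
Your proposal is correct and follows essentially the same approach as the paper: the paper also evaluates the moment map on $X\in\mathfrak{l}$ and on $\alpha\in W^{\ast}$ separately, obtaining $m(\gamma)(X)=\gamma(R(X)y)=\mu(y\otimes\overline{\gamma})(X)$ and $m(\gamma)(\alpha)=\alpha(y)$, and then reads off the two components. Your write-up is in fact slightly more careful, in that you explicitly note the independence of the choice of extension $\overline{\gamma}$ and spell out the identification $(W^{\ast})^{\ast}=W$.
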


\subsection*{Skew-symmetric bilinear form}

Let $V$ be a vector space (over $\mathbb{R}$ and $\dim V<\infty$) and $\omega $ a skew-symmetric bilinear form in $V$. The radical $R^{\omega}$ of $\omega $ is given by 
\begin{equation*}
R^{\omega }=\{v\in V:\forall w\in V,~\omega \left( v,w\right) =0\}.
\end{equation*}
By definition, $\omega$ is non-degenerate if and only if $R^{\omega }=\{0\}$. In this case $\dim V$ is even and $\omega$ is called a linear symplectic form.

\begin{pps}
\label{propFormaDegenerada} $\omega$ is non-degenerate if and only if there is a maximal isotropic subspace $W$, with $2\dim W=\dim V$.
\end{pps}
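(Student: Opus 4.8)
The plan is to prove, for an arbitrary skew-symmetric bilinear form $\omega$ on $V$, the dimension identity $2\dim W = \dim V + \dim R^{\omega}$ valid for every \emph{maximal} isotropic subspace $W$ of $V$, and then to read off the stated equivalence: a maximal isotropic $W$ with $2\dim W = \dim V$ exists if and only if $\dim R^{\omega}=0$, i.e.\ if and only if $\omega$ is non-degenerate.

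First I would settle the non-degenerate case. Assume $R^{\omega}=\{0\}$ and pick an isotropic subspace $W$ of maximal dimension. For any subspace $U\subseteq V$ the linear map $V\to U^{\ast}$ sending $v$ to the restriction $\omega(v,\cdot)|_{U}$ is surjective (by non-degeneracy $v\mapsto\omega(v,\cdot)$ already exhausts $V^{\ast}$, and restriction $V^{\ast}\to U^{\ast}$ is onto) and has kernel $U^{\perp}:=\{v\in V:\omega(v,u)=0\ \forall u\in U\}$; hence $\dim U+\dim U^{\perp}=\dim V$. Since $W$ is isotropic we have $W\subseteq W^{\perp}$, and were this inclusion strict, any $v\in W^{\perp}\setminus W$ would enlarge $W$ to the isotropic subspace $W\oplus\mathbb{R}v$, contradicting maximality. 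Therefore $W=W^{\perp}$ and $2\dim W=\dim V$; in particular, in the non-degenerate case every maximal isotropic subspace has dimension exactly $\tfrac12\dim V$ (and $\dim V$ is even).

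For general $\omega$, I would reduce to the previous case through the quotient $\overline V=V/R^{\omega}$. Since vectors of $R^{\omega}$ pair trivially with all of $V$, the form $\omega$ descends to a well-defined form $\overline\omega$ on $\overline V$, and $\overline\omega$ is non-degenerate by the very definition of the radical. Any maximal isotropic $W\subseteq V$ must contain $R^{\omega}$: indeed $W+R^{\omega}$ is again isotropic, so maximality forces $R^{\omega}\subseteq W$. Consequently $\overline W:=W/R^{\omega}$ is an isotropic subspace of $\overline V$, and it is maximal there, since an isotropic subspace of $\overline V$ strictly containing $\overline W$ would pull back along $V\to\overline V$ to an isotropic subspace of $V$ strictly containing $W$. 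By the non-degenerate case $\dim\overline W=\tfrac12\dim\overline V$, that is $\dim W-\dim R^{\omega}=\tfrac12(\dim V-\dim R^{\omega})$, which rearranges to $2\dim W=\dim V+\dim R^{\omega}$. The equivalence now follows immediately, a witnessing subspace in the non-degenerate direction being the $W$ constructed in the second step.

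I expect the only genuine difficulty to be organizational: keeping straight the correspondence between isotropic subspaces of $V$ and of $V/R^{\omega}$ (in particular that maximality is preserved in both directions) and checking that $\overline\omega$ is well-defined and non-degenerate. Everything else reduces to the elementary rank--nullity computation of the second paragraph, and no ingredient beyond the definition of the radical is needed.
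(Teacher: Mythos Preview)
Your proof is correct and follows essentially the same route as the paper: both pass to the quotient $V/R^{\omega}$, observe that $\overline{\omega}$ is non-degenerate there, check that a maximal isotropic $W$ contains $R^{\omega}$ and projects to a maximal isotropic $\overline{W}$, and then read off the identity $2\dim W=\dim V+\dim R^{\omega}$. The only real difference is that you supply an explicit argument ($W=W^{\perp}$ via rank--nullity) for the non-degenerate case, whereas the paper simply invokes this as well known; your version is thus slightly more self-contained but otherwise matches the paper's proof.
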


\begin{proof}
As it is well known, if $\omega$ is a symplectic form then dimension of the maximal isotropic subspaces (Lagrangian subspaces) is half the dimension of $V$. Furthermore, every isotropic subspace is contained in some Lagrangian subspace. For the reciprocal, take the quotient space $V/R^{\omega}$ and define the form $\overline{\omega}$ in $V/R^{\omega}$ by $\overline{\omega }\left(  \overline{v},\overline{w}\right) =\omega \left( v,w\right)$ which is a skew-symmetric bilinear form in $V/R^{\omega}$. The radical $R^{\overline{\omega }}$ of $\overline{\omega}$ vanishes, because if $\overline{v}\in R^{\overline{\omega }}$ then $\omega \left( v,w\right) =\overline{\omega }\left( \overline{v},\overline{w}\right)=0$, for all $w\in V$. Hence if $\omega$ is not identically null, then $\overline{\omega}$ is a symplectic form.

Now let $W\subset V$ be an isotropic subspace. So, the projection $\overline{W}\subset V/R^{\omega}$ is isotropic subspace for $\overline{\omega}$. If $ W $ is maximal isotropic then $R^{\omega }\subset W$ and as follows from the definition, $\overline{W}$ is  maximal isotropic and therefore $\dim V/R^{\omega }=2\dim \overline{W}$. In this case $\dim W=\dim \overline{W}+\dim R^{\omega}$, then
\begin{eqnarray*}
2\dim W &=&2\dim \overline{W}+2\dim R^{\omega }=\dim V-\dim R^{\omega }+2\dim R^{\omega } \\
&=&\dim V+\dim R^{\omega }.
\end{eqnarray*}
Hence, if $\omega$ is degenerate then $\dim R^{\omega }>0$ and therefore $2\dim W>\dim V$, concluding the demonstration.
\end{proof}


\end{document}